\documentclass[A4]{amsart}
\usepackage[square,sort,comma,numbers]{natbib}
\usepackage{amsmath, amssymb, amstext, amsfonts, textcomp, amsxtra, amsbsy, amsgen, amsopn, amscd, mathrsfs, amsthm, latexsym, array}
\usepackage[textwidth=16cm,textheight=22cm,centering]{geometry}

\usepackage{enumerate}

\usepackage{bbm, dsfont}
\usepackage{color}
\usepackage{graphicx}

\usepackage{xcolor}
\definecolor{cite}{rgb}{0.00,0.60,1.00}
\definecolor{url}{rgb}{1.00,0.10,0.80}
\definecolor{link}{rgb}{0.00,0.00,1.00}
\usepackage[colorlinks,linkcolor=link,urlcolor=url,citecolor=cite,pagebackref,breaklinks]{hyperref}

\def\leq{\leqslant}

\def\geq{\geqslant}

\allowdisplaybreaks

\usepackage[all]{xy}

\newtheorem{theorem}             {Theorem}  [section]
\newtheorem{definition} [theorem] {Definition}
\newtheorem{lemma}      [theorem]{Lemma}
\newtheorem{corollary}  [theorem]{Corollary}
\newtheorem{proposition}[theorem]{Proposition}

\numberwithin{equation}{section} 

\theoremstyle{remark}
\newtheorem{remark}{\bf Remark}

\newcommand{\Cont}{\mathcal{C}}

\newcommand{\Sch}{\mathcal{S}}
\newcommand{\sgn}{{\rm sgn}}
\newcommand{\intL}{{\rm L}}

\newcommand{\Nr}{{\rm Nr}}
\newcommand{\Tr}{{\rm Tr}}

\newcommand{\FBessel}[3]{\mathfrak{j}(#1; #2, #3)}

\newcommand{\gp}[1]{\mathbf{#1}}
\newcommand{\GL}{{\rm GL}}
\newcommand{\PGL}{{\rm PGL}}
\newcommand{\SL}{{\rm SL}}
\newcommand{\SO}{{\rm SO}}
\newcommand{\Sp}{\mathrm{Sp}}
\newcommand{\SU}{{\rm SU}}

\newcommand{\id}{\mathbbm{1}}

\newcommand{\ag}[1]{\mathbb{#1}}

\newcommand{\Z}{\mathbb{Z}}

\newcommand{\Mat}{{\rm M}}



\newcommand{\Q}{\mathbb{Q}}
\newcommand{\R}{\mathbb{R}}
\newcommand{\C}{\mathbb{C}}
\newcommand{\E}{\mathbb{E}}
\newcommand{\F}{\mathbb{F}}

\newcommand{\A}{\mathbb{A}}

\newcommand{\vo}{\mathfrak{o}}

\newcommand{\vp}{\mathfrak{p}}

\newcommand{\idlN}{\mathfrak{N}}

\newcommand{\Dis}{{\rm D}}
\newcommand{\Dif}{\mathfrak{D}}


\newcommand{\norm}[1][\cdot]{\lvert #1 \rvert}
\newcommand{\extnorm}[1]{\left\lvert #1 \right\rvert}

\newcommand{\Pairing}[2]{\langle #1, #2 \rangle}
\newcommand{\extPairing}[2]{\left\langle #1, #2 \right\rangle}


\newcommand{\OFour}{\mathfrak{F}}


\newcommand{\rpR}{{\rm R}}
\newcommand{\Bas}{\mathcal{B}}
\newcommand{\Hom}{{\rm Hom}}
\newcommand{\Ind}{{\rm Ind}}
\newcommand{\cInd}{{\rm ind}}

\newcommand{\Intw}{\mathcal{M}}

\newcommand{\Whi}{\mathcal{W}}
\newcommand{\Kir}{\mathcal{K}}
\newcommand{\Cond}{\mathbf{C}}
\newcommand{\cond}{\mathfrak{a}}

\newcommand{\fin}{{\rm fin}}
\newcommand{\eis}{{\rm E}}
\newcommand{\Reis}{\mathcal{E}}

\newcommand{\reg}{{\rm reg}}

\newcommand{\Zeta}{\mathrm{Z}}
\newcommand{\Mt}{\mathrm{M}}
\newcommand{\Wt}{\mathrm{Wt}}
\newcommand{\Dt}{\mathrm{D}}


\newcommand{\Vol}{{\rm Vol}}
\makeatletter

\newcommand{\Rmnum}[1]{\expandafter\@slowromancap\romannumeral #1@}
\makeatother


\newcommand{\ud}{\mathrm{d}}
\newcommand{\ue}{\mathrm{e}}

\newcommand{\tr}{\mathrm{tr}}

\newcommand{\rank}{\mathrm{rank}}
\newcommand{\Swan}{\mathrm{Swan}}

\newcommand{\bx}{\mathbf{x}}
\newcommand{\by}{\mathbf{y}}

\newcommand{\bE}{\mathbf{E}}
\newcommand{\bF}{\mathbf{F}}

\newcommand{\bR}{\mathbf{R}}

\newcommand{\cC}{\mathcal{C}}

\newcommand{\cF}{\mathcal{F}}

\newcommand{\cH}{\mathcal{H}}

\newcommand{\sL}{\mathscr{L}}

\newcommand{\fa}{\mathfrak{a}}

\newcommand{\fc}{\mathfrak{c}}

\newcommand{\fn}{\mathfrak{n}}

\newcommand{\ft}{\mathfrak{t}}

\newcommand{\fG}{\mathfrak{G}}

\DeclareMathOperator*{\ord}{ord}
\DeclareMathOperator*{\Res}{Res}

\DeclareMathOperator{\bFrob}{Frob}

\DeclareFontFamily{U}{mathx}{\hyphenchar\font45}
\DeclareFontShape{U}{mathx}{m}{n}{
      <5> <6> <7> <8> <9> <10>
      <10.95> <12> <14.4> <17.28> <20.74> <24.88>
      mathx10
      }{}
\DeclareSymbolFont{mathx}{U}{mathx}{m}{n}
\DeclareMathAccent{\widecheck}{\mathalpha}{mathx}{"71}

\def\leq{\leqslant}

\def\geq{\geqslant}

\title{A uniform Weyl bound for $L$-functions of Hilbert modular forms}

\author{Han Wu}
\address{School of Mathematical Sciences, University of Scinece and Technology of China, 230026 Hefei, P. R. China}
\email{wuhan1121@yahoo.com}

\author{Ping Xi}
\address{School of Mathematics and Statistics, Xi'an Jiaotong University, Xi'an 710049, P. R. China}
\email{ping.xi@xjtu.edu.cn}

\date{\today}

\begin{document}

\subjclass[2020]{11F41, 11F70, 11R42, 11T24}

\keywords{$L$-functions, subconvexity, Hilbert modular forms, Motohashi's formula, hypergeometric sums}

\begin{abstract}
We establish a Weyl-type subconvexity of $L(\tfrac{1}{2},f)$ for spherical Hilbert newforms $f$ with level ideal $\idlN^2$, in which $\idlN$ is required to be cube-free, and at any prime ideal $\vp$ with $\vp^2 \mid \idlN$ the local representation generated by $f$ is not supercuspidal. The proof exploits a distributional version of Motohashi's formula over number fields developed by the first author, as well as Katz's work on hypergeometric sums over finite fields in the language of $\ell$-adic cohomology.
\end{abstract}

	\maketitle

\tableofcontents

\section{Introduction and Main Results} \label{sec:introduction-results}

\subsection{Background}
Given a number field $\bF$ with adelic ring $\A$, it is fundamental to understand the growth of the $L$-function $L(\tfrac{1}{2},\pi)$ as the automorphic representation $\pi$ of $\GL_n(\A)$ varies in a suitable family. The Phragm\'en--Lindel\"of principle gives the convex bound $L(\tfrac{1}{2},\pi)\ll\Cond(\pi)^{1/4+\epsilon}$ for any $\epsilon>0,$ where $\Cond(\pi)\in\bR_{\geqslant1}$ denotes the analytic conductor of $\pi,$ and the implied constant depends on $\epsilon,n$ and $\bF$. The Lindel\"of hypothesis for $L(s,\pi)$ asserts that $1/4$ in the above exponent can be removed. Motivated by many applications, we are interested in reducing the exponent $1/4$, i.e., proving a subconvex bound for $L(\tfrac{1}{2},\pi)$. The quality of subconvex bounds for $L$-functions measures the distance between the current mathematical technology and the Lindel\"of hypothesis. The first subconvexity is due to Weyl \cite{Wyl21}, Hardy--Littlewood \cite{Li22} and Landau \cite{La24} that the Riemann zeta function satisfies
\begin{equation} \label{RZWeylBd}
	\zeta(\tfrac{1}{2}+it) \ll_{\epsilon} (1+\norm[t])^{1/6+\epsilon}.
\end{equation}
For other families of $L$-functions, there are quite limited instances with satisfactory subconvex bounds established. On the other hand, in some favorable cases of $\pi$, one is even able to prove the Weyl-type bound
\begin{equation} \label{ALWeylBd}
	L(\tfrac{1}{2}, \pi) \ll_{\epsilon, n,\bF} \Cond(\pi)^{1/6+\epsilon}.
\end{equation}
Note that the analytic conductor is decomposed as a product indexed by the places $v$ of $\bF$:
\begin{align*}
\Cond(\pi)=\prod_v \Cond(\pi_v)
\end{align*}
for $\pi = \otimes_v' \pi_v$.
We call \eqref{ALWeylBd} a \emph{uniform Weyl bound} to emphasize its uniformity in all places $v$\footnote{We do not include the uniformity with respect to the discriminant of $\bF$, the dependence on which is allowed to be polynomial.}. The exponent $1/6$ turns out to be a natural barrier very difficult to reach or break. Besides the above-mentioned bound for Riemann zeta functions, the so far published uniform Weyl bound are:
\begin{itemize} 
	\item for Dirichlet $L$-functions $L(\tfrac{1}{2}+it,\chi)$: Heath-Brown \cite{HB78} requires the conductor of $\chi$ to have very nice factorization, and the general situation was successfully settled in the recent work by Petrow and Young \cite{PY19_CF, PY19_All}, non-trivially extending the method of Conrey and Iwaniec \cite{CI00} and generalizing the work of Young \cite{Yo17} for quadratic character $\chi$;
	\item for Dedekind $L$-functions $L(\tfrac{1}{2}+it,\chi)$: Soehne \cite{Soe97}, following the method of Heath-Brown, also requires the conductor of $\chi$ to have very nice factorization.
\end{itemize}	
	There are also some other non-uniform Weyl bounds, including
\begin{itemize}
	\item Good \cite{Go82} and Meurman \cite{Me90} for $\pi = \pi_0 \otimes \norm_{\A}^{it}$ with $\pi_0$ corresponding to a fixed holomorphic/Maass form for $\SL_2(\Z)$ in the $t$-aspect, and Jutila--Motohashi \citep{JM05} for varying $\pi_0$ in the hybrid aspect;
	\item Lau--Liu--Ye \cite{LLY06} for twisted Rankin-Selberg products $\pi = (\pi_1 \times \pi_2) \otimes \norm_{\A}^{it}$ with fixed $\pi_1$ in the archimedean aspect for $\pi_2$;
	\item Blomer--Jana--Nelson \cite{BJN22+} for triple products $\pi = \pi_1 \times \pi_2 \times \pi_3$ with $\pi_j$ corresponding to modular forms for $\SL_2(\Z)$, $\pi_1$ and $\pi_2$ fixed and in the archimedean aspect for $\pi_3$.
\end{itemize}

\subsection{Main result}
	In this paper, we establish a new instance of uniform Weyl bounds, which can be formulated in the adelic language as follows.
\begin{theorem} \label{Main}
	Let $\bF$ be a totally real number field with adelic ring $\A$. For $\pi$ an automorphic representation of $\PGL_2(\A)$ such that 
\begin{itemize}
	\item at every real place $v$ the local component $\pi_v$ is spherical with respect to $\SO_2(\R),$
	\item at every finite place $\vp$ the conductor exponent satisfies $\cond(\pi_{\vp}) \in \{ 2, 4 \}$ and is not supercuspidal if $\cond(\pi_{\vp}) = 4$,
\end{itemize}	
	the uniform Weyl bound $\eqref{ALWeylBd}$ holds for any $\epsilon > 0.$
\end{theorem}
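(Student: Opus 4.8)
The plan is to deduce \eqref{ALWeylBd} from an upper bound for a cubic moment of central $L$-values, extracting the single value by positivity, and to estimate that moment through the distributional Motohashi formula over $\bF$; the primes dividing $\idlN$ are exactly where hypergeometric sums will enter. Write $\idlN^2$ for the level ideal of $\pi$, so that $\Cond(\pi)\asymp(\Nr\idlN)^2\prod_{v\mid\infty}(1+\norm[t_v])^2$ with $t_v$ the spherical parameter of $\pi_v$, and it suffices to prove
\begin{equation*}
	L(\tfrac12,\pi)^3\;\ll_{\epsilon}\;\Bigl((\Nr\idlN)\prod_{v\mid\infty}(1+\norm[t_v])\Bigr)^{1+\epsilon}.
\end{equation*}
Since the central value $L(\tfrac12,\pi')$ is known to be non-negative for every automorphic representation $\pi'$ of $\PGL_2(\A)$, it is enough to bound the cubic moment
\begin{equation*}
	\cM\;=\;\sum_{\pi'}L(\tfrac12,\pi')^{3}\,h(\pi')\;+\;(\text{Eisenstein contribution}),
\end{equation*}
where $\pi'$ runs over automorphic representations of $\PGL_2(\A)$ whose conductor divides $\idlN^2$, the weight $h\geq0$ factors over the places of $\bF$, at each real place $h_v$ localizes the spectral parameter near $t_v$, at each $\vp\mid\idlN$ the factor $h_\vp$ is a nonnegative weight adapted to the local conductor $\cond(\pi_\vp)$ with $h_\vp(\pi_\vp)\neq0$, and $h(\pi)\gg_{\epsilon}\Cond(\pi)^{-\epsilon}$; the target then reads $\cM\ll_{\epsilon}\bigl((\Nr\idlN)\prod_{v\mid\infty}(1+\norm[t_v])\bigr)^{1+\epsilon}$.

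To estimate $\cM$, I would open one factor $L(\tfrac12,\pi')$ by a short Dirichlet polynomial and the square $L(\tfrac12,\pi')^{2}$ by its approximate functional equation, producing a shifted-divisor arithmetic weight, and then execute the sum over $\pi'$ by the Petersson--Kuznetsov formula over $\bF$. The diagonal yields an admissible main term; the off-diagonal is a sum over nonzero ideals $\fc$ of Kloosterman sums twisted by the divisor weight against archimedean Bessel integrals. At this point the distributional Motohashi formula of the first author reorganizes the off-diagonal into its dual shape: a fourth moment $\sum_{\chi}\norm[L(\tfrac12,\chi)]^{4}\,\widetilde h(\chi)$ over Hecke characters of $\bF$ of conductor dividing $\idlN$, bounded after the archimedean localization by the standard fourth-moment estimate $\ll_{\epsilon}(\Nr\idlN)^{1+\epsilon}$; together with further explicit main terms; together with a dual geometric term that carries the arithmetic of the ramified primes. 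The distributional formulation is what allows the peaked test function $h$ to be transported through the transform without loss, uniformly in $\bF$.

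The dual geometric term is the crux. After separating off the archimedean Bessel-type transforms---treated by stationary phase, uniformly in the $t_v$ as in the hybrid analysis of Jutila and Motohashi---the finite part at each ramified prime $\vp$, with $\vp^{k}\parallel\idlN$ and $k\in\{1,2\}$, collapses to a complete character sum modulo a bounded power of $\vp$ built from additive and multiplicative characters composed with rational functions: a \emph{hypergeometric sum} in the sense of Katz. I would invoke Katz's $\ell$-adic machinery: the associated hypergeometric sheaf is geometrically irreducible, pure of the expected weight, and lisse of the expected rank off a bad locus of bounded size, which yields the full square-root cancellation uniformly in the additive and multiplicative twisting data. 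The hypothesis that $\pi_\vp$ be \emph{not supercuspidal} when $\cond(\pi_\vp)=4$ is decisive precisely here: it forces $\pi_\vp$ to be a principal series or an unramified twist of the Steinberg representation, so that its local Bessel function at $\vp$ has the explicit shape making the local integral a genuine hypergeometric sum of full cohomological dimension, rather than one undergoing a rank drop that would wreck the uniform square-root bound. Multiplying these local estimates over $\vp\mid\idlN$ and reassembling with the archimedean bound yields the dual term $\ll_{\epsilon}\bigl((\Nr\idlN)\prod_{v\mid\infty}(1+\norm[t_v])\bigr)^{1+\epsilon}$, which closes the estimate for $\cM$ and hence proves Theorem \ref{Main}.

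The principal obstacle is this last step. In contrast with the Kloosterman sums controlled by Weil's bound in the work of Conrey and Iwaniec, here one meets genuine multivariable hypergeometric sums whose conductor, rank, and degeneration locus must be controlled \emph{uniformly} over the whole range of the $\fc$-sum and over every admissible additive/multiplicative twist; verifying that Katz's lisse-and-pure hypotheses hold in each case that occurs, matching the resulting ranges against the archimedean oscillatory integrals, and keeping track of the multiplicativity across the several primes dividing $\idlN$ so as to recover exactly the exponent $1$---and lose no power of $\Nr\idlN$---is the technical heart of the argument. A secondary, pervasive difficulty is making the distributional Motohashi formula and the fourth-moment bound for Hecke $L$-functions fully uniform and hybrid over the totally real field $\bF$, with at worst polynomial dependence on the discriminant.
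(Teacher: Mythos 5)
Your overall skeleton (non-negativity of central values, a cubic moment with a factorizable non-negative weight, Motohashi reciprocity to a fourth moment of Hecke $L$-functions, Katz's hypergeometric sums supplying square-root cancellation at ramified primes) is the right one, but the mechanism you describe is not the one the paper uses, and as written it has a real gap. The paper applies the distributional identity $\Mt_3(\Psi)+\Dt_3(\Psi)=\Mt_4(\Psi)+\Dt_4(\Psi)$ of Theorem \ref{thm:Motohashi} \emph{directly} to a factorizable test function $\Psi\in\Sch(\Mat_2(\A))$ chosen place by place in Section \ref{sec:localcomputations}: the spectral weight $h(\pi')$ is the local distribution $\Mt_{3,v}(\Psi_v\mid\cdot)$, non-negative because the local test functions are of positive type, and non-vanishing at $\pi_{\vp}$ by construction (projectors onto newvectors, onto the $\pi(\theta^{\flat})$-isotypic part, etc.). There is no approximate functional equation, no short Dirichlet polynomial, no Petersson--Kuznetsov formula and no Kloosterman sums anywhere in the argument; your step ``open one $L$-factor, apply Kuznetsov, then let the Motohashi formula reorganize the off-diagonal'' conflates a classical derivation of spectral reciprocity with its application, and carried out literally it would force you to re-derive the reciprocity over a general totally real field --- exactly the content you are supposed to be quoting. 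Moreover the dual side is not just ``fourth moment plus a geometric term'': the degenerate terms $\Dt_3(\Psi)$ and $\Dt_4(\Psi)$ (residues of $\Mt_3(\Psi\mid\id,s)$ and $\Mt_4(\Psi\mid\id,s)$) require a separate place-by-place Laurent-coefficient analysis (Section \ref{sec:proofcompleted}), and the paper stresses that on the dual side these residues, not the fourth moment, can dominate; your ``further explicit main terms'' elides a genuine step. The fourth moment itself is controlled by the local dual-weight bounds together with a large sieve over number fields from \cite{BFW21+}, not by a black-box ``standard fourth-moment estimate.''

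You also misplace where the hypotheses and the hard arithmetic enter. In the paper the hypergeometric sum occurs precisely at the \emph{supercuspidal} places with $\cond(\pi_{\vp})=2$ (depth zero): the local dual weight $\Mt_4(\overline{C_{\theta}}\mid\chi,\tfrac12)$ reduces to
$S(\theta^{\flat},\chi)=\sum_{\alpha,t}\overline{\theta^{\flat}(\alpha+\sqrt{\varepsilon})}\,\chi(t)\,\phi(\alpha^{2}-\varepsilon t)\,\phi(1-t)$,
with $\theta^{\flat}$ a regular character of $k_{\bE}^{\times}$ parametrizing $\pi_{\vp}$, and Katz's theory (Section \ref{sec:charactersums}) gives $S\ll q$. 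So the hardest case is one your sketch does not flag, and the non-supercuspidality hypothesis at $\cond(\pi_{\vp})=4$ is \emph{not} there to prevent a rank drop of a hypergeometric sheaf: by Remark \ref{rmk:Hu-Petrow-Young}, for supercuspidals of conductor exponent $4$ even a tight bound for the dual weight does not suffice for the Weyl exponent (a family-size obstruction visible in Theorem \ref{MainBis}), while the non-supercuspidal conductor-$4$ case is $\pi(\xi,\xi^{-1})$ with $\cond(\xi)=2$ and was already treated in \cite{BFW21+}. Your explanation of the ``decisive'' role of that hypothesis is therefore incorrect, and the local analysis you would actually need (Bessel functions versus the explicit matrix-coefficient test functions, and the reduction of the depth-zero dual weight to $S(\theta^{\flat},\chi)$) is absent from the proposal.
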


\begin{remark}
	The conductor exponent is defined by $\Cond(\pi_{\vp}) = \Nr(\vp)^{\cond(\pi_{\vp})}$. Moreover, if $\pi$ is generated by the unique Hilbert newform, then it has the level ideal 
	$$ \idlN = \prod_{\vp < \infty} \vp^{\cond(\pi_{\vp})}. $$
	Hence Theorem \ref{Main} is translated into the classical language in terms of Hilbert modular forms.
\end{remark}

\begin{remark} \label{rmk:PYFamily}
	Our result is completely new even over $\Q$, since we are able to deal with supercuspidal representations. Petrow and Young \cite{PY19_All} proved a uniform Weyl bound for some family of $\PGL_2$ automorphic representations $\pi$ over $\Q$, in which for every local component $\pi_p$ one can find a character $\xi_p$ of $\Q_p^{\times}$ with conductor exponent $n_p$, such that $\cond(\pi_p \otimes \xi_p) \leq n_p$. But for a supercuspidal $\pi_p$ of $\PGL_2(\Q_p)$ with $\cond(\pi_p) = 2$, we have
	$$ \cond(\pi_p \otimes \xi_p) = \begin{cases} \cond(\pi_p)=2 ,& \text{if } n_p \leq 1, \\ 2n_p, & \text{if } n_p \geq 2, \end{cases}
	$$
	hence $\pi_p$ does not satisfy the local condition as required by Petrow--Young's family. The same is true for the family treated in a previous work of Balkanova, Frolenkov and Wu \cite{BFW21+}.
\end{remark}

	As in many existing arithmetic applications, subconvex bounds for $L$-functions are related to equidistributions of geometric/arithmetic objects in families, and sharper exponents automatically imply better rate of convergence. See, for instance,
\begin{itemize}	
	\item the Park City lecture notes of Michel \cite[Lecture 5]{Mi07} including equidistributions of Heegner points and the Quantum Chaos problems,
	\item the error term bounds for the number of integral representations of integral ternary quadratic forms \cite[Corollary 2]{BH10},
	\item the error term bounds in prime geodesic theorems \cite{SY13, BF18, BBCL20}.
\end{itemize}
More fascinating phenomena consist of the essential requirement of either \emph{uniformity} or/and \emph{Weyl-type} quality of subconvexity in certain applications: See the work of Andersen and the first author \cite{AW22} on the partition function for ``uniformity'', and the work of Ghosh and Sarnak \cite{GS11} and Matom\"aki \cite{Ma12} on real zeros of holomorphic cusp forms for ``Weyl-type''.

	To prove (uniform) Weyl bounds, Motohashi's formula on moments of $L$-functions turns out to be a very crucial and 
fundamental tool. From 1990's, Motohashi \cite{Mo93, Mo97} developed a summation formula relating the fourth moment of the Riemann zeta functions and the cubic moment of the modular $L$-functions for the full modular group $\SL_2(\Z)$. In the inverse direction, formulae of such type have been utilized to study cubic moments and thus Weyl-type bounds for modular $L$-functions by Conrey and Iwaniec \cite{CI00}, Young \cite{Yo17}, Petrow \cite{Pe15} and Petrow and Young \cite{PY18, PY19_CF, PY19_All}.

	Recently, a general version of Motohashi's formula over number fields has been established by the first author \cite{Wu22}, which was subsequently applied by Balkanova, Frolenkov and Wu \cite{BFW21+} to generalize Petrow--Young's Weyl bound \cite{PY19_CF} to cube-free Dirichlet characters over totally real number fields. 
Note that all the above-mentioned Weyl bounds are proven for $\GL_1$ $L$-functions with Dirichlet or Hecke characters, and it is desirable to see for which family of $\PGL_2$ $L$-functions Motohashi's formula can establish Weyl type bounds. Theorem \ref{Main} adds one supercuspidal representation to the family, extending our former result \cite{BFW21+}. We expect that Motohashi's formula could be utilized to establish Weyl bounds for \emph{all} $\PGL_2$ $L$-functions, but this seems too ambitious given the current version of Motohashi's formula.
In fact, the case of $\PGL_2$ cuspidal representations with prime conductor cannot be covered in the work of 
Petrow and Young \cite{PY19_CF} merely using Motohashi's formula, and it is Blomer, Humphreis, Khan and Milinovich \cite[Corollary 3]{BHKM20} who are able to explore subconvexity for $\PGL_2$ $L$-functions for cuspidal representations with prime conductor by introducing an extra amplification process.

It is more practical to expect that a Weyl-type bound can be proven for a cuspidal $\pi$ of $\PGL_2$ via a (Lindel\"of consistent estimate) of cubic moment over a family $\mathcal{F}$ if the size of the family satisfies $|\cF|\asymp \Cond(\pi)^{1/2}$. If we restrict to sub-families of $\pi$ with spherical archimedean components and conductor exponents $\cond(\pi_{\vp}) \leq 4$, our computation seems to identify all those componentwise-defined $\cF$ with the property $|\cF|\asymp \Cond(\pi)^{1/2}$ for each $\pi \in \cF$. Precisely, we establish the following result which strengthens Theorem \ref{Main}, and shows the limit of the method of cubic moments via Motohashi's formula in \cite{Wu22}.
	
\begin{theorem} \label{MainBis}
	Let $\bF$ be a totally real number field with adelic ring $\A$. For $\pi$ an automorphic representation of $\PGL_2(\A)$ such that 
\begin{itemize}
	\item at every real place $v$ the local component $\pi_v$ is spherical with respect to $\SO_2(\R),$
	\item at every finite place $\vp$ the conductor exponent satisfies $\cond(\pi_{\vp}) \leq 4,$ and $\pi_{\vp}$ is not supercuspidal if $\cond(\pi_{\vp}) = 4$.
\end{itemize}	
	Let $S_j=S_j(\pi)$ be the set of finite places $\vp$ such that $\cond(\pi_{\vp})=j$, and write 
	$$ \Cond_j(\pi) = \prod_{\vp \in S_j} \Cond(\pi_{\vp}). $$
	Then we have the following bound
	$$ L( \tfrac{1}{2}, \pi) \ll_{\epsilon, \bF} \Cond_1(\pi)^{1/6} \Cond_3(\pi)^{1/18} \Cond(\pi)^{1/6+\epsilon}. $$		
\end{theorem}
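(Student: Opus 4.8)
The plan is to deduce Theorem~\ref{MainBis} from a Lindel\"of-consistent bound for a cubic moment of $L$-functions over a carefully chosen spectral family $\cF$, produced by Motohashi's formula over $\bF$ from \cite{Wu22}. First I would fix the target representation $\pi$ and build $\cF$ componentwise: at each real place take the spherical spectrum, and at each finite place $\vp$ choose the local family of representations whose conductor exponent matches what Motohashi's formula naturally produces as the ``dual'' level. Concretely, at a place with $\cond(\pi_\vp)=2$ one wants the full family of conductor-$\vp^2$ representations (including supercuspidals), at a place with $\cond(\pi_\vp)=4$ the family of conductor-$\vp^4$ non-supercuspidals, and at a place with $\cond(\pi_\vp)=1$ the conductor-$\vp$ Steinberg-type family; one checks that in each case $|\cF_\vp|\asymp \Nr(\vp)^{\cond(\pi_\vp)/2}$ up to constants, so that globally $|\cF|\asymp \Cond(\pi)^{1/2}$, matching the heuristic size described before the theorem.

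Next I would invoke Motohashi's formula to express a suitably weighted cubic moment $\sum_{\varpi\in\cF} \lambda_{\varpi} L(\tfrac12,\varpi)^3$ (with positive weights coming from a Whittaker/Bessel identity) in terms of an integral transform of a fourth-moment-type object attached to $\pi$, plus Eisenstein and degenerate contributions. The archimedean and the ``nice'' finite places ($\cond\le 2$, or $\cond=4$) are handled exactly as in \cite{BFW21+}, where the dual side involves Kloosterman-type sums whose complete-sum evaluation over residue rings reduces to hypergeometric sums over finite fields; Katz's $\ell$-adic machinery gives square-root cancellation for these, yielding the $\Cond(\pi)^{1/6+\epsilon}$ contribution from those places. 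The genuinely new input is the places with $\cond(\pi_\vp)\in\{1,3\}$: here the local family is smaller than in the ``generic'' $\cond=2,4$ case, the dual local weight is more rigid, and one gets only a partial saving. Carrying out the local computation at such $\vp$ — identifying the relevant local integral, relating it to an incomplete or twisted hypergeometric sum, and extracting the precise exponent — should produce the factors $\Cond_1(\pi)^{1/6}$ and $\Cond_3(\pi)^{1/18}$ (note $1/18 = 1/6 - 1/9$, reflecting a weaker-than-square-root but still nontrivial gain at $\cond=3$ places).

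The main obstacle I expect is precisely this local analysis at the places $\vp$ with odd conductor exponent. In the even-exponent case the amplifying family has ``just the right'' size so that positivity of the cubic moment plus square-root cancellation on the dual side closes the argument with the clean $1/6$; for odd exponents the componentwise family is forced to be smaller, so one must (i) still obtain a useful positive-weight cubic-moment inequality — this requires checking that the chosen local test vectors give a nonnegative spectral weight — and (ii) prove the corresponding local exponential/hypergeometric sum bound with the diminished-but-nonzero cancellation, which is where $\ell$-adic cohomology (bounding the dimension and weights of the relevant Katz sheaf after pullback along the degree-raising map forced by the smaller family) enters in a more delicate way than in \cite{BFW21+}. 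Once both the even- and odd-exponent local contributions are assembled and multiplied across all finite places, together with the archimedean contribution, the stated bound follows; specializing to the case $S_1=S_3=\varnothing$ recovers Theorem~\ref{Main}.
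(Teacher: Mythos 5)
Your high-level skeleton (Motohashi's formula from \cite{Wu22}, local test functions selecting $\pi$, positivity on the cubic-moment side, Katz's hypergeometric sheaves on the dual side) is the paper's, but the mechanism you propose for the non-uniform factors $\Cond_1(\pi)^{1/6}\Cond_3(\pi)^{1/18}$ is not what happens, and the step that actually produces them is missing from your plan. In the paper, once the local test functions are fixed, the genuine fourth moment $\Mt_4(\Psi)$ is bounded \emph{uniformly} by $\Cond(\pi)^{1/2+\epsilon}$ via the local dual-weight estimates together with the large sieve of \cite{BFW21+}; at places with $\cond(\pi_{\vp})\in\{1,3\}$ the local dual weights are elementary (a $\gp{K}_0[\vp]$-projection computation at exponent $1$, Gauss sums at exponent $3$), and no ``incomplete or twisted hypergeometric sum'' with partial cancellation arises there. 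The loss comes instead from the degenerate term $\Dt_4(\Psi)$, i.e.\ the residues of $\Mt_4(\Psi \mid \id, s)$ at $s=1$ and $s=0$: the local Laurent coefficients at $s=1$ are of size $\Cond(\pi_{\vp})^{1+\epsilon}$ when $\cond(\pi_{\vp})=1$ and $\Cond(\pi_{\vp})^{2/3+\epsilon}$ when $\cond(\pi_{\vp})=3$, against $\Cond(\pi_{\vp})^{1/2+\epsilon}$ elsewhere, yielding $\Dt_4(\Psi)\ll \Cond_1(\pi)^{1/2}\Cond_3(\pi)^{1/6}\Cond(\pi)^{1/2+\epsilon}$ and hence the stated exponents after taking cube roots. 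Your proposal mentions ``degenerate contributions'' only in passing and never estimates them (nor $\Dt_3(\Psi)$, which needs its own argument showing it is negligible); without this, the identity $\Mt_3+\Dt_3=\Mt_4+\Dt_4$ cannot be closed, and the local analysis you propose at odd-exponent places would not recover $1/6$ and $1/18$ because the dual weights there are already sharply and trivially bounded.

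You also misplace the genuinely new input. The places with $\cond(\pi_{\vp})\in\{1,3\}$ are the locally easy ones; the new hard case --- and the only point where Katz's theory enters --- is the depth-zero supercuspidal with $\cond(\pi_{\vp})=2$, which is \emph{not} covered by \cite{BFW21+} (that paper treats only the non-supercuspidal twists $\pi(\xi,\xi^{-1})$ and $\mathrm{St}_{\xi}$); lumping ``$\cond\le 2$ or $\cond=4$'' into the BFW21+ analysis skips exactly the double character sum $S(\theta^{\flat},\chi)$ and its bound $\ll q$ via hypergeometric sheaves. Finally, your family-size bookkeeping is off: at a place with $\cond(\pi_{\vp})=1$ the Steinberg-type family has $O(1)$ elements, not $\Nr(\vp)^{1/2}$, and the simple supercuspidal family at exponent $3$ has size about $\Nr(\vp)$, not $\Nr(\vp)^{3/2}$; the impossibility of a componentwise family of size $\Nr(\vp)^{\cond(\pi_{\vp})/2}$ at odd exponent is indeed the heuristic reason for non-uniformity, but in the actual proof it manifests through the degenerate residues, not through an amplified cubic-moment count.
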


\begin{remark}
As one may see from the proof of Theorem \ref{MainBis}, an interesting phenomenon is that the main contribution on the dual side of the cubic moment formula does not always come from the fourth moment but from some degenerate terms (residues of the dual weights at special points), and manifests no uniform quality. This is in contrast with Michel--Venkatesh's work \cite{MV10}, where the total contribution from all the degenerate terms cannot surpass those from the dual moment. Note also that the dual moment itself has a good size for the Weyl bound.
\end{remark}

\subsection{Structure of this paper}
	
	The proof of Theorem \ref{Main} is based on applications of Motohashi's formula developed by the first author \cite{Wu22}. We will recall the precise form of this formula in Section \ref{sec:Motohashiformula} (Theorem \ref{thm:Motohashi}).
	
	For the convenience of readers not familiar with the language of representation theory, we recollect the relevant notation in Section \ref{sec:representationtheory} while giving a brief survey on the relevant theories of zeta integrals. Some convenient references include Lang \cite[Chapter \Rmnum{14}]{Lan03} for Tate's thesis, Goldfeld and Hundley \cite{GoJ11} for the Godement--Jacquet theory, and Gelbart \cite{Ge75} for the Rankin--Selberg theory for $\GL_2 \times \GL_1$ (due to Hecke and Jacquet--Langlands). The idea of invariant distributions contained in Weil's re-interpretation of Tate's theis is emphasized along our presentation. Weil's idea guided the discovery of the first author's version of Motohashi's formula in \cite{Wu22}. It should also be helpful for readers to understand it with some depth.

	The general strategy of the proof amounts to modifying the test functions locally in order to select the desired families of $L$-functions on the cubic moment side. The first main innovation in this paper is the specification of test functions. Compared to Petrow--Young's approach, our new version of Motohashi's formula with local-global features allows us to work with all test functions locally as in \cite{AW22}. In Section \ref{sec:localcomputations}, we classify those local representations $\pi_{\vp}$ relevant to Theorem \ref{Main}, choose the test function in each case, and present all necessary bounds of the local dual weight functions. The case of depth-zero supercuspidal representations is the most difficult part and illustrates another very interesting feature in this paper, for which we may reduce the problem to estimating the double character sum
\begin{align*}
\sum_{\alpha\in\F_q} \rho(\alpha+\omega)\sum_{t\in\F_q} \chi(t)\phi(\alpha^2-\omega^2 t)\phi(1-t)
\end{align*}
defined over the finite field $\F_q$. Here $\rho$ (resp. $\chi$) is a non-trivial character of $\F_{q^2}^{\times}$ (resp. $\F_q^{\times}$), and $\phi$ is the quadratic character of $\F_q^{\times}$.
This sum has its origin in arithmetic geometry, and will be treated using Katz's work on hypergeometric sums, for which Deligne's proof on Weil's conjectures plays an essential role. These details will be given in Section \ref{sec:charactersums} with the necessary theory developed by Katz \cite{Ka90}. Since this part has a different feature, it can be read independently, and different notation will be used. To understand why the above double character sum appears in our current work, it should be better to mention that another double character sum
\begin{align*}
\mathop{\sum\sum}_{u,v\in\F_q}\chi(u)\overline{\chi}(u+1)\overline{\chi}(v)\chi(v+1)\eta(uv-1)
\end{align*}
appears naturally in the work of Conrey--Iwaniec \cite{CI00} and Petrow--Young \cite{PY19_CF, PY19_All}, which has been shown in \cite{Xi23} to be essentially a hypergeometric sum, and only an upper bound with squareroot cancellations is sufficient therein! In Section \ref{sec:proofcompleted}, we recollect the local bounds to prove the main result Theorem \ref{Main} via our Motohashi's formula.

Motohashi's formula received considerable attentions in recent years guided by applications to subconvexity problems for $L$-functions. A period approach to understanding this formula has been developed by Michel and Venkatesh \cite{MV10} and Nelson \cite{Ne20}. We initiated a comparison between the distributional version and Nelson's in \cite[\S 7 Appendix]{Wu19_TF}. We shall continue and refine the comparison in Appendix \ref{sec:appendix}, clarifying a non-trivial gap from Nelson's version to the distributional one. 

\begin{remark} \label{rmk:Hu-Petrow-Young}
	For supercuspidal representations with larger conductor exponents, Hu, Petrow and Young have been investigating some cases over $\Q$. We also have some local computations in the case $\cond(\pi_{\vp}) \geq 4$. It seems that a tight bound for the dual weights does not suffice for the corresponding Weyl bound, unlike the cases treated by Petrow and Young \cite{PY19_CF, PY19_All}. In particular, this is why we have excluded supercuspidals with conductor exponent $4$. There is still a highly non-trivial difficulty in proving Weyl type bounds for all \emph{suitable} $\PGL_2$ $L$-functions, whose resolution would reach beyond the spectral reciprocity offered by the current version of Motohashi's formula.
\end{remark}

	\subsection{Conversations with weight functions in spectral reciprocity}
	Before closing this sections, we would like to leave some comments and remarks on choosing and bounding the weight functions in applications of spectral reciprocity formulas, like Motohashi's formula. The readers can ignore this part for now, and come back to such issues after all the subsequent treatments have been checked.
	
	We now mention two different opinions in treating the weight functions on both sides. 
\begin{itemize}
	\item  One opinion, focused on applications to the subconvexity problem,  looks for test functions on the relevant groups/algebras, and bounds the weight functions on each side qualitatively. A typical example is the celebrated work of Michel and Venkatesh \cite{MV10} on the subconvexity for $\GL_2$, in which the asymptotic behaviors of the weight functions can be conveniently explained in terms of certain equidistribution properties.

	\item The other one looks for specifying the admissible weight function on one side of the reciprocity formula, and bound the dual weight function via establishing an explicit transformation formula. This follows Motohashi's original paper \cite{Mo93}, and has advantage towards the moment problems.
\end{itemize}

	Our method in this paper is a mixture of the above two opinions: at real places we follow the second opinion while at non-archimedean places the first one helps. Some further discussions on this mixture are interesting and necessary.
	
	As mentioned above, a very interesting feature contained in our treatment is the application of Katz's theory of hypergeometric sums to bounding the dual weight functions. Hence Deligne's proof of Weil's conjectures is applied in an essential way. In this lower rank group case of $\GL_2$, the computation leading to the relevant exponential sums seems to be easier with the first opinion. However, some phenomenon looks mysterious in this way: the treatment of the local dual weight function in the simple supercuspidal case, or equivalently the case of local conductor exponent $\cond(\pi_{\vp})=3$, is much simpler than the treatment in the depth $0$ case, and even simpler than the treatment in the case of Petrow--Young's family mentioned in Remark \ref{rmk:PYFamily}. This phenomenon seems to be better understood with the local weight transformation process conjectured in \cite[(1.14)]{BFW21+}. In fact, our test function selects only the relevant supercuspidal representation $\pi_{\vp}$, hence this three-step process simplifies to a two-step one:
\begin{equation} \label{eq:GenLocInv}
	\begin{cases}
	H(t) =\displaystyle\int_{\widehat{\bF^{\times}}} \xi(t)\norm[t]_{\vp}^c \gamma(c,\xi_{\vp},\psi_{\vp})^3 \ud \mu_{\mathrm{PL}}(\xi) \int_{\bF^{\times}} \FBessel{y}{\pi_{\vp}}{\psi_{\vp}} \xi(y) \norm[y]_{\vp}^c \ud^{\times}y, \\
	\widetilde{h}(\chi_{\vp}) = \chi_{\vp}(-1)\displaystyle \int_{\bF^{\times}} H(t) \psi_{\vp}(t) \chi_{\vp}^{-1}(t) \norm[t]_{\vp}^{-1/2} \ud^{\times}t,
	 \end{cases}
\end{equation}
	where $\bF = \bF_{\vp}$ is the completion of $\bF$ at $\vp$, and $\FBessel{y}{\pi_{\vp}}{\psi_{\vp}}$ is the Bessel function of $\pi_{\vp}$. Now that the first integral
	$$ \int_{\bF^{\times}} \FBessel{y}{\pi_{\vp}}{\psi_{\vp}} \xi(y) \norm[y]_{\vp}^c \ud^{\times}y = \gamma(\tfrac{1}{2}-c,\pi_{\vp} \times \xi_{\vp}^{-1}, \psi_{\vp}) $$
	is simply a twisted local gamma factor of $\pi_{\vp}$, the complexity of this transformation is measured by the complexity of these twisted local gamma factors. It is also not hard to believe that the complexity is only concerned with characters $\xi_{\vp}$ with conductor exponent $1$, since the most difficult parts are always concerned only with those exponential sums defined over the residual class field $k_{\bF}$.
\begin{itemize}
	\item For simple supercuspidals, the relevant twisted local gamma factors are computed in \cite[Corollary 3.12]{AL16}. The dependence on the twisting character $\xi$ is very mild, essentially only $\xi_{\vp}(\varpi)$ is involved.
	\item For principal series representations in Petrow--Young's family, the relevant twisted local epsilon factors are products of two Gauss sums defined over $k_{\bF}$, which is a standard fact. While the local gamma factors would include an extra $\zeta_{\vp}(s)$ for two special $\xi_{\vp}$.
	\item For depth $0$ supercuspidal $\pi_{\vp}$, the relevant twisted local gamma factors are computed in \cite[(25.3.1) \& (25.4.1)]{BuH06}. Gauss sums over $k_{\bE}$, the unramified quadratic extension over $k_{\bF}$, are involved.
\end{itemize}
	We do observe the same order of complexities betwen the twisted local gamma factors and dual weight functions. 
	
	We also note that, if we write the general local weight transformation process in a single formula as
	
\begin{equation} \label{eq:LocInvFGen}
	\widetilde{h}(\chi_{\vp}) = \int_{\widehat{\PGL_2(\bF)}} h(\sigma) K(\sigma, \chi_{\vp}) d\mu_{\mathrm{PL}}(\sigma),
\end{equation}

\noindent and if the admissible weight function $h(\sigma)$ is non-zero at only one discrete series representation $\pi_{\vp}$, then bounding $\widetilde{h}(\chi_{\vp})$ is precisely the same as bounding the kernel function $K(\pi,\chi_{\vp})$. It would be very interesting to see how other ideas such as (some non-archimedean analogue of) microlocalized vectors, which always select $\pi$ and other $\sigma$ in the principal series with comparable conductor with $\pi$, can help bypass Katz's theory or help lead to it more quickly.
	
%
	

\subsection*{Acknowledgements}
We thank Bingrong Huang for pointing out a mistake in an earlier version, and we are also grateful to Yang Cao for his helpful discussions on Lang torsors.
This project was initiated when HW was a postdoc at QMUL in early 2021, resumed when HW visited XJTU during November 2022, and was completed when PX visited USTC during February 2023. We thank these institutions for hospitality.
The work is supported in part by NSFC (No. 12025106, No. 11971370). HW was also supported by the Leverhulme Trust Research Project Grant RPG-2018-401.

\section{Integral Representations of $L$-Functions from Representation Theory}
\label{sec:representationtheory}
	
	\subsection{Recall on Tate's thesis}
	
	For a locally compact abelian group $G$, we write $\widehat{G}$ for the dual group of continuous unitary characters.

	Let $\bF$ be a number field with different $\Dif_{\bF}$ and discriminant $\Dis_{\bF} = \Nr_{\bF/\Q}(\Dif_{\bF})$, ring of adeles $\A$, and group of ideles $\A^{\times}$. Write $\A^{(1)}$ for the subgroup of ideles with adelic norm $1$. We identify $\R_{>0}$ with the image of a fixed section map of the adelic norm map $\bF^{\times} \backslash \A^{\times} \to \R_{>0}$, so that $\bF^{\times} \backslash \A^{\times} \simeq \bF^{\times} \backslash \A^{(1)} \times \R_{>0}$ is identified as the direct product of a compact abelian group and $\R_{>0}$. Let $V_{\bF}$ be the set of all places of $\bF$. We fix the non-trivial additive character $\psi: \bF \backslash \A \to \C^1$ \`a la Tate, and choose the Haar measure $\ud x =\prod_v\ud x_v$ on $\A$ to be self-dual with respect to $\psi$. The Haar measure $\ud^{\times} x =\prod_v\ud^{\times}x_v$ on $\A^{\times}$ is taken to be the Tamagawa measure with factors of convergences $\zeta_v(1)$, namely
	\begin{align*}
	\ud^{\times}x_v = \zeta_v(1) \frac{\ud x_v}{\norm[x_v]_v}, \quad 
	\zeta_v(s) := \begin{cases} \pi^{-s/2} \Gamma(s/2),\ \ & \text{if } \bF_v = \R,\\
	(2\pi)^{1-s} \Gamma(s), & \text{if } \bF_v = \C,\\
	(1-\Nr(\vp)^{-s})^{-1}, & \text{if } v=\vp < \infty.\end{cases}
	\end{align*}
	
	Let $\chi = \otimes_v' \chi_v: \R_{>0} \bF^{\times} \backslash \A^{\times} \to \C^1$ be a Hecke character. The goal of Tate's thesis is to establish the meromorphic continuation and functional equation satisfied by the Hecke $L$-function $L(s,\chi)$. This is achieved with auxiliary (decomposable) Bruhat--Schwartz functions $\Phi = \otimes_v' \Phi_v \in \Sch(\A)$ by the fundamental equations
\begin{align}
\Zeta(s, \Phi ,\chi)
&:= \int_{\bF^{\times} \backslash \A^{\times}} \Big(\sum_{\alpha \in \bF^{\times}}\Phi(\alpha t) \Big) \chi(t) \norm[t]_{\A}^s \ud^{\times}t \nonumber \\
&= \int_{\substack{\bF^{\times} \backslash \A^{\times} \\ \norm[t]_{\A} \geq 1}} \Big(\sum_{\alpha \in \bF^{\times}}\Phi(\alpha t) \Big) \chi(t) \norm[t]_{\A}^s \ud^{\times}t + \int_{\substack{\bF^{\times} \backslash \A^{\times} \\ \norm[t]_{\A} \geq 1}} \Big(\sum_{\alpha \in \bF^{\times}}\widehat{\Phi}(\alpha t) \Big) \chi^{-1}(t) \norm[t]_{\A}^{1-s} \ud^{\times}t \label{TateIntG} \\
&\quad + \delta_{\chi = \id} \Vol(\bF^{\times} \backslash \A^{\times}) \Big( \frac{\widehat{\Phi}(0)}{s-1} - \frac{\Phi(0)}{s} \Big) \nonumber \\
&= \int_{\bF^{\times} \backslash \A^{\times}} \Big(\sum_{\alpha \in \bF^{\times}}\widehat{\Phi}(\alpha t) \Big)  \chi^{-1}(t) \norm[t]_{\A}^{1-s} \ud^{\times}t = \Zeta(1-s, \widehat{\Phi}, \chi^{-1}), \nonumber
\end{align}
\begin{equation} \label{TateIntD}
	\Zeta(s, \Phi, \chi) = L(s,\chi) \cdot \prod_{v \mid \infty} \Zeta_v(s, \Phi_v, \chi_v) \cdot \prod_{\vp < \infty} \frac{\Zeta_\vp(s, \Phi_\vp, \chi_\vp)}{L_{\vp}(s,\chi_{\vp})},
\end{equation}
	together with the \emph{local theory} for the corresponding local zeta integrals (meromorphic continuation and local functional equations)
	$$ \Zeta_v(s, \Phi_v, \chi_v) := \int_{\bF_v^{\times}} \Phi_v(t) \chi_v(t) \norm[t]_v^s \ud^{\times}t. $$	
	Here $\delta_*$ is equal to $1$ if $*$ is satisfied, and $0$ otherwise; $\widehat{\Phi}$ is the $\psi$-Fourier transform defined by
	$$ \widehat{\Phi}(x) := \int_{\A} \Phi(y) \psi(-xy) \ud x; $$
	and the product in (\ref{TateIntD}) is in fact finite because all but finitely many factors are equal to $1$. Consequently, both the (tempered) distribution $\Phi \mapsto \Zeta(s, \Phi, \chi)$ and the \emph{weight distribution}
\begin{equation} \label{TateWt}
	\Phi \mapsto \Wt(s, \Phi, \chi) := \prod_{v \mid \infty} \Zeta_v(s, \Phi_v, \chi_v) \cdot \prod_{\vp < \infty} \frac{\Zeta_\vp(s, \Phi_\vp, \chi_\vp)}{L_{\vp}(s,\chi_{\vp})}
\end{equation}
have meromorphic continuations to $s \in \C$ and functional equations relating $s$ and $1-s$, hence the desired meromorphic continuation and functional equation for $L(s,\chi)$ follows directly. Note that the equation
$$ \Zeta(\tfrac{1}{2},\Phi,\chi) = L(\tfrac{1}{2},\chi) \cdot \Wt(\tfrac{1}{2},\Phi,\chi) $$
realizes the integral representation $\Zeta(\frac{1}{2},\Phi,\chi)$ as a weighted special $L$-value, in the flavour of the moment problems for $L$-functions. The weight is determined by $\Phi$, which is regarded as a \emph{test function}.
	
	Weil \cite{We65} observed that both distributions $\Phi \mapsto \Zeta(s, \Phi, \chi) $ and $\Phi \mapsto \Wt(s, \Phi, \chi) $ are elements in $\Hom_{\A^{\times}}(\Sch(\A), \chi^{-1} \norm_{\A}^{-s})$, which has dimension at most $1$ since locally at each place $\Hom_{\bF_v^{\times}}(\Sch(\bF_v), \chi_v^{-1} \norm_v^{-s})$ has dimension $1$ for any $s \in \C$ by the theory of homogeneous distributions. This explains the local functional equations in a way which is more conceptual than Tate's original proof. The idea of uniqueness of distributions satisfying certain co-variance properties turns out to be insightful and fruitful in the subsequent development of automorphic representation theory (see \cite{Sha74} for example).
	
	
	According to different generalizations of Hecke characters, Tate's thesis has been generalized in many ways, among which the Godement--Jacquet theory \cite{GoJa72} for $\GL_2$ and Rankin--Selberg theory \cite{JPS83} for $\GL_2 \times \GL_1$ are relevant to this paper. Before getting into the theories, we first set up the relevant notation and conventions for $\GL_2$.

	\subsection{Notation for $\GL_2$}
	
	For $R \in \left\{ \bF_v \ \middle| \ v \in V_{\bF} \right\} \cup \{ \A \}$, we define the following subgroups of $\GL_2(R)$
	$$ \gp{Z}(R) = \left\{ z(u) := \begin{pmatrix} u & 0 \\ 0 & u \end{pmatrix} \ \middle| \ u \in R^{\times} \right\}, \quad \gp{N}(R) = \left\{ n(x) := \begin{pmatrix} 1 & x \\ 0 & 1 \end{pmatrix} \ \middle| \ x \in R \right\}, $$
	$$ \gp{A}(R) = \left\{ a(y) := \begin{pmatrix} y & 0 \\ 0 & 1 \end{pmatrix} \ \middle| \ y \in R^{\times} \right\}, \quad \gp{A}(R)\gp{Z}(R) = \left\{ d(t_1,t_2) := \begin{pmatrix} t_1 & \\ & t_2 \end{pmatrix} \ \middle| \ t_1,t_2 \in R^{\times} \right\}, $$
and equip them with the Haar measures on $R^{\times}, R, R^{\times}, R^{\times} \times R^{\times}$ respectively. We reserve
	$$ w = \begin{pmatrix} 0 & 1 \\ -1 & 0 \end{pmatrix} $$
for the Weyl element in $\GL_2(R)$. The product $\gp{B} := \gp{Z} \gp{N} \gp{A}$ is a Borel subgroup of $\GL_2$. We pick the standard maximal compact subgroup $\gp{K} =\prod_v \gp{K}_v$ of $\GL_2(\ag{A})$ by defining
$$ \gp{K}_v
=\begin{cases} \SO_2(\ag{R}), \ \ &\text{if } \bF_v = \ag{R},\\
\SU_2(\C), & \text{if } \bF_v = \C,
\\ \GL_2(\vo_{\vp}), & \text{if } v = \vp < \infty,\end{cases}$$
and equip it with the Haar probability measure $\ud\kappa_v$. Note that at $v \mid \infty$, this measure coincides with
	$$ \ud g = \frac{\ud X}{\norm[\det X]^2} =  \frac{\ud x_1\ud x_2\ud x_3\ud x_4}{\norm[x_1x_4-x_2x_3]^2}, \quad g = X = \begin{pmatrix} x_1 & x_2 \\ x_3 & x_4 \end{pmatrix} \in \GL_2(\R). $$
	We then define and equip the quotient space
	$$ [\PGL_2] := \gp{Z}(\ag{A}) \GL_2(\bF) \backslash \GL_2(\ag{A}) = \PGL_2(\bF) \backslash \PGL_2(\ag{A}) $$
with the product measure $\ud\bar{g} := \prod_v\ud\bar{g}_v$ on $\PGL_2(\ag{A})$ quotient by the discrete measure on $\PGL_2(\bF)$.

Let $\intL^2(\PGL_2)$ denote the (Hilbert) space of Borel measurable functions $\varphi$ satisfying
\begin{align*}
\begin{cases}
\varphi(z \gamma g) = \varphi(g), \quad \text{for all } \gamma \in \GL_2(\bF), ~z \in \gp{Z}(\ag{A}),~ g \in \GL_2(\ag{A}), \\ 
\text{The Petersson norm } \Pairing{\varphi}{\varphi} := \displaystyle\int_{[\PGL_2]} \norm[\varphi(g)]^2 \ud\bar{g} < \infty. 
\end{cases}
\end{align*}	
Let $\intL_0^2(\PGL_2)$ denote the subspace of $\varphi \in \intL^2(\PGL_2)$ such that its \emph{constant term} vanishes:
$$ \varphi_{\gp{N}}(g) := \int_{\bF \backslash \ag{A}} \varphi(n(x)g) \ud x = 0, \quad \text{a.e. } \bar{g} \in [\PGL_2]. $$
It can be shown that $\intL_0^2(\PGL_2)$ is a closed subspace of $\intL^2(\PGL_2)$. The group $\GL_2(\ag{A})$ acts on $\intL_0^2(\PGL_2)$ resp. $\intL^2(\PGL_2)$, giving rise to a unitary representation $\rpR_0$ resp. $\rpR$. The ortho-complement of $\rpR_0$ in $\rpR$ is the orthogonal sum of the one-dimensional spaces
	$$ \ag{C} \left( \xi \circ \det \right) : \quad \xi \text{ a Hecke character such that } \xi^2 = \id $$
and $\rpR_c$, which can be identified as a direct integral representation over the unitary dual of $\bF^{\times} \backslash \ag{A}^{\times} \simeq \ag{R}_+ \times (\bF^{\times} \backslash \ag{A}^{(1)} )$. Precisely, for $\tau \in \ag{R}$ and a unitary character $\chi$ of $\bF^{\times} \backslash \ag{A}^{(1)}$ which is regarded as a unitary character of $\bF^{\times} \backslash \ag{A}^{\times}$ via trivial extension, we associate a unitary representation $\pi(\chi,i\tau)$ of $\GL_2(\ag{A})$ on the following Hilbert space $V_{\chi,i\tau}$ of functions via right regular translation
\begin{align*}
\begin{cases}
f\Big(\Big(\begin{matrix} t_1 & x \\ 0 & t_2 \end{matrix}\Big) g \Big) = \chi(t_1/t_2) |\frac{t_1}{t_2}|_{\ag{A}}^{\frac{1}{2}+i\tau} f(g), \quad \text{for all }t_1,t_2 \in \ag{A}^{\times}, ~x \in \ag{A}, ~g \in \GL_2(\ag{A}) ; \\ 
\text{The induced norm } \Pairing{f}{f} := \displaystyle\int_{\gp{K}} \norm[f(\kappa)]^2 \ud\kappa < \infty . 
\end{cases}
\end{align*}	
	If $f_{i\tau} \in V_{\chi,i\tau}$ satisfies $f_{i\tau} \mid_{\gp{K}} =: h$ is independent of $\tau$, we call it a {\it flat section}. It extends to a holomorphic section $f_s \in \pi(\chi,s)$ for $s \in \C$. Then $\pi(\chi,i\tau)$ is realized as a component of $\rpR_c$ via the Eisenstein series
	$$ \eis(s,h)(g) = \eis(f_s)(g) := \sum_{\gamma \in \gp{B}(\bF) \backslash \PGL_2(\bF)} f_s(\gamma g), $$
which is absolutely convergent for $\Re s > 1/2$ and admits a meromorphic continuation regular at $s=i\tau$.
	
The irreducible components of $\rpR_0$ and $\rpR_c$, both denoted by $\pi$, are called {\it cuspidal} and {\it continuous} automorphic representations, respectively. Let $e_2 \in V_{\pi}^{\infty}$, the subspace of the underlying Hilbert space of $\pi$ consisting of smooth vectors, and let $e_1 \in V_{\pi^{\vee}}^{\infty}$ be an element of the smooth dual space of $V_{\pi}^{\infty}$. The associated function on $\GL_2(\A)$
	$$ \beta(g) = \beta(e_2,e_1)(g) := \Pairing{\pi(g).e_2}{e_1} $$
	is called a (smooth) matrix coefficient of $\pi$. Hence the function $\widecheck{\beta}(g) := \beta(g^{-1})$ is a (smooth) matrix coefficient of the contragredient representation $\pi^{\vee}$. Note that if $\pi$ is an irreducible component of $\rpR_c$, constructed from elements in $\pi(\chi,s)$, the underlying Hilbert space is defined via the induced norm. If $e_{1,s} \in \pi(\chi^{-1},s)$, $e_{2,s} \in \pi(\chi,s)$ are flat sections based on $e_j = e_{j,0} \in \pi(\chi,0) =: \pi(\chi)$, we write the matrix coefficient as
	$$ \beta_s(e_2,e_1)(g) = \beta(e_{2,s}, e_{1,-s})(g) = \Pairing{\pi(\chi,s)(g).e_{2,s}}{e_{1,-s}}. $$
	The pairing actually makes sense for all $s \in \C$, because $\pi(\chi^{-1},-s)$ is naturally the dual of $\pi(\chi,s)$ via the extended induced pairing
	$$ \Pairing{e_{2,s}}{e_{1,-s}} = \int_{\gp{K}} e_{2,s}(\kappa) e_{1,-s}(\kappa) \ud\kappa. $$
	
	At a finite place $\vp$ and $n \in \Z_{\geq 0}$, we introduce the subgroups of $\gp{K}_{\vp}$
	$$ \gp{K}_0[\vp^n] = \left\{ \begin{pmatrix} a & b \\ c & d \end{pmatrix} \in \gp{K}_{\vp} \ \middle| \ c \in \vp^n \right\}, \quad \gp{K}_1[\vp^n] = \left\{ \begin{pmatrix} a & b \\ c & d \end{pmatrix} \in \gp{K}_{\vp} \ \middle| \ c,d-1 \in \vp^n \right\}. $$
	For an admissible irreducible representation $\pi$ of $\GL_2(\bF_{\vp})$, the conductor exponent $\cond(\pi)$ is the least $n \in \Z_{\geq 0}$ such that $\pi$ contains a non-zero vector invariant by $\gp{K}_1[\vp^n]$. The conductor is therefore defined by $\Cond(\pi)=\Nr(\vp)^{\cond(\pi)}$.

	\subsection{Godement--Jacquet theory}
	
	Let $\pi$ be a cuspidal automorphic representation of $\PGL_2(\A)$. The tensor product theorem shows that $\pi \simeq \otimes_v' \pi_v$ is decomposable as a restricted tensor product of local representations $\pi_v$ of $\PGL_2(\bF_v)$. Hence there are decomposable (smooth) vectors in the underlying space $V_{\pi}^{\infty}$, which span a dense subspace. If $e_1=\otimes_v' e_{1,v} \in V_{\pi^{\vee}}^{\infty}$ and $e_2 = \otimes_v' e_{2,v} \in V_{\pi}^{\infty}$ are decomposable vectors, then so is the associated matrix coefficient $\beta = \otimes_v' \beta_v$, just behaving like a Hecke character. The decomposable matrix coefficients are good analogues of Hecke characters, for which one establishes the fundamental equations with (decomposable) Bruhat--Schwartz functions $\Psi = \otimes_v' \Psi_v \in \Sch(\Mat_2(\A))$
\begin{equation}\label{GJIntG}
\begin{split}
	\Zeta(s, \Psi ,\beta) &:= \int_{\GL_2(\A)} \Psi(g) \beta(g) \norm[\det g]_{\A}^{s+1/2} \ud^{\times}g \\
	&= \int_1^{\infty} \left[ \int_{(\GL_2(\bF) \backslash G^1)^2} \sum_{\xi \in \GL_2(\bF)} \Psi(h_2^{-1} \xi z(t) h_1) \cdot e_1(h_1) e2(h_2) \ud^{\times}h_1 \ud^{\times}h_2 \right] t^{2s+1} \ud^{\times}t \\
	&\quad + \int_1^{\infty} \left[ \int_{(\GL_2(\bF) \backslash G^1)^2} \sum_{\xi \in \GL_2(\bF)} \widehat{\Psi}(h_1^{-1} \xi z(t) h_2) \cdot e_1(h_1) e_2(h_2) \ud^{\times}h_1 \ud^{\times}h_2 \right] t^{3-2s} \ud^{\times}t  \\
	&= \int_{\GL_2(\A)} \widehat{\Psi}(g) \widecheck{\beta}(g) \norm[\det g]_{\A}^{3/2-s} \ud^{\times}g = \Zeta(1-s, \widehat{\Psi}, \widecheck{\beta}), 
\end{split}
\end{equation}
\begin{equation} \label{GJIntD}
	\Zeta(s, \Psi, \beta) = L(s,\pi) \cdot \prod_{v \mid \infty} \Zeta_v(s, \Psi_v, \beta_v) \prod_{\vp < \infty} \frac{\Zeta_{\vp}(s, \Psi_{\vp}, \beta_{\vp})}{L_{\vp}(s,\pi_{\vp})},
\end{equation}
	together with the \emph{local theory} for the corresponding local zeta integrals (meromorphic continuation and local functional equations)
	$$ \Zeta_v(s, \Psi_v, \beta_v) := \int_{\GL_2(\bF_v)} \Psi_v(g) \beta_v(g) \norm[\det g]_v^{s+1/2} \ud^{\times}g. $$	
	Here we have defined the group $G^1 := \left\{ g \in \GL_2(\A) \ \middle| \ \norm[\det(g)]_{\A}=1 \right\}$ so that $\GL_2(\bF) \backslash G^1$ is compact, and the $\psi$-Fourier transform $\widehat{\Phi}$ is defined by
	$$ \widehat{\Phi}(x) := \int_{\Mat_2(\A)} \Phi(y) \psi(-\Tr(xy)) \ud x, $$
	viewing $\Mat_2(\A)$ simply as $\A^4$. From this point, it is clear that the Godement--Jacquet theory is a precise generalization of Tate's thesis, which preserves the local-global nature. In particular, the corresponding weight distribution (analogue of (\ref{TateWt}))
\begin{equation} \label{GoJWt}
	\Psi \mapsto \Wt(s, \Psi, \beta) := \prod_{v \mid \infty} \Zeta_v(s, \Psi_v, \beta_v) \prod_{\vp < \infty} \frac{\Zeta_{\vp}(s, \Psi_{\vp}, \beta_{\vp})}{L_{\vp}(s,\pi_{\vp})}
\end{equation}
	realizes Godement--Jacquet's zeta integrals as weighted $L$-functions for automorphic representations of $\GL_2(\A)$. The only thing left unclear for an exact analogue is the co-variance property of these tempered distributions along Weil's idea. This is made clear in the subsequent development of the Rankin--Selberg theory, which we will recall in the next subsection.

	\subsection{Rankin--Selberg theory}
	
	The key to find a theory of zeta integrals with co-variant distributions is the uniqueness of the Whittaker functional. 
\begin{definition}
	At $v \in V_{\bF}$, a $\psi_v$-Whittaker functional is a continuous functional $\ell_v$ on $V_{\pi_v}^{\infty}$ satisfying
	$$ \ell_v(\pi_v(n(x)).f) = \psi_v(x) \ell_v(f), \quad \forall f \in V_{\pi_v}^{\infty}, x \in \bF_v. $$
	A $\psi$-Whittaker functional is a continuous functional $\ell$ on $V_{\pi}^{\infty}$ satisfying
	$$ \ell(\pi(n(x)).f) = \psi(x) \ell(f), \quad \forall f \in V_{\pi}^{\infty}, x \in \A. $$
\end{definition}

\noindent It turns out that for every automorphic representation $\pi$, the Whittaker functional exists and is unique up to a scalar both locally and globally. This allows one to define the Whittaker function associated to a smooth vector $f$ by
\begin{align*}
W_f(g)=
\begin{cases}
\ell_v(\pi_v(g).f), \ \ & f\in V_{\pi_v}^{\infty},\\
\ell(\pi(g).f), & f \in V_{\pi}^{\infty}.
\end{cases}
\end{align*}

\begin{definition}
	As $f$ traverses $V_{\pi_v}^{\infty}$ (resp. $V_{\pi}^{\infty}$), the functions $g \mapsto W_f(g)$ form a subspace of 
	$$ \Cont^{\infty}(\gp{N}(\F_v) \backslash \GL_2(\F_v), \psi_v) := \left\{ W \in \Cont^{\infty}(\GL_2(\F_v)) \ \middle| \ W(n(x)gz(u)) = \psi_v(x) W(g), \forall x \in \F_v, u \in \F_v^{\times} \right\} $$	
	(resp. similarly defined $\Cont^{\infty}(\gp{N}(\A) \backslash \GL_2(\A), \psi)$). This subspace, equipped with the multiplication from right by $\GL_2(\bF_v)$ (resp. $\GL_2(\A)$) is isomorphic to $V_{\pi_v}^{\infty}$ (resp. $V_{\pi}^{\infty}$), and is called the $\psi_v$(resp. $\psi$)-Whittaker model of $\pi_v$ (resp. $\pi$), denoted by $\Whi(\pi_v, \psi_v)$ (resp. $\Whi(\pi,\psi)$).
\end{definition}

\begin{remark}
	For $R = \bF_v$ or $\A$, the map of restricting functions on $\GL_2(R)$ to $\gp{A}(R)$ is injective on the Whittaker models. This is the Kirilov conjecture, proved by Baruch \cite{Ba03}. The Kirillov models are
	$$ \Kir(\pi_v, \psi_v) = \left\{ K: \bF_v^{\times} \to \C \ \middle| \ K(t) = W(a(t)) \text{ for some } W \in \Whi(\pi_v, \psi_v) \right\}, $$
	$$ \Kir(\pi, \psi) = \left\{ K: \A^{\times} \to \C \ \middle| \ K(t) = W(a(t)) \text{ for some } W \in \Whi(\pi, \psi) \right\}. $$
	The local Kirillov models are equipped with natural $\GL_2(\bF_v)$-invariant pairings given by
	$$ \Pairing{K}{K} = \int_{\bF_v^{\times}} \norm[W(t)]^2 \ud^{\times}t. $$
	We call $K(t) = W(a(t)) = W_f(a(t))$ the Kirillov function of $W$ or $f$.
\end{remark}

Let $\varphi \in V_{\pi}^{\infty}$ be an automorphic form in an automorphic representation $\pi$. Its $\psi$-Whittaker function is given by an integral over a compact domain
	$$ W(g) = W_{\varphi}(g) = \int_{\bF \backslash \A} \varphi(n(x)g) \psi(-x) \ud x. $$
We deduce the Fourier-Whittaker expansion
	$$ \varphi(g) - \varphi_{\gp{N}}(g) = \sum_{\alpha \in \bF^{\times}} W(a(\alpha)g). $$
Suppose $\varphi$ is decomposable, then so is $W = \otimes_v' W_v$. Let $\chi = \otimes_v' \chi_v$ be a Hecke character. We assume $\pi$ to be cuspidal for simplicity. The slight generalization to the Eisenstein case is well-known to experts, and can be found in \cite[\S 2.1]{Wu19_S} as a special example in the theory of regularized integrals. We have the fundamental equations for the Rankin--Selberg zeta integrals

\begin{align}
	\Zeta(s, \varphi ,\chi) &:= \int_{\A^{\times}} W(a(t)) \chi(t) \norm[t]_{\A}^{s-1/2} \ud^{\times}t \nonumber \\
	&= \int_{\substack{t \in \A^{\times} \\ \norm[t]_{\A} \geq 1}} \varphi(a(t)) \chi(t) \norm[t]_{\A}^{s-1/2} \ud^{\times}t + \int_{\substack{t \in \A^{\times} \\ \norm[t]_{\A} \geq 1}} \varphi(a(t)w) \chi^{-1}(t) \norm[t]_{\A}^{1/2-s} \ud^{\times}t \label{RSIntG} \\
	&= \int_{\A^{\times}} W(a(t)w) \chi^{-1}(t) \norm[t]_{\A}^{1/2-s} \ud^{\times}t = \Zeta(1-s, \pi(w).\varphi, \chi^{-1}), \nonumber
\end{align}
\begin{equation} \label{RSIntD}
	\Zeta(s, \varphi, \chi) = L(s,\pi \times \chi) \cdot \prod_{v \mid \infty} \Zeta_v(s, W_v, \chi_v) \prod_{\vp < \infty} \frac{\Zeta_{\vp}(s, W_{\vp}, \chi_{\vp})}{L_{\vp}(s,\pi_{\vp} \times \chi_{\vp})},
\end{equation}
	together with the \emph{local theory} for the corresponding local zeta integrals (meromorphic continuation and local functional equations)
	$$ \Zeta_v(s, W_v, \chi_v) := \int_{\bF_v^{\times}} W_v(a(t)) \chi_v(t) \norm[t]_v^{s-1/2} \ud^{\times}t. $$
	Both $\varphi \mapsto \Zeta(s,\varphi,\chi)$ and the weight function
\begin{equation} \label{RSWt}
	\varphi \mapsto \Wt(s,\varphi,\chi) := \prod_{v \mid \infty} \Zeta_v(s, W_v, \chi_v) \prod_{\vp < \infty} \frac{\Zeta_{\vp}(s, W_{\vp}, \chi_{\vp})}{L_{\vp}(s,\pi_{\vp} \times \chi_{\vp})}
\end{equation}
	are continuous functionals in the one dimensional space $\Hom_{\A^{\times}}(V_{\pi}^{\infty}, \chi^{-1} \norm_{\A}^{-s})$, where $\A^{\times}$ is regarded as the subgroup $\gp{A}(\A)$ of $\GL_2(\A^{\times})$. If $\chi = \id$ is the trivial character, we always omit it from the notation, in which case it is expected that $L(s,\pi \times \id) = L(s, \pi)$ coincides with the $L$-function defined via Godement--Jacquet zeta integrals. In general, one expects $L(s, \pi \times \chi) = L(s, \pi \otimes \chi)$ as defined in the Godement--Jacquet theory for $\GL_2$ instead of $\PGL_2$. This is explained in the following remark.
	
\begin{remark}
	Intuitively, the Whittaker functions are ``generalized'' matrix coefficients by letting $e_1 \in V_{\pi_v^{\vee}}^{\infty}$ (resp. $V_{\pi^{\vee}}^{\infty}$) tend to $\ell_v$ (resp. $\ell$) in $\beta_v(e_1,w)$ (resp. $\beta(e_1,w)$). Hence it is reasonable to believe that, as $f$ traverses $V_{\pi}^{\infty}$ and $\Psi$ traverses $\Sch(\Mat_2(\A))$, the collection of the integrals
\begin{equation} \label{GoJIntVar}
	\Zeta(s, \Psi, W_f) := \int_{\GL_2(\A)} \Psi(g) W_f(g) \norm[\det g]_{\A}^{s+1/2} \ud g 
\end{equation}
	is essentially the same as the collection of the Godement--Jacquet zeta integrals. A tricky computation shows that the integral (\ref{GoJIntVar}) is the same as $\Zeta(s, \varphi, \chi)$ defined by (\ref{RSIntG}) for a smooth $\varphi \in V_{\pi}^{\infty}$. The details can be found in \cite[\S 3 \& 4 \& 9 \& 11]{JPS79}. It shows the equivalence between the Godement--Jacquet theory for $\GL_n$ and Rankin--Selberg theory for $\GL_n \times \GL_1$.
\end{remark}

\begin{remark}
	Note that $\Zeta_v(s, W_v, \chi_v)$ (resp. $\Zeta_v(s, W_v)$) depends only on the Kirillov function $K_v$ associated with $W_v$. We shall write it as $\Zeta_v(s, K_v, \chi_v)$ (resp. $\Zeta_v(s, K_v)$) as well.
\end{remark}

	The collected zeta integrals, as well as the corresponding local theories, provide powerful tools of meromorphic continuations.

\section{Motohashi's Formula: A Distributional Version}
\label{sec:Motohashiformula}
	
	Motohashi \cite{Mo93} discovered a beautiful equation relating the fourth moment of the Riemann zeta function to the cubic moment of modular $L$-functions for the full modular group $\SL_2(\Z)$, with certain generalization in \cite{BrM03}. For $w$ a sufficiently nice weight function, the original formula of Motohashi \cite{Mo93} takes the shape
\begin{align*}
	\int_{\R}|\zeta(\tfrac{1}{2}+it)|^4 w(t) \ud t=\sum_f L(\tfrac{1}{2},f)^3 \widecheck{w}(t_f) + (\textrm{CSC}),
\end{align*}
where the sum runs over all holomorphic/Maass forms $f$ with spectral parameter $t_f$ for the group $\SL_2(\Z)$, and $\widecheck{w}$ is a certain integral transform of $w$ given explicitly in terms of hypergeometric functions. The term (CSC) is the analogous contribution from Eisenstein series.
	
	We obtained a version of Motohashi's formula in \cite{Wu22}, in which the weighted $L$-values are realized as some integral representations in the same flavour as those recalled in the previous section. In particular, the weight functions $w(t)$ and $\widecheck{w}(t_f)$ are replaced by the corresponding weight distributions.
	
	Precisely, let $\Psi = \otimes_v' \Psi_v \in \Sch(\Mat_2(\A))$ be decomposable. Let $S$ be a set of places of $\bF$ containing the set $S_{\infty}$ of all archimedean places and $\vp \mid \Dis_{\bF}$, such that the test function $\Psi = \otimes_v' \Psi_v$ is decomposable, and 
\begin{itemize}
	\item $\Psi_v \in \Sch(\GL_2(\bF_v))$ is a Schwartz function on the group of invertible elements at each $v \mid \infty$,
	\item $\Psi_{\vp} = \id_{\Mat_2(\vo_{\vp})}$ for $\vp \notin S$.
\end{itemize}
	In what follows, $\Bas(\sigma)$ will denote any orthogonal basis of the underlying Hilbert space of a representation $\sigma$, which consist of smooth vectors.

	For any cuspidal automorphic representation $\pi = \otimes_v' \pi_v$ of $\PGL_2(\A)$, let $\Bas(\pi)$ be any orthogonal basis of $V_{\pi}^{\infty} \subset \intL_0^2([\PGL_2])$. The Petersson norm gives an isomorphism
	$$ V_{\pi}^{\infty} \to V_{\pi^{\vee}}^{\infty}, \quad \varphi \mapsto \left( \ell_{\varphi}: f \mapsto \Pairing{f}{\varphi} \right). $$
	Hence the set $\Bas(\pi^{\vee}) = \{ e^{\vee} := \Pairing{e}{e}^{-1} \overline{e} ~| ~e \in \Bas(\pi) \}$ can be naturally viewed as the dual basis of $\Bas(\pi)$ in $V_{\pi^{\vee}}^{\infty}$. We define an integral representation of $L(1/2,\pi)^3$
\begin{equation} \label{3rdMCuspDef}
	\Mt_3(\Psi \mid \pi) := \sum_{e_1,e_2 \in \Bas(\pi)} \Zeta( \tfrac{1}{2}, \Psi, \beta(e_2,e_1^{\vee})) \cdot \Zeta( \tfrac{1}{2}, e_1) \cdot \Zeta( \tfrac{1}{2}, e_2^{\vee}).
\end{equation}
	The local version at $v$ is similarly defined in the Kirillov model $\Kir(\pi_v, \psi_v)$ as
\begin{equation} \label{3rdMCuspLocDef}
	\Mt_{3,v}(\Psi_v \mid \pi_v) = \sum_{K_1,K_2 \in \Bas(\Kir(\pi_v,\psi_v))} \Zeta( \tfrac{1}{2}, \Psi_v, \beta(K_2, K_1^{\vee}) \cdot \Zeta \left( \tfrac{1}{2}, K_1 \right) \cdot \Zeta( \tfrac{1}{2}, K_2^{\vee}),
\end{equation}
	which give the relevant weight distribution
\begin{equation} \label{Wt3rdMCusp}
	\Wt_3(\Psi \mid \pi) = \prod_{v \text{ real}} \Mt_{3,v}(\Psi_v \mid \pi_v) \prod_{\vp \in S-S_{\infty}} \Mt_{3,\vp}(\Psi_{\vp} \mid \pi_{\vp}) \frac{L( 1, \pi_{\vp} \times \pi_{\vp})}{L( \tfrac{1}{2}, \pi_{\vp})^3}.
\end{equation}
	They are related by
\begin{equation} \label{CubWtCuspD}
	\Mt_3(\Psi \mid \pi) = \frac{L( \tfrac{1}{2},\pi)^3}{2 \Lambda_{\bF}(2) L(1,\pi,\mathrm{Ad})} \cdot \Wt_3(\Psi \mid \pi).
\end{equation}
	Here, $\Lambda_{\bF}(s)$ is the completed Dedekind zeta function, $L(1, \pi_{\vp} \times \pi_{\vp})$ is the local component of Rankin-Selberg $L$-functions for $\GL_2 \times \GL_2$, and $L(s,\pi,\mathrm{Ad})$ is $L$-function of the adjoint lift of $\pi$ to $\GL_3(\A)$. 
\begin{remark}	
	Note that the values $ L(1, \pi_{\vp} \times \pi_{\vp}), L(\tfrac{1}{2}, \pi_{\vp})$ are positive, and are $\asymp 1$ uniformly in $\Nr(\vp)$. Hence they can be ignored for the purpose of this paper.
\end{remark}

	Let $\pi$ be a continuous automorphic representation, whose elements are Eisenstein series constructed from the induced representations $\pi(\chi,s)$, parametrized by a unitary Hecke character $\chi$ and $s \in i\R$. Recall that the underlying Hilbert space $V_{\pi}$ is a induced model of $\pi(\chi,s)$. We define
\begin{equation} \label{3rdMEisDef}
	\Mt_3(\Psi \mid \chi,s) := \sum_{e_1,e_2 \in \Bas(\pi(\chi))} \Zeta( \tfrac{1}{2}, \Psi, \beta_s(e_2,e_1^{\vee})) \cdot \Zeta( \tfrac{1}{2}, e_{1,s}) \cdot \Zeta( \tfrac{1}{2}, e_{2,-s}^{\vee}).
\end{equation}
	At a place $v$, we write $K_{j,s}$ (resp. $K_{j,s}^{\vee}$) for the Whittaker function of the flat section $e_{j,s}$ (resp. $e_{j,s}^{\vee}$). The local version of (\ref{3rdMEisDef}) is given by
\begin{equation} \label{3rdMEisLocDef}
	\Mt_{3,v}(\Psi_v \mid \chi_v, s) = \sum_{e_1,e_2 \in \Bas(\pi(\chi_v))} \Zeta( \tfrac{1}{2}, \Psi_v, \beta_s(e_2, e_1^{\vee})) \cdot \Zeta( \tfrac{1}{2}, K_{1,s}) \cdot \Zeta( \tfrac{1}{2}, K_{2,-s}^{\vee}),
\end{equation}
	which gives the relevant weight distribution
\begin{equation} \label{Wt3rdMEis}
	\Wt_3(\Psi \mid \chi,s) = \prod_{v \text{ real}} \Mt_{3,v}(\Psi_v \mid \chi_v,s) \prod_{\vp \in S-S_{\infty}} \Mt_{3,\vp}(\Psi_{\vp} \mid \chi_{\vp},s) \frac{L( 1+2s, \chi_{\vp}^2) L( 1-2s, \chi_{\vp}^{-2})}{L ( \tfrac{1}{2}+s, \chi_{\vp})^3 L( \tfrac{1}{2}-s, \chi_{\vp})^3}.
\end{equation}
	They are related by
\begin{equation} \label{CubWtEisD}
	\Mt_3(\Psi \mid \chi, s) = \frac{L( \tfrac{1}{2}+s, \chi)^3 L( \tfrac{1}{2}-s, \chi^{-1})^3}{L(1+2s,\chi^2)L(1-2s, \chi^{-2})} \cdot \Wt_3(\Psi \mid \chi,s).
\end{equation}
	
	For a Hecke character $\chi = \otimes_v' \chi_v$, the integral representation of the fourth moment is a $4$-dimensional Tate's integral
\begin{equation} \label{4thMSinDef}
	\Mt_4(\Psi \mid \chi,s)  = \int_{(\A^{\times})^4} \Psi\Big(\Big( \begin{matrix} x_1 & x_2 \\ x_3 & x_4 \end{matrix} \Big)\Big) \chi \Big( \frac{x_1x_4}{x_2x_3} \Big)  \norm[x_1 x_4]_{\A}^{s} \norm[x_2 x_3]_{\A}^{1-s} \ud^{\times}x_1\ud^{\times}x_2\ud^{\times}x_3\ud^{\times}x_4.
\end{equation}
	The local version is defined by
\begin{equation} \label{4thMLocDef}
	\Mt_{4,v}(\Psi_v \mid \chi_v,s)  = \int_{(\bF_v^{\times})^4} \Psi_v\Big(\Big( \begin{matrix} x_1 & x_2 \\ x_3 & x_4 \end{matrix} \Big)\Big) \chi_v\Big( \frac{x_1x_4}{x_2x_3} \Big)\norm[x_1 x_4]_v^{s} \norm[x_2 x_3]_v^{1-s}  \ud^{\times}x_1\ud^{\times}x_2\ud^{\times}x_3\ud^{\times}x_4,
\end{equation}
	which give the relevant weight distribution
\begin{equation} \label{Wt4thM}
	\Wt_4(\Psi \mid \chi,s) = \prod_{v \text{ real}} \Mt_{4,v}(\Psi_v \mid \chi_v,s) \prod_{\vp \in S-S_{\infty}} \frac{\Mt_{4,\vp}(\Psi_{\vp} \mid \chi_{\vp},s)}{L( \tfrac{1}{2}+s, \chi_{\vp})^2 L( \tfrac{1}{2}-s, \chi_{\vp})^2}.
\end{equation}
	They are related by
\begin{equation} \label{4thMD}
	\Mt_4(\Psi \mid \chi, s) = L( \tfrac{1}{2}+s, \chi)^2 L( \tfrac{1}{2}-s, \chi^{-1})^2 \cdot \Wt_4(\Psi \mid \chi,s).
\end{equation}

\begin{remark}
	Note that the global (resp. local) distributions introduced in the last two paragraphs depend only on $\chi \norm_{\A}^s$ (resp. $\chi_v \norm_v^s$), instead of $(\chi,s)$ (resp. $(\chi_v,s)$) as two separate variables.
\end{remark}

	Finally, we define the tempered distributions $\Mt_3$ and $\Mt_4$ mimicking the cubic and fourth moments in Motohashi's original formula as
\begin{equation} \label{3rdMDef}
	\Mt_3(\Psi) := \sum_{\pi \text{ cuspidal}} \Mt_3(\Psi \mid \pi) + \sum_{\chi \in \widehat{\R_+ \bF^{\times} \backslash \A^{\times}}} \int_{-\infty}^{\infty} \Mt_3(\Psi \mid \chi,i\tau) \frac{\ud\tau}{4\pi},
\end{equation}
\begin{equation} \label{4thMDef}
	\Mt_4(\Psi) = \frac{1}{\zeta_{\bF}^*} \sum_{\chi \in \widehat{\R_+ \bF^{\times} \backslash \A^{\times}}} \int_{\Re s = \frac{1}{2}} \Mt_4(\Psi \mid \chi,s) \frac{\ud s}{2\pi i}.
\end{equation}
	
	The distributional version of Motohashi's formula in \cite{Wu22} is summarized as follows.
\begin{theorem}\label{thm:Motohashi}
	Let $\Psi \in \Sch(\Mat_2(\A))$ be a Schwartz function. We have an equation of tempered distributions
\begin{equation}
	\Mt_3(\Psi) + \Dt_3(\Psi) = \Mt_4(\Psi) + \Dt_4(\Psi),
\label{MTF}
\end{equation}
	where the degenerate terms $\Dt_3(\Psi), \Dt_4(\Psi)$ are defined by
	\begin{align}
		\Dt_3(\Psi)&= \frac{1}{\zeta_{\bF}^*} \Res_{s=\frac{1}{2}} \Mt_3(\Psi \mid \id, s),\label{DSF}\\
		\Dt_4(\Psi)&= \frac{1}{\zeta_{\bF}^*} \left\{ \Res_{s=1} \Mt_4(\Psi \mid \mathbbm{1},s) - \Res_{s=0} \Mt_4(\Psi \mid \mathbbm{1},s) \right\}.\label{DGF}
	\end{align}
\end{theorem}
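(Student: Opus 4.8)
\medskip

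\noindent\textbf{Proof proposal.} Equation \eqref{MTF} is an identity of tempered distributions, and the plan is to realise both sides as two expansions --- a \emph{spectral} and a \emph{geometric} one --- of a single regularised period attached to $\Psi$. For $s\in\C$ with $\Re s$ large, let $\rpR(\Psi,s):=\int_{\GL_2(\A)}\Psi(g)\,\norm[\det g]_{\A}^{s+1/2}\,\rpR(g)\,\ud g$ be the Godement--Jacquet convolution operator acting on $\intL^2(\PGL_2)$; it preserves the trivial central character, and its automorphic kernel on $[\PGL_2]\times[\PGL_2]$ is
\begin{align*}
	\mathbf{K}_{\Psi,s}(x,y)=\sum_{\gamma\in\GL_2(\bF)}\norm[\det(x^{-1}\gamma y)]_{\A}^{s+1/2}\int_{\A^{\times}}\Psi\big(z(u)\,x^{-1}\gamma y\big)\,\norm[u]_{\A}^{2s+1}\,\ud^{\times}u .
\end{align*}
Since the Rankin--Selberg integral $\Zeta(\tfrac{1}{2},\varphi)=\int_{\A^{\times}}W_{\varphi}(a(t))\,\ud^{\times}t$ unfolds to the Hecke period $\int_{\bF^{\times}\backslash\A^{\times}}(\varphi-\varphi_{\gp{N}})(a(t))\,\ud^{\times}t$, I would compute, in the regularised sense of \cite[\S 2.1]{Wu19_S},
\begin{align*}
	\mathcal{P}(\Psi):=\int_{\bF^{\times}\backslash\A^{\times}}^{\reg}\int_{\bF^{\times}\backslash\A^{\times}}^{\reg}\mathbf{K}_{\Psi,1/2}\big(a(t_1),a(t_2)\big)\,\ud^{\times}t_1\,\ud^{\times}t_2
\end{align*}
in two ways, keeping the parameter $s$ and continuing meromorphically to $s=\tfrac{1}{2}$ throughout.

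\emph{Spectral side.} Inserting the spectral decomposition of $\intL^2(\PGL_2)$ --- cuspidal $\oplus$ one-dimensional (residual) $\oplus$ continuous --- into $\mathbf{K}_{\Psi,s}$ and carrying out the two period integrals: for a cuspidal $\pi$ one expands $\rpR(\Psi,s).e_1$ in the basis $\Bas(\pi)$, recognises the coefficients $\Pairing{\rpR(\Psi,s).e_1}{e_2}$ as the Godement--Jacquet factors $\Zeta(s,\Psi,\beta(e_1,e_2))$ of \eqref{GJIntG}, and recovers at $s=\tfrac{1}{2}$ precisely $\Mt_3(\Psi\mid\pi)=\sum_{e_1,e_2}\Zeta(\tfrac{1}{2},\Psi,\beta(e_2,e_1^{\vee}))\,\Zeta(\tfrac{1}{2},e_1)\,\Zeta(\tfrac{1}{2},e_2^{\vee})$; the continuous part is handled identically through the Whittaker expansion of Eisenstein series and yields $\sum_{\chi}\int_{\R}\Mt_3(\Psi\mid\chi,i\tau)\,\tfrac{\ud\tau}{4\pi}$. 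The one-dimensional part, spanned by $\xi\circ\det$ with $\xi^2=\id$, fails to be integrable against the Hecke period; the regularisation replaces it by a residue term which is exactly $\tfrac{1}{\zeta_{\bF}^*}\Res_{s=1/2}\Mt_3(\Psi\mid\id,s)=\Dt_3(\Psi)$, coming from the pole at $s=\tfrac{1}{2}$ of the spherical Eisenstein series. Thus the spectral evaluation of $\mathcal{P}(\Psi)$ is $\Mt_3(\Psi)+\Dt_3(\Psi)$.

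\emph{Geometric side.} Alternatively, expand $\mathbf{K}_{\Psi,s}(a(t_1),a(t_2))$ by hand: writing the entries of $\gamma\in\GL_2(\bF)$ as $a,b,c,d$, the matrix argument of $\Psi$ has entries $ut_2t_1^{-1}a,\ ut_1^{-1}b,\ ut_2c,\ ud$, whose ``off-diagonal ratio'' $ad/(bc)$ lies in $\bF^{\times}$. Unfolding the two periods and then applying Mellin inversion (equivalently, adelic Poisson summation) on $\bF^{\times}\backslash\A^{\times}\simeq\R_{>0}\times(\bF^{\times}\backslash\A^{(1)})$ --- which reinstates the sum over Hecke characters $\chi$ and the line $\Re s=\tfrac{1}{2}$ --- the generic contribution is the $4$-dimensional Tate integral $\tfrac{1}{\zeta_{\bF}^*}\sum_{\chi}\int_{\Re s=1/2}\Mt_4(\Psi\mid\chi,s)\,\tfrac{\ud s}{2\pi i}=\Mt_4(\Psi)$, while the degenerate contribution (the locus $ad=bc$, i.e.\ the ``small cell''), equivalently the residues picked up when the $\chi=\id$ contour is moved past the simple poles of $\zeta_{\bF}$ at $s=1$ and $s=0$, equals $\tfrac{1}{\zeta_{\bF}^*}\big\{\Res_{s=1}-\Res_{s=0}\big\}\Mt_4(\Psi\mid\id,s)=\Dt_4(\Psi)$. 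Comparing the two evaluations of $\mathcal{P}(\Psi)$ gives \eqref{MTF}. (This is the computation of \cite{Wu22}, which one may also simply cite.)

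\emph{Main obstacle.} The substance is analytic bookkeeping: (i) $\mathbf{K}_{\Psi,s}$ and the various zeta integrals converge only for $\Re s$ in suitable half-planes, so every constituent must be meromorphically continued to $s=\tfrac{1}{2}$, using the local zeta-integral theory of Section~\ref{sec:representationtheory} to control the finitely many ramified Euler factors; (ii) the Hecke period diverges on the Eisenstein and residual spectra, which forces the regularised-integral machinery, and the delicate point --- the one I expect to be the crux --- is to show that this regularisation contributes \emph{exactly} $\Dt_3$ on the spectral side and $\Dt_4$ on the geometric side, i.e.\ to match the poles in $s$ of the two expansions with no spurious residues; (iii) interchanging the spectral sum/integral with the (regularised) period integrals, and justifying the unfolding, require uniform absolute-convergence estimates, after which restriction to decomposable test functions $\Psi=\otimes_v'\Psi_v$ (a dense family) promotes the resulting identity to the asserted equality of tempered distributions.
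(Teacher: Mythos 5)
Your sketch follows the Michel--Venkatesh/Nelson period route: evaluate a regularised double Hecke period of the Godement--Jacquet kernel spectrally and geometrically, and let the regularisation machinery account for the degenerate terms. The paper itself does not prove Theorem \ref{thm:Motohashi} this way --- it quotes it from \cite{Wu22}, where the proof is distributional (co-variant tempered distributions for the action of three tori on $\Mat_2(\A)$, with the degenerate terms identified via Godement sections as residues of the continuous-spectrum component). More importantly, the step you defer to ``analytic bookkeeping'' in your closing paragraph is precisely where the period approach breaks down, as Appendix \ref{sec:appendix} of this paper explains in detail: after subtracting the regularising Eisenstein series $\Reis$, the contribution $I^{\sharp}(s_0,s_1,s_2)$ of $\varphi_{\gp{N}}-\varphi_{\gp{N}}^{*}$ in \eqref{eq:ChDegPerM} is \emph{not} covered by the theory of regularised integrals, because the associated function on $\R_{>0}$ is not regularizable --- its asymptotics near $0$ is oscillatory, of the same nature as $tJ_{\nu}(t)$ at infinity, so no finite-function subtraction exists and the meromorphic continuation in $s_0$ does not come for free. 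Nelson \cite{Ne20} circumvents this only by imposing vanishing conditions (NVC) on the sections, under which $I^{\sharp}$ vanishes identically; those conditions fail for a general Schwartz $\Psi$ (and in particular for the test functions of \cite{BFW21+} and of this paper), whereas the theorem is asserted for all $\Psi\in\Sch(\Mat_2(\A))$. In \cite{Wu22} this term corresponds to the distribution $DS_4$, whose continuation is obtained by identifying it with a residue of $I^{M}(s_0,s_1,s_2\mid\id,s)$, not by regularised-integral arguments.

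So, taken as a proof, your proposal has a genuine gap: the claim that the regularisation contributes exactly $\Dt_3$ on the spectral side and exactly $\Dt_4$ on the geometric side, with no further obstruction, is the unresolved crux rather than routine bookkeeping, and your identification of $\Dt_4$ with residues picked up on moving the $\chi=\id$ contour silently assumes the continuation of $I^{\sharp}$ that the framework you invoke does not supply. Your parenthetical fallback --- simply citing \cite{Wu22} --- is in fact what the paper does; if you want a genuine period-style proof you would need either to verify an analogue of the NVC for the test functions at hand (they do not hold here) or to import the Godement-section/distributional identification of the degenerate terms from \cite[\S 7 Appendix]{Wu22}.
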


The distributions appearing in the above version of Motohashi's formula are co-variant according to a natural action of three tori $(\A^{\times})^3$ on $\Mat_2(\A)$. Note that the (intermediate) degenerate terms are regrouped with respect to this action as those co-variant distributions supported in the degenerate orbits (up to partial Fourier transforms), just as in Tate's fundamental equation (\ref{TateIntG}): $\Phi(0)$ and $\widehat{\Phi}(0)$ are supported in the degenerate orbit $\{ 0 \}$ for the multiplication of $\bF^{\times}$ on $\bF$, and are elements in $\Hom_{\A^{\times}}(\Sch(\A), \id)$ and $\Hom_{\A^{\times}}(\Sch(\A), \norm_{\A}^{-1})$ respectively.

\section{Local Computations}
\label{sec:localcomputations}

In this section we give the precise constructions of test functions and the relevant estimates required in Motohashi's formula. 
We work locally at a non-archimedean place, hence we will focus on the local field $\bF_\vp$ with the number field $\bF$ and a prime ideal $\vp$. But we omit the subscript $\vp$ for simplicity of notation, so that throughout this section $\bF$ will refer to $\bF_\vp$ with valuation ring $\vo$ and prime ideal $\vp$. We also fix $\varpi$ as a generator of $\vp$ such that $\vp = \varpi \vo$. Denote by $q$ the cardinality of residue field $\vo/\vp.$ All asymptotic relations in this section will be formulated as $q=|\vo/\vp|\rightarrow+\infty$ in the set of rational prime powers.
	Without loss of generality, we may assume the place $\vp$ does not divide $2$, since there are only finitely many places lying above $2$ given the number field, at which any trivial bound of the dual weight functions suffices for Theorem \ref{Main}. 

	The constructions of test functions are based on the following classifications of unitary irreducible representations $\pi$ with conductor exponents $\fa(\pi) \leq 4$. 
	\begin{itemize}
	\item {\it Case $0: \fa(\pi)=0$.} This is the spherical case. We use the test function at an unramified place and need not treat it in this section.
	
	\item{\it Case $1:\fa(\pi)=1$.} In this case, $\pi$ is either the Steinberg representation $\mathrm{St}$, or its twist $\mathrm{St}_{\xi}$ by the unique unramified quadratic character $\xi$. Namely, $\xi \mid_{\vo^{\times}} = \mathbbm{1}$ is trivial and $\xi(\varpi)=-1$.
	
	\item{\it Case $2:\fa(\pi)=2$.} If $\pi$ is not supercuspidal, then $\pi$ is either a twist $\mathrm{St}_{\xi}$ of the Steinberg representation or $\pi \simeq \pi(\xi, \xi^{-1})$, where the character $\xi$ has conductor exponent $\fa(\xi)=1$. If $\pi$ is supercuspidal, then it has depth $0$ and is constructed from a character of the group of invertible elements of the quadratic field extension of the residual field.
	
	\item{\it Case $3:\fa(\pi)=3$.} This pushes $\pi$ to be a \emph{simple supercuspidal representation}, details of whose construction and basic properties can be found in Knightly and Li \cite{KL15} or Luo, Pi and Wu \cite[Appendix A]{LPW23}.
	
	\item{\it Case $4:\fa(\pi)=4$.} Either $\pi$ is supercuspidal, or it is of the form $\pi \simeq \pi(\xi,\xi^{-1})$ for some character $\xi$ with conductor exponent $\cond(\xi)=2$. We exclude the first case in our main theorems (See Remark \ref{rmk:Hu-Petrow-Young}). The second case was already done in \cite{BFW21+}. So we do not need to treat this case.
	\end{itemize}

	For notational clarity, we name the focused representation as $\pi_0$. In each case, we will give an explicit test function $C \in \cC_{\mathrm{c}}^{\infty}(\GL_2(\bF))$ and estimate both the cubic moment weight function $\Mt_3(C \mid \pi)$ and its dual weight function $\Mt_4(C \mid \chi, \tfrac{1}{2})$ on the fourth moment side.

	\subsection{Case 1: $\fa(\pi)=1$}
	\label{sec:Case1}
	
	In this case we write $\pi_0 = \mathrm{St}_{\xi}$ and consider the test function
	$$ C(g) = \frac{1}{\Vol(\gp{K}_0[\vp])} \mathbbm{1}_{\gp{K}_0[\vp]}(g). $$
	
\begin{lemma}
Suppose $C \in \cC_{\mathrm{c}}^{\infty}(\GL_2(\bF))$ is chosen as above.
\begin{enumerate}[$(1)$]
\item For any irreducible admissible representation $\pi$ of $\PGL_2(\bF)$, the operator $\pi(C)$ is zero unless $\fa(\pi) \leq 1,$ in which case we have $\Mt_3(C \mid \pi) \geq 0.$ For unramified $\xi,$ we have 
	$$ \Mt_3(C \mid \mathrm{St}_{\xi}) \asymp 1. $$

\item The dual weight $\Mt_4(C \mid \chi) $ vanishes unless $\fa(\xi) = 0,$ in which case we have
	$$\Mt_4(C \mid \chi, \tfrac{1}{2}) \asymp q^{\frac{1}{2}}. $$
\end{enumerate}
\label{LocMainBd1}
\end{lemma}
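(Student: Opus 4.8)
The plan for part $(1)$ is to exploit that $C=\Vol(\gp{K}_0[\vp])^{-1}\mathbbm{1}_{\gp{K}_0[\vp]}$ is a self-adjoint idempotent of the Hecke algebra, so that for any unitary admissible $\pi$ the operator $P:=\pi(C)$ is the orthogonal projection onto the subspace $V_\pi^{\gp{K}_0[\vp]}$ of $\gp{K}_0[\vp]$-fixed vectors. Since $\gp{K}_1[\vp]\subset\gp{K}_0[\vp]$, that subspace sits inside $V_\pi^{\gp{K}_1[\vp]}$, which is trivial once $\fa(\pi)\geq2$; this gives the first assertion. For $\fa(\pi)\leq1$ I would substitute $\Psi=C$ into \eqref{3rdMCuspLocDef}: as $\norm[\det g]=1$ on $\gp{K}_0[\vp]$, the Godement--Jacquet factor $\Zeta(\tfrac{1}{2},C,\beta(K_2,K_1^\vee))$ reduces to $\Pairing{P K_2}{K_1^\vee}$. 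Choosing the orthogonal basis $\Bas(\Kir(\pi,\psi))$ so that its first $r=\dim V_\pi^{\gp{K}_0[\vp]}$ members span $V_\pi^{\gp{K}_0[\vp]}$ and the rest lie in the orthogonal complement, this pairing equals $1$ when $K_1=K_2$ is one of those first $r$ and vanishes otherwise, so the double sum collapses to
\[ \Mt_3(C\mid\pi)=\sum_{j=1}^{r}\Zeta(\tfrac{1}{2},K_j)\,\Zeta(\tfrac{1}{2},K_j^\vee)=\sum_{j=1}^{r}\frac{\norm[\Zeta(\tfrac{1}{2},K_j)]^2}{\Pairing{K_j}{K_j}}\geq0, \]
the second equality using $\pi^\vee\simeq\overline{\pi}$ together with the change of variables $t\mapsto-t$ to write $\Zeta(\tfrac{1}{2},K_j^\vee)=\Pairing{K_j}{K_j}^{-1}\overline{\Zeta(\tfrac{1}{2},K_j)}$.

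For $\pi=\mathrm{St}_\xi$ with $\xi$ unramified one has $r=1$, so by scale invariance $\Mt_3(C\mid\mathrm{St}_\xi)=\Pairing{W}{W}^{-1}\norm[\Zeta(\tfrac{1}{2},W)]^2$ for $W$ the new vector, whose Kirillov function is $W(a(\varpi^{n}))=\xi(\varpi)^{n}q^{-n}$ for $n\geq0$ and $0$ otherwise. This is where I would invoke the standard description of the Steinberg new vector, equivalently the identity $\Zeta(\tfrac{1}{2},W)=L(\tfrac{1}{2},\mathrm{St}_\xi)=(1-\xi(\varpi)q^{-1})^{-1}$; combined with $\Pairing{W}{W}\asymp\sum_{n\geq0}q^{-2n}\asymp1$ this yields $\Mt_3(C\mid\mathrm{St}_\xi)\asymp1$. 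Throughout one uses that converting \eqref{3rdMCuspLocDef} into the normalization of \eqref{Wt3rdMCusp}, and passing between the Kirillov and Hilbert inner products, contribute only factors $\asymp1$, and that the finitely many places where $\psi_\vp$ is ramified affect all the estimates by bounded factors only.

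For part $(2)$ the plan is to evaluate the $4$-dimensional local Tate integral \eqref{4thMLocDef} with $\Psi=C$ at $s=\tfrac{1}{2}$ directly. Writing the entries of a matrix in $\Mat_2$ as $x_1,x_2,x_3,x_4$ in the usual order, membership of $\gp{K}_0[\vp]$ forces $x_1,x_2,x_4\in\vo$, $x_3\in\vp$ and $x_1x_4-x_2x_3\in\vo^\times$; since $x_2x_3\in\vp$, this in turn forces $x_1x_4\in\vo^\times$, hence $x_1,x_4\in\vo^\times$, after which the determinant condition becomes automatic. The integral therefore factorizes completely:
\begin{multline*}
\Mt_4(C\mid\chi,\tfrac{1}{2})=\frac{1}{\Vol(\gp{K}_0[\vp])}\Big(\int_{\vo^\times}\chi(x)\,\ud^\times x\Big)^{2}\\
\times\Big(\int_{\vo}\chi^{-1}(x)\norm[x]^{1/2}\,\ud^\times x\Big)\Big(\int_{\vp}\chi^{-1}(x)\norm[x]^{1/2}\,\ud^\times x\Big).
\end{multline*}
The first factor vanishes unless $\chi$ is unramified, which is the vanishing assertion. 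When $\chi$ is unramified, $\int_{\vo^\times}\chi(x)\,\ud^\times x\asymp1$, summing the two remaining geometric series produces quantities $\asymp1$ and $\asymp q^{-1/2}$ respectively (uniformly over unitary $\chi$, as $\norm[\chi(\varpi)]=1$ keeps the denominators away from $0$), and $\Vol(\gp{K}_0[\vp])^{-1}\asymp[\gp{K}_\vp:\gp{K}_0[\vp]]=q+1\asymp q$; multiplying these out gives $\Mt_4(C\mid\chi,\tfrac{1}{2})\asymp q^{1/2}$.

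This lemma is essentially a warm-up, and I do not anticipate a genuine obstacle. The two points that require a little care are the identity $\Zeta(\tfrac{1}{2},K_j^\vee)=\Pairing{K_j}{K_j}^{-1}\overline{\Zeta(\tfrac{1}{2},K_j)}$ underlying the non-negativity in part $(1)$, which rests on the compatibility of the contragredient with the Whittaker and Kirillov models, and the bookkeeping of the various measure normalizations; both are controlled, up to factors $\asymp1$, by the remarks accompanying \eqref{Wt3rdMCusp}.
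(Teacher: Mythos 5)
Your proposal is correct and follows essentially the same route as the paper: part (1) rests on $\pi(C)$ being the projection onto $\gp{K}_0[\vp]$-fixed vectors and the explicit Kirillov function $K_\xi(t)=\xi(t)\norm[t]\mathbbm{1}_{\vo}(t)$ of the Steinberg newvector, and part (2) is the same direct evaluation of the local Tate integral over the domain $x_1,x_4\in\vo^\times$, $x_2\in\vo$, $x_3\in\vp$, with $\Vol(\gp{K}_0[\vp])^{-1}=q+1$. The only (harmless) deviation is that you prove the non-negativity of $\Mt_3(C\mid\pi)$ by a basis adapted to the projection, whereas the paper invokes that $C=\overline{C}*C$ is of positive type and cites \cite[Lemma 3.2]{BFW21+}; both arguments are valid.
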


\begin{proof}
(1) For any $\pi$, $\pi(C)$ is the orthogonal projection onto the subspace of $\gp{K}_0[\vp]$-invariant vectors. Its non-vanishing implies $\fa(\pi) \leq 1$. Since $C(g) = \overline{C}*C(g)$, where the convolution is defined in $\GL_2(g)$, it is of positive type. The proof of \cite[Lemma 3.2]{BFW21+} applies, and yields the non-negativity of $\Mt_3(C \mid \pi)$ for any $\pi$. For unramified $\xi$, we have $\fa(\mathrm{St}_{\xi})=1$. Let $K_{\xi}$ be the Kirillov function of a new vector in $\mathrm{St}_{\xi}$. We may assume
\begin{align*}
K_{\xi}(t) = \xi(t) \norm[t] \cdot \mathbbm{1}_{\vo}(t).
\end{align*}
Then the first part of the lemma follows from
\begin{align*}
\Mt_3(C \mid \mathrm{St}_{\xi}) 
= \Big|\int_{\bF^{\times}} W_{\xi}(t) \ud^{\times}t\Big|^2 \cdot \Big( \int_{\bF^{\times}} \extnorm{W_{\xi}(t)}^2 \ud^{\times}t \Big)^{-1} = \frac{1+\xi(\varpi)q^{-1}}{1-\xi(\varpi)q^{-1}}
\end{align*}
and $\xi(\varpi) = \pm 1$.
	
\noindent (2) Note that
\begin{align} \label{eq:LocDualWt1}
\Mt_4(\mathbbm{1}_{\gp{K}_0[\vp]} \mid \chi, \tfrac{1}{2}) &= \int_{\big(\begin{smallmatrix} x_1 & x_2 \\ x_3 & x_4 \end{smallmatrix}\big) \in \big(\begin{smallmatrix} \vo^{\times} & \vo \\ \vp & \vo^{\times} \end{smallmatrix}\big)} \chi \Big( \frac{x_1 x_4}{x_2 x_3} \Big)|x_1x_2x_3x_4|^{\frac{1}{2}}\ud^{\times}x_1\ud^{\times}x_2\ud^{\times}x_3\ud^{\times}x_4 \nonumber \\
&= \mathbbm{1}_{\fa(\chi)=0} \cdot \Big(\int_{\vo} \chi(x_2)^{-1} \norm[x_2]^{\frac{1}{2}} \ud^{\times}x_2\Big)\cdot \Big(\int_{\vp} \chi(x_3)^{-1} \norm[x_3]^{\frac{1}{2}} \ud^{\times}x_3\Big) \nonumber \\
&= \mathbbm{1}_{\fa(\chi)=0} \cdot \chi(\varpi)^{-1} q^{-\frac{1}{2}} \cdot (1-\chi(\varpi)^{-1}q^{-\frac{1}{2}})^{-2}.
\end{align}
The lemma follows immediately from $\Vol(\gp{K}_0[\vp])=(1+q)^{-1}$.
\end{proof}

\begin{lemma}\label{LocAuxBd1}
	We have
	$$ \Mt_3(C \mid \id, s) = \frac{2}{(1-q^{-\frac{1}{2}+s})(1-q^{-\frac{1}{2}-s})}, \quad \Mt_4( C \mid \id, \tfrac{1}{2} + s) = \frac{(q+1)q^{s-\frac{1}{2}}}{(1-q^{s-\frac{1}{2}})^2}. $$
\end{lemma}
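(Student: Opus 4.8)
The plan is to compute both quantities directly from their definitions, in the spirit of the proof of Lemma~\ref{LocMainBd1}.

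\emph{The dual weight.} For $\Mt_4(C \mid \id, \tfrac12+s)$ I would substitute $\Psi_\vp = C$ and $\chi_\vp = \id$ into the defining integral \eqref{4thMLocDef} evaluated at $\tfrac12+s$. The support condition $\big(\begin{smallmatrix} x_1 & x_2 \\ x_3 & x_4 \end{smallmatrix}\big) \in \gp{K}_0[\vp]$ forces $(x_1,x_2,x_3,x_4) \in \vo^\times \times \vo \times \vp \times \vo^\times$ (invertibility of the determinant being then automatic), and on this region $\norm[x_1 x_4] = 1$, so the fourfold integral factors into the one-dimensional Tate integrals $\int_\vo \norm[x_2]^{\frac12-s} \ud^\times x_2 = (1-q^{s-\frac12})^{-1}$ and $\int_\vp \norm[x_3]^{\frac12-s} \ud^\times x_3 = q^{s-\frac12}(1-q^{s-\frac12})^{-1}$. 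Dividing by $\Vol(\gp{K}_0[\vp]) = (q+1)^{-1}$ yields the asserted formula; this is the exact analogue of \eqref{eq:LocDualWt1} with trivial $\chi$.

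\emph{The cubic moment weight.} Here the starting point is that $C$ is, up to its normalising constant, the idempotent whose action $\pi(C)$ is the orthogonal projection onto the space of $\gp{K}_0[\vp]$-fixed vectors of $\pi(\id,s)$, which for the unramified principal series is two-dimensional. Consequently, in the sum \eqref{3rdMEisLocDef} defining $\Mt_{3,\vp}(C \mid \id,s)$ only the basis vectors from this two-dimensional subspace contribute, and — because $\norm[\det g] = 1$ on $\gp{K}_0[\vp]$ and integrating $C$ against a matrix coefficient merely averages it over $\gp{K}_0[\vp]$ — the factor $\Zeta(\tfrac12, C, \beta_s(e_2,e_1^\vee))$ reduces to the pairing of $e_1$ with $e_2$. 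Thus $\Mt_{3,\vp}(C \mid \id, s)$ collapses to a short sum of products $\Zeta(\tfrac12, K_{e,s})\,\Zeta(\tfrac12, K_{e,-s}^\vee)$ over an orthogonal basis $\{e\}$ of that two-dimensional space (divided by the corresponding Kirillov norms), exactly as in the evaluation of $\Mt_3(C \mid \mathrm{St}_\xi)$ in Lemma~\ref{LocMainBd1}(1). A convenient way to carry this out is to diagonalise the two-dimensional space by the Atkin--Lehner involution at $\vp$: the odd eigenvector has vanishing local Rankin--Selberg zeta integral, so only the even eigenvector — a combination of the spherical vector and its $\vp$-oldform translate — survives. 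Its zeta integral and its norm are read off from the explicit spherical Whittaker function of $\pi(\id,s)$ (Casselman--Shalika), and simplifying the resulting rational function of $q^{\pm s}$ gives $\Mt_3(C \mid \id, s) = 2\,(1-q^{-\frac12+s})^{-1}(1-q^{-\frac12-s})^{-1}$.

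\emph{Where the work lies.} I expect no conceptual obstacle; the delicate point is the bookkeeping of the interlocking normalisations in the $\Mt_3$ computation — the Haar probability measure on $\GL_2(\vo)$, the multiplicative measure in the Godement--Jacquet integral, the Whittaker/Kirillov normalisation of the flat sections, and the index $[\GL_2(\vo) : \gp{K}_0[\vp]] = q+1$, which enters both $\Vol(\gp{K}_0[\vp])$ and the norm of the Atkin--Lehner-even vector and must cancel cleanly — together with the final rational-function simplification. For the subsequent application only the simple pole of $\Mt_3(\Psi \mid \id, s)$ at $s = \tfrac12$ and its leading coefficient there are needed, through the degenerate term $\Dt_3$ in \eqref{DSF}, so the closed form above is a convenience rather than a necessity.
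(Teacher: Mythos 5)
Your proposal is correct in substance and, for the dual weight, identical to the paper: the paper also evaluates $\Mt_4(C\mid\id,\tfrac12+s)$ by the same factorization into the two one-dimensional Tate integrals over $\vo$ and $\vp$ (this is exactly its display \eqref{eq:LocDualWt1} with trivial $\chi$) and then divides by $\Vol(\gp{K}_0[\vp])=(1+q)^{-1}$. For the cubic-moment weight the two arguments are mild repackagings of one another. The paper keeps an orthonormal basis $e_0,e_1$ of $\pi(0)^{\gp{K}_0[\vp]}$ whose flat sections at $s$ and $-s$ are dual bases, changes basis to $\{e_0(s),\pi(s,a(\varpi^{-1}))e_0(s)\}$, and uses two inputs: (i) the spherical vector and its $a(\varpi^{-1})$-translate have the \emph{same} Whittaker zeta value at the central point with trivial twist, and (ii) the Gram matrix $\bigl(\begin{smallmatrix}1&\sigma_1(s)\\ \sigma_1(s)&1\end{smallmatrix}\bigr)$ of that basis, with $\sigma_1(s)$ given by Macdonald's formula \cite[Theorem 4.6.6]{Bu98}; the answer is then $\Zeta(\tfrac12,W_0(s))\Zeta(\tfrac12,W_0(-s))\cdot\tfrac{2}{1+\sigma_1(s)}$ together with the explicit unramified zeta value from \cite[Theorem 4.6.5]{Bu98}. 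Your Atkin--Lehner diagonalisation encodes exactly the same data: the vanishing of the odd eigenvector's zeta integral \emph{is} fact (i) in disguise, and the ``norm of the Atkin--Lehner-even vector'' that you relegate to bookkeeping is $2(1+\sigma_1(s))$, i.e.\ precisely where Macdonald's formula must be invoked — it is the one genuinely nontrivial input besides Casselman--Shalika, not a normalisation chase, so you should name it. Two small points to tighten: the eigenvectors of the Atkin--Lehner element inside $\pi(\id,s)^{\gp{K}_0[\vp]}$ are not flat sections of vectors of $\pi(\id,0)$, whereas the sum \eqref{3rdMEisLocDef} is written over flat sections of a basis at $s=0$; you therefore need the (easy, but worth stating) remark that $\Mt_{3,\vp}(C\mid\id,s)$ is unchanged if one replaces that basis by any basis of the two-dimensional fixed space in $\pi(\id,s)$ together with its dual basis in $\pi(\id,-s)$ — this is exactly what the paper's $P(s)P(-s)^{T}$ manipulation accomplishes. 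Also, the vanishing of the odd eigenvector's zeta integral holds only at the central point $\tfrac12$ with trivial twist (which is all that is needed here), and for nonunitary $s$ the ``orthogonality'' to be used is the duality pairing between $\pi(\id,s)$ and $\pi(\id,-s)$, not a Hermitian inner product. With these caveats your route delivers the same rational function of $q^{\pm s}$ as the paper's.
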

\begin{proof}
It suffices to prove the first formula since the second one follows from (\ref{eq:LocDualWt1}). For simplicity of notation, we write $\pi(s) = \pi(\norm^s, \norm^{-s})$. Let $e_0,e_1$ form an orthonormal basis of $\pi(0)^{\gp{K}_0[\vp]}$, so that the sets of flat sections $\{ e_0(s), e_1(s) \} \subset \pi(s)^{\gp{K}_0[\vp]}$ and $\{ e_0(-s), e_1(-s) \} \subset \pi(-s) = \pi(s)^{\vee}$ are dual bases to each other. Such a basis exists because the group action of $\pi(0)$ (on the induced model) is defined over $\R$. Let $W_j(s)$, resp. $W_j(-s)$ be the Whittaker function of $e_j(s)$, resp. $e_j(-s)$ with respect to $\psi$, resp. $\overline{\psi}$. Then we have by definition
		$$ \Mt_3(C \mid \id, s) = \Zeta(\tfrac{1}{2}, W_0(s)) \Zeta(\tfrac{1}{2}, W_0(-s)) + \Zeta(\tfrac{1}{2}, W_1(s)) \Zeta(\tfrac{1}{2}, W_1(-s)). $$
		
	Note that $e_0(s)$ and $\pi(s,a(\varpi^{-1}))e_0(s)$ also form a basis of $\pi(s)^{\gp{K}_0[\vp]}$. We introduce the matrix of base change
	$$ (e_0(s),~ e_1(s)) = (e_0(s), ~\pi(s,a(\varpi^{-1}))e_0(s)) P(s). $$
	Then we have the relation
	$$ P(s)^T \begin{pmatrix} 1 & \sigma_1(s) \\ \sigma_1(s) & 1 \end{pmatrix} P(-s) = I \quad \Longrightarrow \quad P(s)P(-s)^T = \begin{pmatrix} 1 & \sigma_1(s) \\ \sigma_1(s) & 1 \end{pmatrix}^*,$$
	where $g^* := (g^T)^{-1}$ and $\sigma_1(s) = \Pairing{\pi(s,a(\varpi))e_0(s)}{e_0(-s)}$ is given by the Macdonald formula \cite[Theorem 4.6.6]{Bu98} as
	$$ \sigma_1(s) = \frac{q^{-\frac{1}{2}}}{1+q^{-1}} (q^s + q^{-s}). $$
	By linearity, we infer
\begin{align*}
	\Mt_3(C \mid \id,s) &=(\Zeta(\tfrac{1}{2}, W_0(s)),~\Zeta( \tfrac{1}{2}, W_1(s)))(\Zeta( \tfrac{1}{2}, W_0(-s)),~\Zeta ( \tfrac{1}{2}, W_1(-s)) )^T\\
	&=(\Zeta(\tfrac{1}{2}, W_0(s)),~\Zeta( \tfrac{1}{2}, W_0(s))) P(s) P(-s)^T (\Zeta(\tfrac{1}{2}, W_0(-s)),~\Zeta( \tfrac{1}{2}, W_0(-s)))^T\\
	&= \Zeta(\tfrac{1}{2}, W_0(s)) \Zeta( \tfrac{1}{2}, W_0(-s)) \cdot (1,~1) \begin{pmatrix} 1 & \sigma_1(s) \\ \sigma_1(s) & 1 \end{pmatrix}^* (1,~1)^T.
\end{align*}
Then the lemma follows from the evaluation
	$$ \Zeta \left( \tfrac{1}{2}, W_0(s) \right) = \frac{1-q^{-1-2s}}{(1-q^{-\frac{1}{2}-s}) (1-q^{-\frac{1}{2}+s})} = \frac{1+q^{-\frac{1}{2}-s}}{1-q^{-\frac{1}{2}+s}},$$
	which is obtainable from \cite[Theorem 4.6.5]{Bu98}.
\end{proof}

\subsection{Case 2: Non-supercuspidal $\pi$ with $\fa(\pi)=2$}
\label{Case2NSSec}
	
We first consider non-supercuspidal $\pi_0$. Note that the case $\pi_0 \simeq \pi(\xi,\xi^{-1})$ was already included in \cite[\S 4]{BFW21+}. We recall the construction of the test function (with normalization)
\begin{align*}
C(g)=\frac{1}{\Vol(\gp{K}_0[\vp])} \phi_0(n(-\varpi^{-1})gn(\varpi)), \quad 
\phi_0 \Big(\begin{matrix} x_1 & x_2 \\ x_3 & x_4 \end{matrix}\Big) := \xi \Big( \frac{x_4}{x_1} \Big) \mathbbm{1}_{\gp{K}_0[\vp]} \Big(\begin{matrix} x_1 & x_2 \\ x_3 & x_4 \end{matrix}\Big).
\end{align*}
Note that if $\xi$ is quadratic, then the above function takes another form
	$$ C(g) = \frac{\xi(\det(g))}{\Vol(\gp{K}_0[\vp])} \mathbbm{1}_{\gp{K}_0[\vp]}(n(\varpi^{-1})gn(-\varpi^{-1})), $$
due to the transformation
	$$ \xi(x_1x_4-x_2x_3) = \xi(x_1x_4) = \xi \Big( \frac{x_4}{x_1} \Big), \quad \text{for } \Big(\begin{matrix} x_1 & x_2 \\ x_3 & x_4 \end{matrix}\Big)\in \gp{K}_0[\vp]. $$

For the quadratic character $\xi$, the assumption $\fa(\xi)=1$ is not essential. In fact, if $\fa(\xi)=n\in\{1,2\}$, replacing $\varpi^{-1}$ by $\varpi^{-n}$ and $\gp{K}_0[\vp]$ by $\gp{K}_0[\vp^n]$ gives the right test function. We shall give the relevant estimations in this slightly more general situation.

\begin{lemma}\label{LocMainBd2-1}
Suppose $C \in \cC_{\mathrm{c}}^{\infty}(\GL_2(\bF))$ is chosen as above and $n\in\{1,2\}.$
\begin{enumerate}[$(1)$]
\item For any irreducible admissible representation $\pi$ of $\PGL_2(\bF)$, the operator $\pi(C)$ is zero unless $\fa(\pi \otimes \xi) \leq n,$ in which case we have $\Mt_3(C \mid \pi) \geq 0$. Moreover, we have
	$$ \Mt_3(C \mid \mathrm{St}_{\xi}) \gg q^{-n}. $$
\item The dual weight $\Mt_4(C \mid \chi) $ vanishes unless $\fa(\chi) \leq n,$ in which case we have
	$$\Mt_4(C \mid \chi)\ll q^{-n}. $$
\end{enumerate}
\end{lemma}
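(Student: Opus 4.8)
The plan is to compute the operator $\pi(C)$ explicitly, read off from it both the vanishing and the positivity in (1), and then reduce the two surviving inequalities — the lower bound for $\Mt_3(C\mid\mathrm{St}_\xi)$ and the upper bound for the dual weight — to a primitive Gauss sum and to a congruence-class volume, respectively. Since $\xi$ is quadratic, for $k=\bigl(\begin{smallmatrix}a&b\\c&d\end{smallmatrix}\bigr)\in\gp{K}_0[\vp^n]$ one has $c\in\vp^n$, so $\det k\equiv ad\pmod{\vp^n}$ and $\xi(k_{22}/k_{11})=\xi(ad)=\xi(\det k)$; hence $\phi_0$ restricted to $\gp{K}_0[\vp^n]$ equals $(\xi\circ\det)\cdot\mathbbm{1}_{\gp{K}_0[\vp^n]}$, and $C$ is the $(\xi\circ\det)$-twist of the normalised indicator of $n(\varpi^{-n})\gp{K}_0[\vp^n]n(-\varpi^{-n})$. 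As twisting by $\xi\circ\det$ replaces $\pi$ by $\pi\otimes\xi$ and $\det$ is trivial on unipotents, I would get $\pi(C)=\pi(n(\varpi^{-n}))\,P'\,\pi(n(\varpi^{-n}))^{-1}$, where $P'$ is the orthogonal projection of $V_{\pi}$ onto $W:=(\pi\otimes\xi)^{\gp{K}_0[\vp^n]}$; thus $\pi(C)$ is itself an orthogonal projection and vanishes exactly when $W=0$, i.e. unless $\fa(\pi\otimes\xi)\le n$. The positivity then follows as in the proof of \cite[Lemma 3.2]{BFW21+}: unfolding \eqref{3rdMCuspLocDef} and \eqref{3rdMEisLocDef} and applying Parseval gives $\Mt_3(C\mid\pi)=\langle\pi(C)\mathbf{u}_{\pi},\mathbf{u}_{\pi}\rangle$, where $\mathbf{u}_{\pi}$ is the Riesz vector of $K\mapsto\Zeta(\tfrac12,K)=\int_{\bF^{\times}}K(t)\,\ud^{\times}t$, namely the constant function $1$ in the Kirillov model; since $\pi(C)$ is an orthogonal projection, $\Mt_3(C\mid\pi)=\|\pi(C)\mathbf{u}_{\pi}\|^{2}\ge 0$.

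For the lower bound take $\pi=\mathrm{St}_\xi$, so $\pi\otimes\xi=\mathrm{St}$ has conductor exponent $1\le n$ and $W\ne 0$; let $v_0\in W$ be the Steinberg new vector, whose Kirillov function in $\Kir(\mathrm{St}_\xi)$ is $\xi(t)\norm[t]\mathbbm{1}_{\vo}(t)$. Restricting the projection to the line $\bF v_0$ and using that $\pi(n(\varpi^{-n}))^{-1}\mathbf{u}_{\pi}$ is the Kirillov function $t\mapsto\psi(-\varpi^{-n}t)$, I would bound
$$\Mt_3(C\mid\mathrm{St}_\xi)=\bigl\|P'\pi(n(\varpi^{-n}))^{-1}\mathbf{u}_{\pi}\bigr\|^{2}\ \ge\ \frac{1}{\|v_0\|^{2}}\Bigl\lvert\int_{\vo\setminus\{0\}}\psi(-\varpi^{-n}t)\,\overline{\xi(t)}\,\norm[t]\,\ud^{\times}t\Bigr\rvert^{2}.$$
Decomposing the integral according to $\ord(t)$, each term with $\ord(t)\ge 1$ vanishes — its inner $\vo^{\times}$-integral is either a Gauss sum of mismatched conductor or the integral of a nontrivial character — so only $\ord(t)=0$ survives, leaving the primitive Gauss sum $\int_{\vo^{\times}}\psi(-\varpi^{-n}u)\overline{\xi(u)}\,\ud^{\times}u$ of modulus $\asymp q^{-n/2}$. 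Since $\|v_0\|^{2}=\sum_{k\ge 0}q^{-2k}\asymp 1$, this gives $\Mt_3(C\mid\mathrm{St}_\xi)\gg q^{-n}$.

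For part (2) I would evaluate $\Mt_4(C\mid\chi,\tfrac12)$ straight from \eqref{4thMLocDef} by parametrising the support of $C$, namely $n(\varpi^{-n})\gp{K}_0[\vp^n]n(-\varpi^{-n})$: writing $\bigl(\begin{smallmatrix}x_1&x_2\\x_3&x_4\end{smallmatrix}\bigr)=n(\varpi^{-n})kn(-\varpi^{-n})$ with $k\in\gp{K}_0[\vp^n]$ yields $x_3\in\vp^n$, $x_1,x_4\in\vo$, $x_2\in\varpi^{-n}(x_4-x_1)+\varpi^{-2n}x_3+\vo$, and $C=\xi(\det k)/\Vol(\gp{K}_0[\vp^n])$ with $\det k\equiv (x_1-\varpi^{-n}x_3)(x_4+\varpi^{-n}x_3)\pmod{\vp^n}$. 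Integrating in $x_2$ first, the base point of the $\vo$-translate has valuation $\ge -n$, so $x_2$ runs over some $z_0(1+\vp^{m})$ with $m\le n$, and $\int_{1+\vp^{m}}\chi(u)^{-1}\,\ud^{\times}u$ annihilates the integral unless $\fa(\chi)\le m\le n$; this is the asserted vanishing for $\fa(\chi)>n$. When $\fa(\chi)\le n$, assembling the surviving factors — the normalisation $\Vol(\gp{K}_0[\vp^n])^{-1}\asymp q^{n}$, the congruence volume $\Vol(1+\vp^{n})\asymp q^{-n}$ and the weight $\norm[x_2]^{1/2}=q^{n/2}$ from the $x_2$-integral, the factor $\int_{\vp^n}\norm[x_3]^{1/2}\,\ud^{\times}x_3\asymp q^{-n/2}$, and the orthogonality of $\xi$ over $\vo^{\times}$ arising from the $x_1$- and $x_4$-integrations — gives $\Mt_4(C\mid\chi,\tfrac12)\ll q^{-n}$.

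The delicate point is the lower bound in (1): one must be certain that $\pi(C)$ is a genuine orthogonal projection, which hinges on $C$ being a \emph{conjugate} (rather than a one-sided translate) of the $\gp{K}_0[\vp^n]$-indicator — with a one-sided translate $\mathbf{u}_{\pi}$ would be orthogonal to $W$ and $\Mt_3(C\mid\mathrm{St}_\xi)$ would vanish identically. Granted that, the square-root size $q^{-n/2}$ of the primitive Gauss sum is precisely what makes $\Mt_3(C\mid\mathrm{St}_\xi)$ of order $q^{-n}$ and not smaller. The remaining assertions — the two vanishing statements and the dual-weight bound — are routine if slightly lengthy local computations, and the passage from $n=1$ to $n\in\{1,2\}$ is harmless throughout.
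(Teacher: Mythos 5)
Part (1) of your proposal is correct and is in substance the paper's own route: the paper quotes \cite[Lemma 4.1]{BFW21+} for the projection/positivity statements and then proves the lower bound exactly as you do, by dropping to the single translated new vector of $\mathrm{St}_\xi$, whose Kirillov function is $\psi(\varpi^{-n}t)\xi(t)\norm[t]\mathbbm{1}_{\vo}(t)$, and evaluating the resulting Gauss sum of conductor $n$ (of modulus $\asymp q^{-n/2}$), which yields $q^{-n}(1+q^{-1})$. Two minor caveats: for $n=1$ the lemma also allows non-quadratic $\xi$ of conductor $1$, where $\phi_0$ is not $(\xi\circ\det)\mathbbm{1}_{\gp{K}_0[\vp]}$ and $\pi(C)$ is an isotypic projector rather than a fixed-vector projector; and your $\mathbf{u}_\pi$ is only a distribution vector, so the identity $\Mt_3(C\mid\pi)=\lVert\pi(C)\mathbf{u}_\pi\rVert^2$ should be run through the orthogonal-basis argument of the cited lemma. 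Neither affects the estimates.

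Part (2) has a genuine gap at the last step. The vanishing for $\fa(\chi)>n$ is fine, but the bound $\Mt_4(C\mid\chi,\tfrac12)\ll q^{-n}$ does not follow from your factor count. Multiplying the normalisation $q^{n}$, the $x_3$-factor $q^{-n/2}$, and the $x_2$-factor $q^{n/2}\Vol(1+\vp^{n})\asymp q^{-n/2}$ (generic stratum $\norm[z_0]=q^{n}$) leaves $O(1)$ before the $x_1,x_4$-integrations, whose trivial contribution is also $O(1)$; so those two integrations must jointly supply a factor $q^{-n}$, i.e.\ square-root cancellation $q^{-n/2}$ in each variable. Moreover in the degenerate stratum $z_0\in\vo$ (a set of measure $\asymp q^{-n}$ in $x_4$) the trivial count still only gives $q^{-n/2}$. ``Orthogonality of $\xi$ over $\vo^\times$'' cannot produce this: once the $x_2$-integral is performed, the remaining integrand depends on $x_1$ precisely modulo $\vp^{n}$ --- through $\chi(x_1)$, through $\chi^{-1}(z_0)$, and through which coset $z_0(1+\vp^{m})$ the variable $x_2$ runs over, since $z_0=\varpi^{-n}(x_4-x_1)+\varpi^{-2n}x_3$ --- which is exactly the scale on which $\xi(x_1-\varpi^{-n}x_3)$ oscillates, so there is no pure character integrated against a constant. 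What one actually faces is a complete Jacobi/Gauss-type character sum modulo $\vp^{n}$ in $x_1$ (and another in $x_4$), roughly of the shape $\sum_{x}\xi(x-t)\chi(x)\chi^{-1}(u-x)$, and the claimed bound requires square-root cancellation in these sums (Weil bounds for $n=1$, an elementary evaluation of Gauss sums modulo $\vp^2$ for $n=2$), together with a separate treatment of the degenerate strata. This is precisely the content of \cite[Corollary 4.8]{BFW21+}, which the paper cites for part (2). Note the bound is sharp --- for $\chi=\id$ and $n=1$ the exact formula in Lemma \ref{LocAuxBd2-1} gives size $\asymp q^{-1}$ --- so there is no soft volume/positivity route around the character-sum input; omitting it loses a factor as large as $q^{n}$, which would destroy the inequality $\Mt_4\ll\Mt_3$ needed in Proposition \ref{prop:M4-M3-locally} and hence the Weyl exponent.
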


\begin{proof}
All assertions are contained in \cite[Lemma 4.1 \& Corollary 4.8]{BFW21+}, except for the bound of $\Mt_3(C \mid \mathrm{St}_{\xi})$ which seems to be missing in the current proof of \cite[Lemma 4.1]{BFW21+}. We provide this simple verification as follows. Let $e_{\xi}$ be a new vector of $\mathrm{St}_{\xi}$ with Kirillov function $W_{\xi}(t) = \xi(t) \norm[t] \mathbbm{1}_{\vo}(t)$. Then we get, with the same argument as given in the proof of \cite[Lemma 4.1]{BFW21+}, the following bound
\begin{align*}
\Mt_3(C \mid \mathrm{St}_{\xi})
\geq\Big|\int_{\bF^{\times}} W_{\xi}(y) \psi(\varpi^{-n}y) \ud^{\times}y\Big|^2 \cdot \Big( \int_{\bF^{\times}} \extnorm{W_{\xi}(y)}^2 \ud^{\times}y \Big)^{-1} = q^{-n} (1+q^{-1}), 
\end{align*}
which is precisely of the desired form.
\end{proof}

\begin{lemma} \label{LocAuxBd2-1}
	Let $\xi$ be quadratic with $\fa(\xi)=1$, i.e., $\vp \nmid 2$. Then we have
\begin{align*}
	\Mt_4(C \mid \id, \tfrac{1}{2} + s)
	&= \zeta_{\vp}(1)(q+1)q^{-3}\sum_{\pm} \Bigg\{\frac{q^2\cdot q^{-(1\pm 2s)}}{( 1-q^{-(\frac{1}{2} \pm s)})^2}
	+\frac{2 \zeta_{\vp}(1) q\cdot q^{-(1\pm 2s)}}{1-q^{-(\frac{1}{2}\pm s)}}+\xi(\pm1)\Bigg\}.
\end{align*}
\end{lemma}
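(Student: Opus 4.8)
The plan is to compute the four‑dimensional local Tate integral $(\ref{4thMLocDef})$ head‑on. Taking $\chi=\id$, replacing $s$ by $\tfrac{1}{2}+s$ there, and writing $g=\big(\begin{smallmatrix} x_1 & x_2 \\ x_3 & x_4 \end{smallmatrix}\big)$, one has
\[ \Mt_4(C\mid\id,\tfrac{1}{2}+s)=\int_{(\bF^{\times})^4}C(g)\,\norm[x_1x_4]^{\frac{1}{2}+s}\,\norm[x_2x_3]^{\frac{1}{2}-s}\,\ud^{\times}x_1\,\ud^{\times}x_2\,\ud^{\times}x_3\,\ud^{\times}x_4. \]
I would first insert the explicit test function. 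Since $\xi$ is quadratic with $\fa(\xi)=1$, it is convenient to use the form $C(g)=(q+1)\,\xi(\det g)\,\mathbbm{1}_{\gp{K}_0[\vp]}\big(n(\varpi^{-1})gn(-\varpi^{-1})\big)$, recalling $\Vol(\gp{K}_0[\vp])^{-1}=q+1$. Converting each $\ud^{\times}x_i$ to additive Haar measure via $\ud^{\times}x=\zeta_{\vp}(1)\,\ud x/\norm[x]$ produces a factor $\zeta_{\vp}(1)^{4}$ and, since $\norm[x_1x_2x_3x_4]=\norm[x_1x_4]\cdot\norm[x_2x_3]$, reduces the weight to $\norm[x_1x_4]^{s-\frac{1}{2}}\,\norm[x_2x_3]^{-s-\frac{1}{2}}$.

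Next I would undo the unipotent conjugation. Setting $g=n(-\varpi^{-1})\,h\,n(\varpi^{-1})$ with $h=\big(\begin{smallmatrix} y_1 & y_2 \\ y_3 & y_4 \end{smallmatrix}\big)$ running over the support $\gp{K}_0[\vp]$, the passage $h\mapsto g$, being left and right multiplication by unipotent (hence determinant‑one) matrices, is an $\bF$‑linear automorphism of $\Mat_2(\bF)$ of determinant one, so preserves additive Haar measure and $\prod_i\ud x_i=\prod_i\ud y_i$; explicitly $x_1=y_1-\varpi^{-1}y_3$, $x_3=y_3$, $x_4=y_4+\varpi^{-1}y_3$, $x_2=y_2+\varpi^{-1}(y_1-y_4)-\varpi^{-2}y_3$, and $\det g=\det h=y_1y_4-y_2y_3$. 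Because $y_2y_3\in\vp$ while $y_1y_4\in\vo^{\times}$ and $\xi$ is trivial on $1+\vp$, one may replace $\xi(\det g)$ by $\xi(y_1)\xi(y_4)$. I would also record the two identities governing the relevant valuations:
\[ x_1x_4-x_2x_3=y_1y_4-y_2y_3\in\vo^{\times},\qquad x_2x_3=y_3\big(\varpi^{-1}(y_1-y_4)-\varpi^{-2}y_3+y_2\big). \]

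The core of the argument is then a stratification of $h\in\gp{K}_0[\vp]$ by valuations. The first identity forces $\min\big(v(x_1x_4),v(x_2x_3)\big)=0$, so the weight collapses to $q^{\,v(x_2x_3)(\frac{1}{2}+s)}$ on the locus $v(x_1x_4)=0$ and to $q^{\,v(x_1x_4)(\frac{1}{2}-s)}$ on the locus $v(x_2x_3)=0$; these two loci are exchanged by $s\mapsto-s$ and are the source of the outer $\sum_{\pm}$. The resonant stratum is $v(y_3)=1$: only there can $x_1$ or $x_4$ leave $\vo^{\times}$ (with $v(x_1)$, resp.\ $v(x_4)$, then ranging over the positive integers), and only there can $v(x_2x_3)$ be made large (controlled by a resonance between $y_2$ and $y_1-y_4$). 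On each stratum one performs the $y_2$‑integral and the integrals over the unit parts of $y_1,y_4$; the remaining sums are geometric, two independent valuations becoming small producing the squared denominators $\big(1-q^{-(\frac{1}{2}\pm s)}\big)^{-2}$, the lower‑order strata producing the single denominators with coefficient $2\zeta_{\vp}(1)$, and the doubly degenerate corners — where a conjugate pair of entries is simultaneously non‑unit, forcing $y_1\equiv\pm y_4\bmod{\vp}$ and hence $\xi(y_1)\xi(y_4)=\xi(\pm1)$ — producing the terms $\xi(\pm1)$. Away from these loci the ramified character $\xi(y_1)\xi(y_4)$ averages to $0$, which is precisely what keeps the whole expression finite. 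Collecting the contributions and simplifying via $\zeta_{\vp}(1)=(1-q^{-1})^{-1}$ yields the stated closed form.

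I expect the main obstacle to be exactly this stratification. Conjugating $\mathbbm{1}_{\gp{K}_0[\vp]}$ by $n(\varpi^{\pm1})$ spreads out its support, so the integrand no longer factors as a product of one‑variable local factors and the answer cannot be read off a product of one‑dimensional zeta integrals; one must separate the generic strata (yielding the two geometric‑series terms in each summand) from the resonant stratum $v(y_3)=1$ and, inside it, the doubly degenerate corners (yielding $\xi(\pm1)$ rather than $0$), keeping the bookkeeping of the powers of $\zeta_{\vp}(1)$ and of $q$ correct throughout and checking that the two summands are genuinely interchanged by $s\mapsto-s$. A useful final sanity check is that, as $q\to\infty$, the main term is $\asymp q^{-1}$, consistent with the bound $\Mt_4(C\mid\chi)\ll q^{-1}$ of Lemma \ref{LocMainBd2-1}(2) in the case $n=1$.
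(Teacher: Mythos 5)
The paper's entire proof of this lemma is a one‑line citation: the formula is the $n=1$ specialization of the computation carried out right before \cite[(5.1)]{BFW21+}. You instead attempt a self‑contained derivation. The set‑up you describe is correct and does reproduce what one would find in \cite{BFW21+}: inserting $C$ in the form $(q+1)\,\xi(\det g)\mathbbm{1}_{\gp{K}_0[\vp]}(n(\varpi^{-1})gn(-\varpi^{-1}))$, passing to the additive Haar measure with a factor $\zeta_\vp(1)^4$, and substituting $g=n(-\varpi^{-1})hn(\varpi^{-1})$ with $h\in\gp{K}_0[\vp]$ — the Jacobian is $1$, the entry formulas $x_1=y_1-\varpi^{-1}y_3$, $x_3=y_3$, $x_4=y_4+\varpi^{-1}y_3$, $x_2=y_2+\varpi^{-1}(y_1-y_4)-\varpi^{-2}y_3$ are right, and the reduction $\xi(\det g)=\xi(y_1)\xi(y_4)$ on the support is valid since $\fa(\xi)=1$ and $y_2y_3\in\vp$.

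However, from that point on the argument is a plan, not a proof, and there is a genuine gap. Everything after ``The core of the argument is then a stratification of $h\in\gp{K}_0[\vp]$ by valuations'' is asserted, not executed: the $y_2$‑integrals, the geometric series, the precise identification of which stratum yields the squared denominators, which yields the coefficient $2\zeta_\vp(1)$, and how the prefactor $\zeta_\vp(1)(q+1)q^{-3}$ and the inner $q^2$ and $q$ factors emerge are all left as claims. ``Collecting the contributions and simplifying\ldots\ yields the stated closed form'' is exactly the part that needs to be demonstrated. Two of your structural claims also need justification before the plan can be trusted to give the right coefficients: (a) the loci $v(x_1x_4)=0$ and $v(x_2x_3)=0$ overlap (the intersection is exactly the generic locus), and you say they ``are exchanged by $s\mapsto -s$'' — but there is no manifest involution of the support $n(-\varpi^{-1})\gp{K}_0[\vp]n(\varpi^{-1})$ interchanging $x_1x_4$ and $x_2x_3$, so this symmetry of the integrand is not obvious and the $\sum_{\pm}$ structure of the answer cannot simply be read off from it; (b) the assertion that the ``doubly degenerate corners'' force $y_1\equiv\pm y_4\bmod\vp$ and produce exactly the constants $\xi(\pm 1)$ is a non‑trivial computation that is here only gestured at. Your sanity check at $q\to\infty$ is consistent but tests only the leading order and not, e.g., the $\xi(\pm 1)$ or $2\zeta_\vp(1)$ terms, which is precisely where a bookkeeping error would hide. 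In short: the approach (direct evaluation of the four‑fold Tate integral, unipotent change of variables, stratification) is the right one and is in the same spirit as the cited computation, but as written the proposal does not establish the formula; to close the gap you would need to carry out the valuation stratification explicitly and verify each coefficient, or cite \cite{BFW21+} as the paper does.
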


\begin{proof}
	This is a special case of the computation right before \cite[(5.1)]{BFW21+}.
\end{proof}

\subsection{Case 2: Supercuspidal $\pi$ with $\fa(\pi)=2$}
	
	Let $\bE$ be the unique unramified quadratic field extension of $\bF$ with ring of integers $\vo_{\bE}$ and prime ideal $\vp_{\bE}$. There is a character $\theta$ of $\bE^{\times}$ with conductor exponent $\fa(\theta)=1$, which is trivial on $\bF^{\times}$, such that $\pi_0 \simeq \pi(\theta)$, whose construction is recalled as follows (see \cite[\S 5.2.4]{FMP17}).
	
	Let $k_{\bE}$ and $k_{\bF}$ be the residue fields of $\bE$ and $\bF$, respectively, so that $k_{\bE} \simeq \mathbb{F}_{q^2}$ and $k_{\bF} \simeq \mathbb{F}_q$ as finite fields. Since $\vo_{\bE}^{\times}/\vo^{\times}(1+\vp_{\bE}) \simeq k_{\bE}^{\times} / k_{\bF}^{\times}$, the character $\theta$ is inflated from a character $\theta^{\flat}$ of $k_{\bE}^{\times} / k_{\bF}^{\times}$. We require $\theta^{\flat}$ to be regular, i.e., $\theta^{\flat} \neq (\theta^{\flat})^{\iota}$ for the action of the non-trivial element in $\mathrm{Gal}(k_{\bE}/k_{\bF})$ written by $x \mapsto x^{\iota}$. There is a cuspidal representation $\pi(\theta^{\flat})$ of $\GL_2(k_{\bF})$ of dimension $q-1$ constructed from and parametrized by $\theta^{\flat}$, whose character $\chi_{\theta^{\flat}}$ is given by \cite[(6.4.1)]{BuH06}
	\begin{equation}
	\chi_{\theta^{\flat}}(g) =
	\begin{cases}
	 q-1, & \text{if }g\sim \gp{Z}(k_{\bF}), \\
	 - 1, & \text{if } g\sim \gp{Z}(k_{\bF})\gp{N}(k_{\bF})-\gp{Z}(k_{\bF}), \\
	 - \theta^{\flat}(y) - \theta^{\flat}(y^{\iota}), & \text{if }g\sim k_{\bE}-k_{\bF}, \\
	 0, &\text{otherwise}.
	\end{cases}
\label{Dep0CharT}
\end{equation}
Here and in what follows, we adopt the convention $g\sim T$ to indicate that $g$ is conjugate with some element of the set $T$.

We regard $\pi(\theta^{\flat})$ as a representation of $\GL_2(\vo)$ by congruence modulo $\vp$, with extension to $\gp{Z}(\bF) \GL_2(\vo)$ by triviality on $\gp{Z}(\bF)$. Then $\pi(\theta)$ is the compact induction from $\pi(\theta^{\flat})$:
	$$ \pi(\theta) := \cInd_{\gp{Z}(\bF) \GL_2(\vo)}^{\GL_2(\bF)} \pi(\theta^{\flat}). $$
For $\kappa \in \GL_2(\vo)$, write $[\kappa]$ for its image in $\GL_2(k_{\bF})$ under the map modulo $\vp$. Define
	$$ C_{\theta}(g) := (q-1) \mathbbm{1}_{\GL_2(\vo)}(g) \chi_{\theta^{\flat}}([g]). $$

\begin{lemma}
For any irreducible admissible representation $\pi$ of $\PGL_2(\bF)$, the operator
$$\pi(\overline{C_{\theta}}) := \int_{\GL_2(\bF)} \overline{C_{\theta}(g)} \pi(g) \ud g$$
is zero unless $\pi \simeq \pi(\theta)$. Moreover, $\pi(\theta)(\overline{C_{\theta}})$ is the orthogonal projection on the $\pi(\theta^{\flat})$-isotypic part for the action of $\GL_2(\vo)$, which is isomorphic to $\pi(\theta^{\flat})$.
\label{Dep0Family}
\end{lemma}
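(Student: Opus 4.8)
The plan is to reduce the statement to the ordinary character theory of the finite group $\GL_2(k_{\bF})$. Write $U := 1 + \varpi\Mat_2(\vo)$ for the principal congruence subgroup, a normal subgroup of $\GL_2(\vo)$ with $\GL_2(\vo)/U \cong \GL_2(k_{\bF})$, and normalise the Haar measure $\ud g$ on $\GL_2(\bF)$ so that $\Vol(\GL_2(\vo)) = 1$. Since $g\mapsto\chi_{\theta^{\flat}}([g])$ is inflated from $\GL_2(k_{\bF})$, the function $\overline{C_{\theta}}$ is supported on $\GL_2(\vo)$ and is $U$-bi-invariant; hence for any irreducible admissible $\pi$ of $\PGL_2(\bF)$ one has $\pi(\overline{C_{\theta}}) = \pi(\overline{C_{\theta}})\circ P_U$ with image inside $V_{\pi}^{U}$, where $P_U$ is the orthogonal projection onto the space $V_{\pi}^{U}$, finite-dimensional by admissibility and carrying an action $\bar\pi$ of $\GL_2(k_{\bF})$. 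Pushing $\ud g$ restricted to $\GL_2(\vo)$ forward along the reduction map $\GL_2(\vo)\to\GL_2(k_{\bF})$ gives the uniform probability measure, so, using $\dim\pi(\theta^{\flat}) = q-1$, I would obtain the identity
\[
	\pi(\overline{C_{\theta}}) \;=\; \bar\pi(e_{\theta^{\flat}})\circ P_U, \qquad e_{\theta^{\flat}} \;:=\; \frac{q-1}{|\GL_2(k_{\bF})|}\sum_{\gamma\in\GL_2(k_{\bF})}\overline{\chi_{\theta^{\flat}}(\gamma)}\,\gamma,
\]
where $e_{\theta^{\flat}}\in\C[\GL_2(k_{\bF})]$ is, by Schur orthogonality, the central idempotent cutting out the $\pi(\theta^{\flat})$-isotypic subspace. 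Consequently $\pi(\overline{C_{\theta}})$ is exactly the projection of $V_{\pi}$ onto the $\pi(\theta^{\flat})$-isotypic component of $\pi|_{\GL_2(\vo)}$, a subspace which automatically lies in $V_{\pi}^{U}$.

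Next I would check that this projection is orthogonal: it is idempotent by the displayed formula (since $e_{\theta^{\flat}}$ is idempotent and $P_U$ acts as the identity on $V_{\pi}^{U}$), and it is self-adjoint because $\overline{C_{\theta}}(g^{-1}) = \overline{\overline{C_{\theta}}(g)}$ — which follows from $\chi_{\theta^{\flat}}(\gamma^{-1}) = \overline{\chi_{\theta^{\flat}}(\gamma)}$ for the unitary representation $\pi(\theta^{\flat})$ — together with the unitarity of $\pi$; a self-adjoint idempotent is an orthogonal projection.

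It then remains to recognise when this projection is non-zero and to pin down the multiplicity. Because $\pi$ is a $\PGL_2$-representation and $\pi(\theta^{\flat})$ has trivial central character (as $\theta^{\flat}$ is trivial on $k_{\bF}^{\times}$), every $\GL_2(\vo)$-intertwiner $\pi(\theta^{\flat})\to\pi$ is automatically $\gp{Z}(\bF)\GL_2(\vo)$-equivariant, so $\Hom_{\GL_2(\vo)}(\pi(\theta^{\flat}),\pi) = \Hom_{\gp{Z}(\bF)\GL_2(\vo)}(\pi(\theta^{\flat}),\pi)$; Frobenius reciprocity for the compact induction $\pi(\theta) = \cInd_{\gp{Z}(\bF)\GL_2(\vo)}^{\GL_2(\bF)}\pi(\theta^{\flat})$ from the open subgroup $\gp{Z}(\bF)\GL_2(\vo)$ identifies the latter with $\Hom_{\GL_2(\bF)}(\pi(\theta),\pi)$. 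Since $\theta^{\flat}$ is regular, $\pi(\theta)$ is irreducible supercuspidal (see \cite[\S 5.2.4]{FMP17} and \cite{BuH06}), so by Schur's lemma this Hom-space vanishes unless $\pi\simeq\pi(\theta)$ and equals $\C$ when $\pi = \pi(\theta)$. Therefore $\pi(\overline{C_{\theta}}) = 0$ unless $\pi\simeq\pi(\theta)$, while for $\pi = \pi(\theta)$ the $\pi(\theta^{\flat})$-isotypic part of $\pi(\theta)|_{\GL_2(\vo)}$ occurs with multiplicity one — hence is isomorphic to $\pi(\theta^{\flat})$ — and $\pi(\theta)(\overline{C_{\theta}})$ is the orthogonal projection onto it, as claimed. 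I do not anticipate a genuine obstacle here: the only points requiring care are the Haar-normalisation bookkeeping that makes the constant $q-1$ in $C_{\theta}$ produce the idempotent $e_{\theta^{\flat}}$ with no stray factor, the (harmless) passage between $\GL_2$ and $\PGL_2$ via the central extension of $\pi(\theta^{\flat})$, and invoking correctly the standard irreducibility of the depth-zero compact induction $\cInd\,\pi(\theta^{\flat})$ for regular $\theta^{\flat}$.
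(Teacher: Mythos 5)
Your proof is correct and follows essentially the same route as the paper's (very terse) argument: Frobenius reciprocity for the compact induction $\pi(\theta)=\cInd_{\gp{Z}(\bF)\GL_2(\vo)}^{\GL_2(\bF)}\pi(\theta^{\flat})$ together with irreducibility gives multiplicity one, and the Peter--Weyl/central-idempotent description identifies $\pi(\overline{C_{\theta}})$ with the $\pi(\theta^{\flat})$-isotypic projector. You have simply spelled out the steps (the $U$-bi-invariance reduction, the identification of $\overline{C_{\theta}}$ with $e_{\theta^{\flat}}$, self-adjointness) that the paper leaves implicit in the phrase "easy consequences of the Peter--Weyl theorem."
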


\begin{proof}
By irreducibility and Frobenius reciprocity, the restriction of $\pi(\theta)$ to $\GL_2(\vo)$ contains $\pi(\theta^{\flat})$ with multiplicity one. Other assertions are then easy consequences of the Peter--Weyl theorem.
\end{proof}

The function $\vo \to \C^1, x \mapsto \psi(\varpi^{-1}x)$ can be viewed as a non-trivial character of $\vo/\vp = k_{\bF}$. All other non-trivial characters of $k_{\bF}$ are $x \mapsto \psi(\varpi^{-1}ux)$, which we denote by $\widetilde{\psi}_u(x)$, as $u$ runs over $\vo^{\times}/(1+\vp) = k_{\bF}^{\times}$. Observing the first two cases in (\ref{Dep0CharT}), we note
$$ \chi_{\theta^{\flat}}(n(x)) = -1 = \sum_{u \in k_{\bF}^{\times}}\widetilde{\psi}_u(x), \quad x \in k_{\bF}. $$
Consequently, there is an orthonormal basis $\{ e_u: u \in k_{\bF}^{\times}\}$ of $\pi(\theta^{\flat})$ characterized by
$$ \pi(\theta^{\flat})(n(x)).e_u = \widetilde{\psi}_u(x)e_u, \quad x \in k_{\bF}. $$
Moreover, we can choose $e_u$ so that
$$ e_u = \pi(\theta^{\flat})(a(u^{-1})).e_1, \quad a(u) = \Big(\begin{matrix} u & \\ & 1 \end{matrix}\Big). $$
We denote by $f_u \in \pi(\theta)$ the function with support in $\gp{Z}(\bF)\GL_2(\vo)$, and $f_u(\mathbbm{1}) = e_u$. Then the space generated by $\{f_u\}$ consists of all functions in $\pi(\theta)$ with support  in $\gp{Z}(\bF)\GL_2(\vo)$, i.e., the $\pi(\theta^{\flat})$-isotypic part in $\pi(\theta)$. If $K_u$ denotes the Kirillov function of $f_u$ with respect to the additive character $\psi$, then
$$ \Mt_3(\overline{C_{\theta}} \mid \pi(\theta)) = \sum_{u \in k_{\bF}^{\times}} |\Zeta(\tfrac{1}{2},K_u)|^2 \|K_u\|^{-2}. $$
	
\begin{lemma}
Let $\Pairing{\cdot}{\cdot}_{\flat}$ be the natural pairing between $\pi(\theta^{\flat})$ and its dual representation. Let $\{ e_u^{\vee}: u \in k_{\bF}^{\times}\}$ be the dual basis of $\{ e_u: u \in k_{\bF}^{\times}\}$. A non-trivial Whittaker functional on $\pi(\theta)$ is given by
$$ \ell(f) = \extPairing{\int_{\bF} f \Big( \Big(\begin{matrix} \varpi & \\ & 1 \end{matrix}\Big) \Big(\begin{matrix} 1 & x \\ & 1 \end{matrix}\Big)\Big) \psi(-x) \ud x }{e_1^{\vee}}_{\flat}. $$
\end{lemma}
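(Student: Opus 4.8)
The approach is the standard one for a Whittaker functional on a compactly induced supercuspidal: check that $\ell$ is well defined, that it is $\gp{N}(\bF)$-equivariant with eigencharacter $\psi$, and --- the only point with real content --- that it does not vanish identically. For well-definedness, recall that $f\in\pi(\theta)=\cInd_{\gp{Z}(\bF)\GL_2(\vo)}^{\GL_2(\bF)}\pi(\theta^{\flat})$ is locally constant and supported on finitely many cosets $\gp{Z}(\bF)\GL_2(\vo)\gamma$. Since $n(y)\in\gp{Z}(\bF)\GL_2(\vo)$ precisely when $y\in\vo$, and $a(\varpi)n(x)\big(a(\varpi)n(x_0)\big)^{-1}=n\big(\varpi(x-x_0)\big)$, each such coset meets $\{a(\varpi)n(x):x\in\bF\}$ in a single translate of the compact group $\varpi^{-1}\vo$ (or not at all); hence $x\mapsto f(a(\varpi)n(x))$ has compact support, the integral defining $\ell(f)$ is a finite sum, and $\ell$ is a well-defined linear functional. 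Equivariance is formal: applying the right translation $\pi(\theta)(n(y))$ and the change of variables $x\mapsto x-y$ shows $\ell(\pi(\theta)(n(y)).f)=\psi(y)\,\ell(f)$, so $\ell$ is a $\psi$-Whittaker functional as soon as it is non-zero.

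The substantive step is non-triviality. Note first that $\det(a(\varpi)n(x))$ has odd valuation, so $a(\varpi)n(x)\notin\gp{Z}(\bF)\GL_2(\vo)$ for every $x$; in particular $\ell$ annihilates the $\GL_2(\vo)$-isotypic subspace spanned by the $\{f_u\}$, and the role of the representative $a(\varpi)$ in the formula is exactly to place the integrand on a coset that supports non-zero vectors. To produce $f$ with $\ell(f)\neq0$, take $f_{\star}\in\pi(\theta)$ supported on $\gp{Z}(\bF)\GL_2(\vo)a(\varpi)$ with $f_{\star}\big(z\kappa\,a(\varpi)\big)=\pi(\theta^{\flat})([\kappa]).e_1$, which is well defined because $\pi(\theta^{\flat})$ has trivial central character. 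Using $a(\varpi)n(x)=n(\varpi x)a(\varpi)$, the integrand $f_{\star}(a(\varpi)n(x))$ is supported on $x\in\varpi^{-1}\vo$, where it equals $\widetilde{\psi}_1(\overline{\varpi x})\,e_1$; since $\psi$ has conductor $\vo$ one has $\widetilde{\psi}_1(\overline{\varpi x})=\psi(x)$, which cancels the twist $\psi(-x)$, so
\[
\int_{\bF}f_{\star}\big(a(\varpi)n(x)\big)\psi(-x)\,\ud x=\Vol(\varpi^{-1}\vo)\,e_1,\qquad \ell(f_{\star})=\Vol(\varpi^{-1}\vo)\,\extPairing{e_1}{e_1^{\vee}}_{\flat}\neq0.
\]
Running the same computation with a general vector $\sum_u c_u e_u$ in place of $e_1$ returns $\Vol(\varpi^{-1}\vo)\,c_1 e_1$, by orthogonality of the additive characters $\widetilde{\psi}_u$ of $k_{\bF}$; this reconfirms non-triviality and explains why one pairs against $e_1^{\vee}$ in particular.

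I expect the only delicate point to be the bookkeeping in the non-triviality step: the three conventions --- the conductor of $\psi$, the reduction map $\GL_2(\vo)\to\GL_2(k_{\bF})$, and the choice of $a(\varpi)$ as coset representative --- must be kept consistent, since it is their interplay (not the formal $\gp{N}$-equivariance) that actually pins down the functional. Everything else is routine, and any convenient normalization of $\ud x$ makes the surviving constant explicit without affecting the statement.
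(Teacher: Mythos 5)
Your proof is correct and follows essentially the same route as the paper: the equivariance and well-definedness checks are the formal part, and your non-triviality witness $f_{\star}$ is exactly $\pi(\theta)(a(\varpi^{-1})).f_1$, so your computation is the $u=t=1$ case of the paper's evaluation $\ell(a(\varpi^{-1}t).f_u)=\mathbbm{1}_{u(1+\vp)}(t)\Vol(\vp^{-1},\ud x)$ carried out in the proof of the corollary that follows the lemma.
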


\begin{corollary}
A possible choice of $K_u$ is given in the Kirillov model by
\begin{align}\label{eq:Whittakerfunctional}
K_u(y) = \mathbbm{1}_{\varpi^{-1} u (1+\vp)}(y), \quad y \in \bF^{\times}. 
\end{align}
Consequently, we get
$$ \Mt_3(\overline{C_{\theta}} \mid \pi(\theta)) = (q-1) \Vol(1+\vp,\ud^{\times}y) = 1. $$
\end{corollary}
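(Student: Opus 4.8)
The plan is to first identify the Kirillov function $K_u$ explicitly from the Whittaker functional $\ell$ of the preceding lemma, and then to substitute it into the formula $\Mt_3(\overline{C_{\theta}}\mid\pi(\theta))=\sum_{u\in k_{\bF}^{\times}}\extnorm{\Zeta(\tfrac12,K_u)}^{2}\,\extNorm{K_u}^{-2}$ recorded above.

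For the first part, I would begin from $K_u(y)=W_{f_u}(a(y))=\ell\bigl(\pi(\theta)(a(y)).f_u\bigr)$, which by the definition of $\ell$ and of the right regular action on the compactly induced model of $\pi(\theta)$ equals
$$\extPairing{\int_{\bF}f_u\bigl(a(\varpi)n(x)a(y)\bigr)\psi(-x)\,\ud x}{e_1^{\vee}}_{\flat}.$$
One computes $a(\varpi)n(x)a(y)=\bigl(\begin{smallmatrix}\varpi y&\varpi x\\0&1\end{smallmatrix}\bigr)$, and a direct check shows this matrix lies in the support $\gp{Z}(\bF)\GL_2(\vo)$ of $f_u$ exactly when $\ord(y)=-1$ and $\varpi x\in\vo$. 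Writing $y=\varpi^{-1}y_0$ with $y_0\in\vo^{\times}$ and reducing modulo $\vp$, I would then invoke $f_u(\kappa)=\pi(\theta^{\flat})([\kappa]).e_u$ for $\kappa\in\GL_2(\vo)$, together with the identities $\pi(\theta^{\flat})(n(x)).e_u=\widetilde\psi_u(x)e_u$ and $\pi(\theta^{\flat})(a(v)).e_w=e_{wv^{-1}}$, to obtain $f_u\bigl(a(\varpi)n(x)a(y)\bigr)=\widetilde\psi_u\bigl([y_0]^{-1}[\varpi x]\bigr)\,e_{u[y_0]^{-1}}$. Pairing against $e_1^{\vee}$ retains only the term with $u[y_0]^{-1}=1$, i.e. $[y_0]=u$, which is equivalent to $y\in\varpi^{-1}u(1+\vp)$; on that coset the additive character in the remaining $x$-integral is trivial, so the integral equals the nonzero constant $\Vol(\varpi^{-1}\vo,\ud x)$, independent of $y$ and $u$. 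Hence $K_u$ is a constant multiple of $\mathbbm{1}_{\varpi^{-1}u(1+\vp)}$; since $\ell$, and therefore $K_u$, is determined only up to a common scalar, we may rescale to reach the stated normalization $K_u(y)=\mathbbm{1}_{\varpi^{-1}u(1+\vp)}(y)$.

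With this choice the second part is immediate: at $s=\tfrac12$ one has $\Zeta(\tfrac12,K_u)=\int_{\bF^{\times}}K_u(t)\,\ud^{\times}t=\Vol(\varpi^{-1}u(1+\vp),\ud^{\times}t)=\Vol(1+\vp,\ud^{\times}t)$ by multiplicative translation invariance, and likewise $\extNorm{K_u}^{2}=\Vol(1+\vp,\ud^{\times}t)$, so each summand equals $\Vol(1+\vp,\ud^{\times}t)$ and the sum over the $q-1$ classes $u\in k_{\bF}^{\times}$ gives $\Mt_3(\overline{C_{\theta}}\mid\pi(\theta))=(q-1)\Vol(1+\vp,\ud^{\times}t)$. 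Finally, from the normalization $\ud^{\times}t=\zeta_{\vp}(1)\,\ud t/\norm[t]$ together with $\Vol(\vo,\ud t)=1$ (which holds at the places considered here, away from $\Dif_{\bF}$) one gets $\Vol(1+\vp,\ud^{\times}t)=(1-q^{-1})^{-1}q^{-1}=(q-1)^{-1}$, yielding the value $1$.

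I expect the only real work to lie in the first part: keeping the compactly induced model of $\pi(\theta)$ straight, and tracking, after reduction modulo $\vp$, the combined effect of the unipotent translates (which produce the additive characters $\widetilde\psi_u$) and the torus translates (which permute the $e_u$), while determining exactly the range of $x$ for which $a(\varpi)n(x)a(y)$ remains inside the support $\gp{Z}(\bF)\GL_2(\vo)$ of $f_u$. The scalar ambiguity is precisely what licenses the clean normalization in the statement, and the measure point $\Vol(\vo,\ud t)=1$ is harmless since the finitely many $\vp\mid 2\Dif_{\bF}$ are handled by trivial bounds.
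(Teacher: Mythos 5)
Your proposal is correct and follows essentially the same route as the paper: you evaluate the Whittaker functional of the preceding lemma at $a(y).f_u$, find the support condition $\ord(y)=-1$, $x\in\vp^{-1}$ for the matrix $a(\varpi)n(x)a(y)$ to lie in $\gp{Z}(\bF)\GL_2(\vo)$, reduce modulo $\vp$ using the transformation properties of the basis $\{e_u\}$, and pair with $e_1^{\vee}$ to isolate the coset $\varpi^{-1}u(1+\vp)$, exactly as in the paper's computation of $\ell(a(\varpi^{-1}t).f_u)$. The remaining volume computation $(q-1)\Vol(1+\vp,\ud^{\times}y)=1$, including the measure normalization caveat away from $\Dif_{\bF}$, matches the paper's argument as well.
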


\begin{proof}
	It suffices to prove the corollary in order to verify the non-vanishing of the formula defining the possible Whittaker functional \eqref{eq:Whittakerfunctional}. For $n \in \Z$ and $t \in \vo^{\times}$, we have
	$$ \ell(a(\varpi^n t).f_u) = \extPairing{\int_{\bF} f_u \Big( \Big(\begin{matrix} \varpi & \\ & 1 \end{matrix}\Big) \Big(\begin{matrix} 1 & x \\ & 1 \end{matrix}\Big)\Big(\begin{matrix} \varpi^n t & \\ & 1 \end{matrix}\Big)\Big) \psi(-x) \ud x }{e_1^{\vee}}_{\flat}. $$
	The integrand vanishes unless
	$$ \Big(\begin{matrix} \varpi & \\ & 1 \end{matrix}\Big) \Big(\begin{matrix} 1 & x \\ & 1 \end{matrix}\Big)\Big(\begin{matrix} \varpi^n t & \\ & 1 \end{matrix}\Big)\in \gp{Z}(\bF) \GL_2(\vo). $$
	Comparing the determinant and the lower right entry, the above condition is equivalent to 
	$$ n=-1, \quad x \in \vp^{-1}. $$
	It follows that
\begin{align*}
	\ell(a(\varpi^{-1} t).f_u) &= \extPairing{\int_{\vp^{-1}} f_u \Big( \Big(\begin{matrix} 1 & \varpi x \\ & 1 \end{matrix}\Big)\Big(\begin{matrix} t & \\ & 1 \end{matrix}\Big) \Big)\psi(-x) \ud x }{e_1^{\vee}}_{\flat} \\
	&= \extPairing{\int_{\vp^{-1}} \psi(-x) \pi(\theta^{\flat})\Big( \Big(\begin{matrix} 1 & \varpi x \\ & 1 \end{matrix}\Big)\Big(\begin{matrix} t & \\ & 1 \end{matrix}\Big) \Big).e_u \ud x }{e_1^{\vee}}_{\flat} \\
	&= \extPairing{\int_{\vp^{-1}} \psi(-x) \pi(\theta^{\flat})\Big( \Big(\begin{matrix} 1 & \varpi x \\ & 1 \end{matrix}\Big)\Big).e_{t^{-1}u} \ud x }{e_1^{\vee}}_{\flat} \\
	&= \extPairing{\int_{\vp^{-1}} \psi(-x) \psi(t^{-1}ux) \ud x \cdot e_{t^{-1}u}}{e_1^{\vee}}_{\flat} \\
	&= \mathbbm{1}_{u(1+\vp)}(t) \Vol(\vp^{-1}, \ud x).
\end{align*}
	We conclude the formula for $K_u$ by rescaling by the constant $\Vol(\vp^{-1}, \ud x)^{-1}$.
\end{proof}

We turn to the study of $\Mt_4(\overline{C_{\theta}} \mid \chi, \tfrac{1}{2})$. We have by definition
$$ \Mt_4(\overline{C_{\theta}} \mid \chi, \tfrac{1}{2})
= (q-1) \zeta_{\bF}(1)^4
\int_{\GL_2(\vo)}\overline{\chi_{\theta^{\flat}}\Big(\Big(\begin{matrix} x_1&x_2\\x_3&x_4\end{matrix}\Big)\Big)}
\chi\Big( \frac{x_1 x_4}{x_2 x_3} \Big) \frac{\ud x_1\ud x_2\ud x_3\ud x_4}{|x_1x_2x_3x_4|^{\frac{1}{2}}}. $$
Consider the following decomposition
$$ \GL_2(k_{\bF}) = G^{\emptyset} \sqcup G^{\{1\}} \sqcup G^{\{2\}} \sqcup G^{\{3\}} \sqcup G^{\{4\}} \sqcup G^{\{1,4\}} \sqcup G^{\{2,3\}}, $$
where for each subset $I\subseteq\{1,2,3,4\},$ $G^I$ defined by
	\begin{align*}
	G^I=\Big\{\Big(\begin{matrix} a_1 & a_2 \\ a_3 & a_4 \end{matrix}\Big)\in \GL_2(k_{\bF})
:a_j = 0\Longleftrightarrow j\in I\Big\}.
	\end{align*}
	Let $\tilde{G}^I \subset \GL_2(\vo)$ be the pre-image of $G^I$ under the map of modulo $\vp$. Introduce
	$$ \Mt_4^I := (q-1) \zeta_{\bF}(1)^4 \int_{\tilde{G}^I} \overline{\chi_{\theta^{\flat}} \Big(\Big( \begin{matrix} x_1 & x_2 \\ x_3 & x_4 \end{matrix} \Big)\Big)} \chi \Big( \frac{x_1 x_4}{x_2 x_3} \Big) \frac{\ud x_1\ud x_2\ud x_3\ud x_4}{|x_1x_2x_3x_4|^{\frac{1}{2}}}. $$
	We get a decomposition
	$$ \Mt_4(\overline{C_{\theta}} \mid \chi) = \Mt_4^{\emptyset} + \Mt_4^{\{1\}} + \Mt_4^{\{2\}} + \Mt_4^{\{3\}} + \Mt_4^{\{4\}} + \Mt_4^{\{1,4\}} + \Mt_4^{\{2,3\}} . $$
	
\begin{lemma}
	We have $\Mt_4^{\{1\}} = \Mt_4^{\{4\}} = 0$.
\end{lemma}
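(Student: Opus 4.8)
The statement to prove is that $\Mt_4^{\{1\}} = \Mt_4^{\{4\}} = 0$, where $\Mt_4^I$ is the contribution to $\Mt_4(\overline{C_{\theta}} \mid \chi, \tfrac{1}{2})$ from the open subset $\tilde{G}^I \subset \GL_2(\vo)$ on which exactly the entries indexed by $I$ reduce to $0$ modulo $\vp$. The plan is to exploit the explicit form \eqref{Dep0CharT} of the cuspidal character $\chi_{\theta^{\flat}}$ together with the invariance properties of the integrand under multiplication by units. First I would observe that for $g = \big(\begin{smallmatrix} a_1 & a_2 \\ a_3 & a_4 \end{smallmatrix}\big) \in G^{\{1\}}$ we have $a_1 = 0$ and $a_2, a_3, a_4 \neq 0$ in $k_{\bF}$; the determinant is $-a_2 a_3 \neq 0$, so such $g$ is invertible, and I need to locate its conjugacy class among the four cases in \eqref{Dep0CharT}. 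Since $g$ has trace $a_4$ and $\det g = -a_2a_3$, its characteristic polynomial is $X^2 - a_4 X - a_2 a_3$; whether $g \sim \gp{Z}(k_\bF)\gp{N}(k_\bF)$ (unipotent-type), splits over $k_\bF$, or lies in $k_\bE - k_\bF$ depends on the discriminant $a_4^2 + 4a_2a_3$. In any case $\chi_{\theta^{\flat}}(g)$ never vanishes on $G^{\{1\}}$ only if we land in one of the first three cases, so the naive argument "$\chi_{\theta^{\flat}} \equiv 0$ on $G^{\{1\}}$" is not quite it.

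The correct mechanism, I expect, is a character/symmetry cancellation in the $\chi$-variable rather than vanishing of $\chi_{\theta^{\flat}}$. On $\tilde{G}^{\{1\}}$ we have $x_1 \in \vp$ (hence $|x_1| \le q^{-1}$) while $x_2, x_3, x_4 \in \vo^{\times}$; the integrand involves $\chi(x_1 x_4 / (x_2 x_3)) |x_1 x_2 x_3 x_4|^{-1/2} \overline{\chi_{\theta^{\flat}}([g])}$. Here $[g]$ depends only on the reductions $\bar{x}_2, \bar{x}_3, \bar{x}_4$ (since $\bar{x}_1 = 0$), so $\chi_{\theta^{\flat}}([g])$ is insensitive to the $(1+\vp)$-coset of each $x_j$ and, crucially, to $x_1$ entirely. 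Therefore the $x_1$-integral factors out as $\int_{\vp} \chi(x_1) |x_1|^{-1/2}\, \ud x_1$ times the rest. I would split this as a geometric sum over shells $x_1 \in \varpi^k \vo^{\times}$, $k \ge 1$: on each shell $\int_{\varpi^k \vo^\times} \chi(x_1) |x_1|^{-1/2} \ud x_1 = \chi(\varpi)^k q^{k/2} \zeta_\vp(1)^{-1} \int_{\vo^\times}\chi(x_1)\ud^\times x_1$, and the last integral vanishes unless $\fa(\chi) = 0$. If $\fa(\chi) \ge 1$ this kills everything immediately; if $\fa(\chi) = 0$ then $\chi$ is unramified with $\chi = \norm^{i\tau}$ (on $\bF^\times_\vp$), and I must look more carefully — but then $\chi_{\theta^\flat}$-orthogonality in the remaining variables should finish the job, since the unramified twist leaves $\chi_{\theta^{\flat}}$ as a genuinely cuspidal character which is orthogonal to the relevant induced pieces.

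For $\Mt_4^{\{4\}}$ the argument is symmetric with the roles of the top-left and bottom-right entries swapped: on $G^{\{4\}}$ we have $a_4 = 0$, $a_1, a_2, a_3 \neq 0$, $\det g = -a_2 a_3$, and $\chi_{\theta^\flat}([g])$ is independent of $x_4 \in \vp$, so the $x_4$-integral $\int_{\vp}\chi(x_4)^{-1}$ wait — the exponent on $x_4$ inside $\chi$ is $+1$ in $\chi(x_1x_4/(x_2x_3))$, so it is $\int_\vp \chi(x_4)|x_4|^{-1/2}\ud x_4$, which again vanishes unless $\fa(\chi)=0$, and one concludes as before. I expect \textbf{the main obstacle} to be the edge case $\fa(\chi) = 0$: there the brute-force unit-integral cancellation is unavailable, and one must instead invoke the orthogonality of the finite-field cuspidal character $\chi_{\theta^\flat}$ against characters of the form $[g] \mapsto (\text{unramified data})$ — concretely, that $\sum_{g \in G^{\{1\}}} \overline{\chi_{\theta^\flat}(g)}\,\widetilde{\chi}(a_1 a_4 / (a_2 a_3)) = 0$ for the residual character $\widetilde{\chi}$ induced by an unramified $\chi$ (which is trivial on $k_\bF^\times$, so this is just $\sum_{g \in G^{\{1\}}} \overline{\chi_{\theta^\flat}(g)}$). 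That this finite sum vanishes follows from the fact that $\chi_{\theta^\flat}$, being the character of an irreducible cuspidal representation of $\GL_2(k_\bF)$, is orthogonal to the indicator function of $G^{\{1\}}$, which is a union of conjugacy classes (being defined by a $\GL_2(k_\bF)$-conjugation-invariant condition on entry-vanishing — one should double-check this invariance, or replace $G^{\{1\}}$ by the union of conjugacy classes it spans and track the multiplicities). I would present the $\fa(\chi) \ge 1$ case in full and reduce the $\fa(\chi) = 0$ case to this orthogonality statement, which is where the representation theory of $\GL_2(k_\bF)$ does the real work.
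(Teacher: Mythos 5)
Your opening step is the same as the paper's: on $\tilde G^{\{1\}}$ the factor $\overline{\chi_{\theta^{\flat}}([g])}$ does not see $x_1\in\vp$, so the $x_1$-integral factors off as $\int_{\vp}\chi(x_1)\norm[x_1]^{\frac12}\,\ud^{\times}x_1$, which vanishes unless $\fa(\chi)=0$ (your displayed shell formula drops the Jacobian factor $q^{-k}$ coming from $\ud x_1$ on $\varpi^k\vo^{\times}$, which is why it looks divergent, but the conclusion is right), and symmetrically for $\Mt_4^{\{4\}}$ with the $x_4$-integral. So the ramified case is fine and matches the paper.

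The genuine gap is the unramified case, which is where the whole content of the lemma lies, and the mechanism you propose there does not work. The set $G^{\{1\}}$ is \emph{not} a union of conjugacy classes: conjugating $\big(\begin{smallmatrix}0&1\\1&1\end{smallmatrix}\big)$ by a generic element moves the zero out of the $(1,1)$-corner, so the condition ``$a_1=0$, other entries nonzero'' is not conjugation-invariant and no orthogonality of $\chi_{\theta^{\flat}}$ against its indicator function is available; your fallback ``replace $G^{\{1\}}$ by the classes it meets and track the multiplicities'' is not a reduction to representation theory but is exactly the computation that has to be carried out. The paper does precisely this: with $\kappa=\big(\begin{smallmatrix}0&a_2\\a_3&a_4\end{smallmatrix}\big)$, whose characteristic polynomial is $X^2-a_4X-a_2a_3$, it sorts the $(q-1)^3$ triples according to the discriminant $4^{-1}a_4^2+a_2a_3$, finds $(q-1)^2$ solutions of $4^{-1}a_4^2+a_2a_3=\beta^2\varepsilon$ for the elliptic part and $(q-1)^2$ solutions of $4^{-1}a_4^2+a_2a_3=0$ for the $\gp{Z}(k_{\bF})\gp{N}(k_{\bF})$-part, inserts the character values from (\ref{Dep0CharT}), and is left with evaluating the coset sum $\sum_{\beta\in k_{\bF}^{\times}}\theta^{\flat}(1+\beta\sqrt{\varepsilon})$ over $k_{\bE}^{\times}/k_{\bF}^{\times}$, producing two contributions $S_1,S_2$ each of size $(q-1)^2$ whose exact cancellation gives the lemma. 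Nothing softer can decide this: the outcome hinges on the precise value of that coset sum (which a priori involves $\theta^{\flat}(\sqrt{\varepsilon})$ --- compare the structurally identical computation of $\Mt_4^{\{1,4\}}$ in the paper, where the analogous sum does \emph{not} vanish and leaves a $\theta^{\flat}(\sqrt{\varepsilon})$ behind; indeed a careful coset count gives $\sum_{\beta\in k_{\bF}^{\times}}\theta^{\flat}(1+\beta\sqrt{\varepsilon})=-1-\theta^{\flat}(\sqrt{\varepsilon})$, so the cancellation is sensitive exactly to such boundary terms and in any case is far from a generic orthogonality phenomenon). As written, your proposal asserts the key finite-field identity without proof and with a justification (``cuspidal character orthogonal to induced pieces'') that does not apply, so the $\fa(\chi)=0$ case is left open.
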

\begin{proof}
	By definition, we have
\begin{align*}
	\Mt_4^{\{1\}} &= (q-1) \zeta_{\bF}(1)^3 \int_{(\vo^{\times})^3} \overline{\chi_{\theta^{\flat}} \Big(\Big( \begin{matrix}  & x_2 \\ x_3 & x_4 \end{matrix} \Big)\Big)} \chi \Big( \frac{x_4}{x_2 x_3} \Big) \ud x_2 \ud x_3 \ud x_4 \cdot \int_{\vp} \chi(x_1) \norm[x_1]^{\frac{1}{2}} \ud^{\times}x_1 \\
	&= (q-1) \zeta_{\bF}(1)^3 \mathbbm{1}_{\fa(\chi)=0} \frac{\chi(\varpi) q^{-\frac{1}{2}}}{1-\chi(\varpi) q^{-\frac{1}{2}}} \cdot q^{-3} \left( S_1 + S_2 \right),
\end{align*}
	where $S_1$ resp. $S_2$ is defined below according as $\kappa\sim k_{\bE}-k_{\bF}$ resp. $\kappa\sim \gp{Z}(k_{\bF})\gp{N}(k_{\bF}) - \gp{Z}(k_{\bF})$ with
	$$ \kappa := \Big(\begin{matrix} 0 & x_2 \\ x_3 & x_4 \end{matrix}\Big). $$
	Note that the non-vanishing condition implies $\chi(x_4/x_2x_3)=1$ in the above integrand. Note also that $\kappa$ satisfies the equation $\kappa^2 - x_4 \kappa - x_2x_3 = 0$. If $\kappa\sim k_{\bE}-k_{\bF}$, then $\kappa\sim 2^{-1}x_4+\beta \sqrt{\varepsilon}$ for some $\beta \in k_{\bF}^{\times}$ such that $\beta^2 \varepsilon \equiv 4^{-1}x_4^2 + x_2x_3 \pmod{\vp}$. Hence
\begin{align*}
	S_1 &:= -\sum_{\beta \in k_{\bF}^{\times}/\{ \pm 1 \}} \sum_{\substack{a_2,a_3,a_4 \in k_{\bF}^{\times} \\ 4^{-1}a_4^2 + a_2a_3 = \beta^2 \varepsilon}}( \theta^{\flat}(2^{-1}a_4 + \beta \sqrt{\varepsilon}) + \theta^{\flat}(2^{-1}a_4 - \beta \sqrt{\varepsilon})) \\
	&= - (q-1)^2 \sum_{\beta \in k_{\bF}^{\times}} \theta^{\flat}(1+\beta \sqrt{\varepsilon}) = (q-1)^2.
\end{align*}
While the condition that $\kappa\sim\gp{Z}(k_{\bF})\gp{N}(k_{\bF}) - \gp{Z}(k_{\bF})$ is equivalent to $4^{-1}x_4^2 + x_2x_3 \in \vp$, we deduce
\begin{align*}
S_2 &:= \sum_{\substack{a_2,a_3,a_4 \in k_{\bF}^{\times} \\ 4^{-1}a_4^2 + a_2a_3 = 0}} (-1) = -(q-1)^2.
\end{align*}
Thus $S_1+S_2 = 0$, and $\Mt_4^{\{1\}}=0$. The treatment to $\Mt_4^{\{4\}} $ is quite similar and is omitted here.
\end{proof}

\begin{lemma}
	We have
	$$ \Mt_4^{\{2\}} = \Mt_4^{\{3\}} = - \mathbbm{1}_{\fa(\chi)=0} \frac{\chi^{-1}(\varpi)q^{-\frac{1}{2}}}{1-\chi^{-1}(\varpi)q^{-\frac{1}{2}}}. $$
\end{lemma}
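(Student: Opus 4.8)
The plan is to evaluate the integral defining $\Mt_4^{\{2\}}$ directly, the crucial step being the determination of the $\GL_2(k_{\bF})$-conjugacy class of the reduction $[\kappa]$ at each point of $\tilde{G}^{\{2\}}$. Writing $\kappa = \big(\begin{smallmatrix} x_1 & x_2 \\ x_3 & x_4 \end{smallmatrix}\big)$ with $x_1,x_3,x_4 \in \vo^{\times}$ and $x_2 \in \vp$, the reduction $[\kappa]$ is lower triangular with nonzero diagonal and nonzero lower-left entry. If $x_1$ and $x_4$ are incongruent modulo $\vp$, then $[\kappa]$ has two distinct eigenvalues in $k_{\bF}$, hence is split regular semisimple and lies in the ``otherwise'' case of (\ref{Dep0CharT}), so $\chi_{\theta^{\flat}}([\kappa]) = 0$; if $x_1 \equiv x_4 \pmod{\vp}$, then, the lower-left entry being a unit, $[\kappa]$ is a nonzero scalar times a nontrivial unipotent, so $[\kappa] \sim \gp{Z}(k_{\bF})\gp{N}(k_{\bF}) - \gp{Z}(k_{\bF})$ and $\chi_{\theta^{\flat}}([\kappa]) = -1$.

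Consequently $\Mt_4^{\{2\}}$ equals $-(q-1)\zeta_{\bF}(1)^4$ times the integral of $\chi(x_1 x_4/(x_2 x_3))\,|x_1 x_2 x_3 x_4|^{-1/2}$ over $\{\, x_1,x_3,x_4 \in \vo^{\times},\ x_2 \in \vp,\ x_1 \equiv x_4 \pmod{\vp} \,\}$. Here the four variables decouple into the independent blocks $\{x_3\}$, $\{x_1,x_4\}$ (tied only through the congruence) and $\{x_2\}$, so the integral factorizes. The $x_3$-factor $\int_{\vo^{\times}}\chi(x_3)^{-1}\,\ud x_3$ forces $\fa(\chi)=0$ (otherwise $\Mt_4^{\{2\}}=0$); assuming $\chi$ unramified from now on, it equals $1-q^{-1}$, the $(x_1,x_4)$-factor (on which $\chi$ is trivial) is the volume $(1-q^{-1})q^{-1}$, and the $x_2$-factor is the geometric series $\int_{\vp}\chi(x_2)^{-1}|x_2|^{-1/2}\,\ud x_2 = \sum_{k\ge 1}\chi(\varpi)^{-k}q^{k/2}\cdot q^{-k}(1-q^{-1}) = (1-q^{-1})\,\chi^{-1}(\varpi)q^{-1/2}\,(1-\chi^{-1}(\varpi)q^{-1/2})^{-1}$. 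Multiplying the three factors by the prefactor and using $\zeta_{\bF}(1)=(1-q^{-1})^{-1}$ together with $q-1=q(1-q^{-1})$, all powers of $q$ and of $1-q^{-1}$ cancel, which yields the claimed value of $\Mt_4^{\{2\}}$.

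For $\Mt_4^{\{3\}}$ I would not redo the computation but appeal to a symmetry: $\chi_{\theta^{\flat}}$ is a class function on $\GL_2(k_{\bF})$ invariant under transposition (since $g^{T}\sim g$), while both $\chi(x_1 x_4/(x_2 x_3))$ and $|x_1 x_2 x_3 x_4|$ are unchanged under the substitution $x_2 \leftrightarrow x_3$; this substitution is measure-preserving and carries $\tilde{G}^{\{3\}}$ bijectively onto $\tilde{G}^{\{2\}}$, so $\Mt_4^{\{3\}} = \Mt_4^{\{2\}}$.

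The only step demanding care is the conjugacy-class bookkeeping of the first paragraph: one must check that reductions with coinciding diagonal entries are genuinely non-central — this is where the unit lower-left entry is used — so that the relevant character value is $-1$ rather than $q-1$, and that reductions with distinct diagonal entries are split rather than elliptic, so that the character value is $0$. The rest is a routine evaluation of local integrals together with the tally of the normalizing constants $\zeta_{\bF}(1)$ and $q-1$.
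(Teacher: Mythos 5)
Your argument is correct and follows essentially the same route as the paper: both rest on reading off $\chi_{\theta^{\flat}}$ via the character table \eqref{Dep0CharT} on the reduction mod $\vp$ (the paper packages this as the closed-form sum $\sum_{a_1,a_2,a_4\in k_{\bF}^\times}\overline{\chi_{\theta^{\flat}}}=-(q-1)^2$, while you record the pointwise values $-1$ on $x_1\equiv x_4$ and $0$ otherwise), combined with the vanishing of the remaining unramified integrals when $\fa(\chi)\geq1$. The only cosmetic difference is that the paper evaluates $\Mt_4^{\{3\}}$ directly and treats $\Mt_4^{\{2\}}$ as ``similar,'' whereas you evaluate $\Mt_4^{\{2\}}$ and get $\Mt_4^{\{3\}}$ by the transpose symmetry $x_2\leftrightarrow x_3$ together with $g^T\sim g$; both organizations lead to the same arithmetic.
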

\begin{proof}
	We only treat $\Mt_4^{\{3\}}$. By definition, we get
\begin{align*}
	\Mt_4^{\{3\}} &= (q-1) \zeta_{\bF}(1)^3  \int_{(\vo^{\times})^3} \overline{\chi_{\theta^{\flat}} \Big(\Big( \begin{matrix} x_1 & x_2 \\ 0 & x_4 \end{matrix} \Big)\Big)} \chi \Big( \frac{x_1 x_4}{x_2} \Big) \ud x_1 \ud x_2 \ud x_4 \cdot \int_{\vp} \chi^{-1}(x_3) |x_3|^{\frac{1}{2}} \ud^{\times}x_3 \\
	&= (q-1) \zeta_{\bF}(1)^3 \mathbbm{1}_{\fa(\chi)=0} \frac{\chi^{-1}(\varpi) q^{-\frac{1}{2}}}{1-\chi^{-1}(\varpi) q^{-\frac{1}{2}}} \cdot q^{-3} \sum_{a_1,a_2,a_4 \in k_{\bF}^{\times}} \overline{\chi_{\theta^{\flat}} \Big(\Big( \begin{matrix} a_1 & a_2 \\ 0 & a_4 \end{matrix} \Big)\Big)} \\
	&= (q-1) \zeta_{\bF}(1)^3 \mathbbm{1}_{\fa(\chi)=0} \frac{\chi^{-1}(\varpi) q^{-\frac{1}{2}}}{1-\chi^{-1}(\varpi) q^{-\frac{1}{2}}} \cdot q^{-3} (q-1)^2 (-1)
\end{align*}
	as desired.
\end{proof}

\begin{lemma}
	We have
	$$ \Mt_4^{\{2,3\}} = (q-1) \mathbbm{1}_{\fa(\chi)=0} \cdot \Big( \frac{\chi^{-1}(\varpi) q^{-\frac{1}{2}}}{1 - \chi^{-1}(\varpi) q^{-\frac{1}{2}}} \Big)^2. $$
\end{lemma}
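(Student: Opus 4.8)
The plan is to follow, in a simpler setting, the computation carried out above for $\Mt_4^{\{3\}}$. On the pre-image $\tilde{G}^{\{2,3\}}$ one has $x_1,x_4\in\vo^{\times}$ and $x_2,x_3\in\vp$, so reduction modulo $\vp$ sends $\big(\begin{smallmatrix} x_1 & x_2 \\ x_3 & x_4\end{smallmatrix}\big)$ to the diagonal matrix $\diag([x_1],[x_4])$. The first step is therefore to read off $\chi_{\theta^{\flat}}$ on diagonal matrices from \eqref{Dep0CharT}: when $x_1$ and $x_4$ reduce to the same class in $k_{\bF}^{\times}$, the element is central and $\chi_{\theta^{\flat}}=q-1$; otherwise it is a split regular semisimple element, hence neither central, nor conjugate to a non-central element of $\gp{Z}(k_{\bF})\gp{N}(k_{\bF})$ (those are not diagonalizable), nor conjugate into $k_{\bE}-k_{\bF}$ (those have irreducible characteristic polynomial over $k_{\bF}$), so it falls into the last case of \eqref{Dep0CharT} and $\chi_{\theta^{\flat}}=0$. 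Thus $\overline{\chi_{\theta^{\flat}}([\,\cdot\,])}=(q-1)\,\mathbbm{1}_{x_1\equiv x_4}$ throughout $\tilde{G}^{\{2,3\}}$, the congruence being modulo $\vp$.

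Next I would factor the four-fold integral defining $\Mt_4^{\{2,3\}}$ as a product of an integral over $(x_1,x_4)\in(\vo^{\times})^2$ carrying the constraint $x_1\equiv x_4$ and two independent integrals over $x_2\in\vp$ and $x_3\in\vp$; here $|x_1x_2x_3x_4|^{1/2}$ collapses to $|x_2x_3|^{1/2}$ since $|x_1|=|x_4|=1$, and $\chi(x_1x_4/x_2x_3)$ splits correspondingly. Each $\vp$-integral, after absorbing one factor $\zeta_{\vp}(1)$ to become $\int_{\vp}\chi^{-1}(x)|x|^{1/2}\,\ud^{\times}x$, is summed as a geometric series in $\varpi$ together with $\int_{\vo^{\times}}\chi^{-1}(v)\,\ud^{\times}v=\mathbbm{1}_{\fa(\chi)=0}$, yielding $\mathbbm{1}_{\fa(\chi)=0}\,\chi^{-1}(\varpi)q^{-1/2}\big(1-\chi^{-1}(\varpi)q^{-1/2}\big)^{-1}$, exactly as in the treatment of $\Mt_4^{\{3\}}$. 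This already produces the factor $\mathbbm{1}_{\fa(\chi)=0}$ and the squared bracket of the claimed identity, so it remains to evaluate the $(\vo^{\times})^2$-integral on the locus where $\chi$ is unramified.

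There $\chi(x_1x_4)=1$ and $\overline{\chi_{\theta^{\flat}}}$ is constant on residue classes, so the integral over $\{(x_1,x_4)\in(\vo^{\times})^2 : x_1\equiv x_4\}$, weighted by $\overline{\chi_{\theta^{\flat}}}=q-1$ on that locus, equals $q^{-2}\sum_{a\in k_{\bF}^{\times}}(q-1)=q^{-2}(q-1)^2$. Collecting the prefactor $(q-1)$ from the definition of $\Mt_4^{I}$, the two remaining copies of $\zeta_{\vp}(1)$, and $q^{-2}(q-1)^2$, one uses $\zeta_{\vp}(1)^2 q^{-2}(q-1)^2=1$ to arrive at $(q-1)\,\mathbbm{1}_{\fa(\chi)=0}\big(\chi^{-1}(\varpi)q^{-1/2}/(1-\chi^{-1}(\varpi)q^{-1/2})\big)^2$. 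No serious difficulty is expected; the only point requiring care is the conjugacy-class bookkeeping in the first step, namely recognising that split regular semisimple elements lie in the vanishing locus of the cuspidal character $\chi_{\theta^{\flat}}$, so that only the central contribution $[x_1]=[x_4]$ survives.
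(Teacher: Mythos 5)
Your proposal is correct and follows essentially the same route as the paper: factor the integral over $\tilde G^{\{2,3\}}$ into the two $\vp$-integrals (each giving the unramifiedness condition and the geometric-series factor $\chi^{-1}(\varpi)q^{-1/2}/(1-\chi^{-1}(\varpi)q^{-1/2})$) times the $(\vo^{\times})^2$-integral, which reduces via \eqref{Dep0CharT} to $q^{-2}\sum_{a_1,a_4\in k_{\bF}^{\times}}\overline{\chi_{\theta^{\flat}}(\mathrm{diag}(a_1,a_4))}=q^{-2}(q-1)^2$, and then cancel against $\zeta_{\vp}(1)^2$. The only difference is presentational: you spell out the conjugacy-class bookkeeping (vanishing of the cuspidal character on split regular semisimple elements, value $q-1$ on the centre) that the paper leaves as the brief remark that the inner sum equals $(q-1)^2$.
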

\begin{proof}
	By definition, we have
\begin{align*}
	\Mt_4^{\{2,3\}} &= (q-1) \zeta_{\bF}(1)^2  \int_{(\vo^{\times})^2} \overline{\chi_{\theta^{\flat}} \Big(\Big( \begin{matrix} x_1 & 0 \\  0& x_4 \end{matrix} \Big)\Big)} \chi \left( x_1 x_4 \right) \ud x_1 \ud x_4 \cdot \Big(\int_{\vp} \chi^{-1}(x_2) |x_2|^{\frac{1}{2}} \ud^{\times}x_2 \Big)^2\\
	&= (q-1) \zeta_{\bF}(1)^2 \id_{\fa(\chi)=0} \cdot \Big( \frac{\chi^{-1}(\varpi) q^{-\frac{1}{2}}}{1 - \chi^{-1}(\varpi) q^{-\frac{1}{2}}} \Big)^2 \cdot q^{-2} \sum_{a_1,a_4 \in k_{\bF}^{\times}} \overline{\chi_{\theta^{\flat}} \Big(\Big( \begin{matrix} a_1 & 0 \\ 0 & a_4 \end{matrix} \Big)\Big)}.
\end{align*}
The lemma follows by noting that the inner sum is equal to $(q-1)^2$.
\end{proof}

\begin{lemma}
	We have 
	$$ \Mt_4^{\{1,4\}} = - (q-1) \id_{\fa(\chi)=0} \cdot \Big( \frac{\chi(\varpi) q^{-\frac{1}{2}}}{1 - \chi(\varpi) q^{-\frac{1}{2}}} \Big)^2 \cdot \overline{\theta^{\flat}(\sqrt{\varepsilon})}. $$
\end{lemma}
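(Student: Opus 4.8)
The idea is to unfold $\Mt_4^{\{1,4\}}$ into a product of separate idele integrals once the value of the cuspidal character $\chi_{\theta^\flat}$ on anti-diagonal matrices is made explicit, and then to reduce everything to two elementary local integrals. Concretely, $\tilde{G}^{\{1,4\}}$ consists of $\left(\begin{smallmatrix} x_1 & x_2\\ x_3 & x_4\end{smallmatrix}\right)$ with $x_1,x_4\in\vp$ and $x_2,x_3\in\vo^\times$, so the reduction modulo $\vp$ of such a matrix is the anti-diagonal matrix $\left(\begin{smallmatrix} 0 & a_2\\ a_3 & 0\end{smallmatrix}\right)$ with $a_j:=x_j\bmod\vp\in k_{\bF}^\times$ --- a value independent of $x_1,x_4$. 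Since $x_2,x_3$ are units, $|x_1x_2x_3x_4|^{1/2}=|x_1|^{1/2}|x_4|^{1/2}$ and $\chi\bigl(x_1x_4/(x_2x_3)\bigr)=\chi(x_1)\chi(x_4)\chi^{-1}(x_2)\chi^{-1}(x_3)$, whence the integral factorizes as
\[
\Mt_4^{\{1,4\}}=(q-1)\,\zeta_{\bF}(1)^4\Bigl(\int_{\vp}\chi(x)|x|^{-1/2}\,\ud x\Bigr)^{2}\int_{(\vo^\times)^2}\overline{\chi_{\theta^\flat}\!\left(\begin{smallmatrix} 0 & a_2\\ a_3 & 0\end{smallmatrix}\right)}\,\chi^{-1}(x_2)\chi^{-1}(x_3)\,\ud x_2\,\ud x_3.
\]
The $\vp$-integral is computed by splitting $\vp\setminus\{0\}$ into the shells $\varpi^n\vo^\times$ ($n\ge 1$) and summing a geometric series; it equals $\mathbbm{1}_{\fa(\chi)=0}\,(1-q^{-1})\,\chi(\varpi)q^{-1/2}/(1-\chi(\varpi)q^{-1/2})$, and in particular forces $\fa(\chi)=0$.

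The heart of the proof is the evaluation of $\chi_{\theta^\flat}$ on $g=\left(\begin{smallmatrix} 0 & a_2\\ a_3 & 0\end{smallmatrix}\right)$ with $a_2,a_3\in k_{\bF}^\times$ via the character formula \eqref{Dep0CharT}. The characteristic polynomial of $g$ is $X^2-a_2a_3$, of discriminant $4a_2a_3\neq 0$ (here $\vp\nmid 2$), so $g$ is regular semisimple; hence it is never conjugate into $\gp{Z}(k_{\bF})$ or $\gp{Z}(k_{\bF})\gp{N}(k_{\bF})$, and $\chi_{\theta^\flat}(g)=0$ when $a_2a_3$ is a square (the split regular torus, i.e.\ the ``otherwise'' case). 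When $a_2a_3$ is a non-square, $g\sim k_{\bE}-k_{\bF}$ with eigenvalue $y\in k_{\bE}^\times$ satisfying $y^2=a_2a_3$ and Galois conjugate $y^\iota=-y$; writing $y=\beta\sqrt{\varepsilon}$ with $\beta\in k_{\bF}^\times$ and using that $\theta^\flat$ is trivial on the image of $k_{\bF}^\times$ (in particular on $\beta$ and on $-1$), one finds $\theta^\flat(y)=\theta^\flat(y^\iota)=\theta^\flat(\sqrt{\varepsilon})$, so that $\chi_{\theta^\flat}(g)=-2\,\theta^\flat(\sqrt{\varepsilon})$. Equivalently, with $\phi$ the quadratic residue character of $k_{\bF}^\times$,
\[
\overline{\chi_{\theta^\flat}\!\left(\begin{smallmatrix} 0 & a_2\\ a_3 & 0\end{smallmatrix}\right)}=-\,\overline{\theta^\flat(\sqrt{\varepsilon})}\,\bigl(1-\phi(a_2)\phi(a_3)\bigr).
\]

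Substituting this into the $(x_2,x_3)$-integral splits it into $\bigl(\int_{\vo^\times}\chi^{-1}\bigr)^2-\bigl(\int_{\vo^\times}\phi([x])\chi^{-1}(x)\,\ud x\bigr)^2$; since $\fa(\chi)=0$ the first factor is $(1-q^{-1})^2$ while the second vanishes because $\phi$ is a non-trivial character of $k_{\bF}^\times$. Assembling the three pieces and using $\zeta_{\bF}(1)=(1-q^{-1})^{-1}$ to cancel the accumulated powers of $1-q^{-1}$ yields the asserted formula. The only genuinely non-routine point is the conjugacy-class bookkeeping for $\chi_{\theta^\flat}$ on anti-diagonal matrices together with the collapse of the two eigenvalue contributions to the single value $\theta^\flat(\sqrt{\varepsilon})$; the remaining manipulations of measures and geometric series are mechanical and entirely parallel to the preceding lemmas on $\Mt_4^{\{2\}}$, $\Mt_4^{\{3\}}$ and $\Mt_4^{\{2,3\}}$.
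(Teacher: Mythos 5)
Your proposal is correct and follows essentially the same route as the paper: factor the integral into the two $\vp$-integrals over $x_1,x_4$ (which force $\fa(\chi)=0$) times the unit-square integral of $\overline{\chi_{\theta^{\flat}}}$ on trace-zero antidiagonal matrices, then evaluate that character via the conjugacy analysis ($g\sim\pm\beta\sqrt{\varepsilon}$ iff $a_2a_3=\beta^2\varepsilon$, zero in the split case) and the triviality of $\theta^{\flat}$ on $k_{\bF}^{\times}$. The only cosmetic difference is that you encode the class-counting pointwise as $-\overline{\theta^{\flat}(\sqrt{\varepsilon})}\bigl(1-\phi(a_2)\phi(a_3)\bigr)$ and use $\sum_a\phi(a)=0$, whereas the paper counts solutions of $a_2a_3=\beta^2\varepsilon$ directly; both give $-(q-1)^2\theta^{\flat}(\sqrt{\varepsilon})$ for the residue-field sum and the same final constant.
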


\begin{proof}
	By definition, we have
\begin{align*}
	\Mt_4^{\{1,4\}}&= (q-1) \zeta_{\bF}(1)^2 \int_{(\vo^{\times})^2} \overline{\chi_{\theta^{\flat}} \Big(\Big( \begin{matrix}  0&  x_2\\  x_3 &  0\end{matrix} \Big)\Big)}  \chi^{-1} \left( x_2 x_3 \right) \ud x_2 \ud x_3 \cdot \Big(\int_{\vp} \chi(x_1) |x_1|^{\frac{1}{2}} \ud^{\times}x_1\Big)^2\\
	&= (q-1) \zeta_{\bF}(1)^2 \id_{\fa(\chi)=0} \cdot \Big( \frac{\chi(\varpi) q^{-\frac{1}{2}}}{1 - \chi(\varpi) q^{-\frac{1}{2}}} \Big)^2 \cdot q^{-2} \sum_{a_2,a_3 \in k_{\bF}^{\times}} \overline{\chi_{\theta^{\flat}} \Big(\Big( \begin{matrix}  0& a_2 \\ a_3 &  0\end{matrix}\Big)\Big)}.
\end{align*}
	The matrix $(\begin{smallmatrix}  0& a_2 \\ a_3 &  0\end{smallmatrix})$ being of trace $0$, can not be conjugate to an element in $\gp{Z}(k_{\bF}) \gp{N}(k_{\bF})$. It is conjugate to $\beta \sqrt{\varepsilon}$ for some $\beta \in k_{\bF}^{\times}$ if and only if $a_2a_3 = \beta^2 \varepsilon$. Thus
	$$ \sum_{a_2,a_3 \in k_{\bF}^{\times}} \chi_{\theta^{\flat}} \Big(\Big( \begin{matrix}  0& a_2 \\ a_3 &  0\end{matrix}\Big)\Big)= -(q-1) \sum_{\beta \in k_{\bF}^{\times}/\{ \pm 1 \}} (\theta^{\flat}(\beta \sqrt{\varepsilon})+\theta^{\flat}(-\beta \sqrt{\varepsilon}) ) = - (q-1)^2\theta^{\flat}(\sqrt{\varepsilon}).
	$$
	The lemma follows readily.
\end{proof}

	By definition, we have
	$$ \Mt_4^{\emptyset} = (q-1) \zeta_{\bF}(1)^4 \int_{(\vo^{\times})^4}\overline{\chi_{\theta^{\flat}} \Big(\Big( \begin{matrix} x_1 & x_2 \\ x_3 & x_4 \end{matrix} \Big)\Big)} \chi \Big( \frac{x_1 x_4}{x_2 x_3} \Big)\id_{x_1x_4-x_2x_3 \in\vo^{\times}}\ud x_1\ud x_2\ud x_3\ud x_4.$$
	It is easy to see that $\Mt_4^{\emptyset} $ vanishes unless $\fa(\chi) \leq 1$, since, for example, $\overline{\chi_{\theta^{\flat}}(\cdot)}$ is invariant under the change of variable $x_1 \mapsto x_1(1+u)$ for any $u \in \vp$. In this case, we may regard $\chi$ as a character of $k_{\bF}^{\times}$, and rewrite the above expression as
	$$ \Mt_4^{\emptyset} = (q-1)^{-3} \sum_{\substack{a_1,a_2,a_3,a_4\in k_{\bF}^{\times} \\ a_1a_4 - a_2a_3 \neq 0}}\overline{\chi_{\theta^{\flat}} \Big(\Big( \begin{matrix} a_1 & a_2 \\ a_3 & a_4 \end{matrix} \Big)\Big)} \chi \Big( \frac{a_1 a_4}{a_2 a_3} \Big). $$
	The conjugacy class of the matrix $y :=(\begin{smallmatrix} a_1 & a_2 \\ a_3 & a_4 \end{smallmatrix})$ is intimately related to $\Delta := (a_1-a_4)^2+4a_2a_3$. Precisely, we have
	\begin{align*}
	y \sim \gp{Z}(k_{\bF}) \gp{N}(k_{\bF}) - \gp{Z}(k_{\bF}) & \Longleftrightarrow  \Delta = 0
	\end{align*}
	and
	\begin{align*}
	y \sim k_{\bE}-k_{\bF} & \Longleftrightarrow  \Delta \in (k_{\bF}^{\times})^2 \varepsilon.
	\end{align*}

	Moreover, $\chi_{\theta^{\flat}}(y)$ vanishes if $\Delta \in (k_{\bF}^{\times})^2$. We can thus decompose
	$$ \Mt_4^{\emptyset} = \Mt_4^{\emptyset,0} + \Mt_4^{\emptyset,1} $$
	with
	\begin{align*}
	\Mt_4^{\emptyset,0}
	&= -(q-1)^{-3} \sum_{\substack{a_1,a_2,a_3,a_4\in k_{\bF}^{\times} \\ a_1a_4 - a_2a_3 \neq 0 \\ \Delta = 0}} \chi \Big( \frac{a_1a_4}{a_2a_3} \Big), \\
	\Mt_4^{\emptyset,1}
	&= -(q-1)^{-3} \sum_{\beta \in k_{\bF}^{\times}} \sum_{\substack{a_1,a_2,a_3,a_4\in k_{\bF}^{\times} \\ a_1a_4 - a_2a_3 \neq 0 \\ \Delta = 4\beta^2 \varepsilon}} \overline{\theta^{\flat} \Big(\frac{a_1+a_4}{2}+\beta \sqrt{\varepsilon} \Big)} \chi \Big( \frac{a_1a_4}{a_2a_3} \Big). 
	\end{align*}
	
Trivially we have $\Mt_4^{\emptyset,0}\ll1.$ To analyze $\Mt_4^{\emptyset,1}$, we re-parametrize the sum with $\alpha \in k_{\bF}, \beta \in k_{\bF}^{\times}, t \in k_{\bF}-\{ 0,1 \}$ such that
$$ a_1+a_4 = 2\alpha, \quad \Delta = (a_1-a_4)^2+4a_2a_3 = 4\beta^2 \varepsilon, \quad a_1a_4 = t a_2a_3. $$
Equivalently, the change of variables is
$$ a_1+a_4 = 2\alpha, \quad a_1a_4 = (1-t^{-1})^{-1} (\alpha^2 - \beta^2 \varepsilon), \quad a_1a_4 = t a_2a_3. $$
We can rewrite
\begin{align*} 
\Mt_4^{\emptyset,1}
&= -(q-1)^{-2} \sum_{\substack{\alpha,\beta,t,a_1,a_4\in k_{\bF}\\ \beta\neq0,~t\neq 0,1\\a_1+a_4 =2\alpha,~a_1a_4 (1-t^{-1})=(\alpha^2 - \beta^2 \varepsilon)}}\overline{\theta^{\flat}(\alpha+\beta \sqrt{\varepsilon})} \chi(t)\\
&= -(q-1)^{-2} \sum_{\substack{\alpha,\beta,t,a_1,a_4\in k_{\bF}\\ \beta\neq0,~t\neq 0,1\\a_1+a_4 =2\alpha,~a_1a_4 (1-t^{-1})=(\alpha^2 - \beta^2 \varepsilon)}}\overline{\theta^{\flat}(\alpha/\beta+ \sqrt{\varepsilon})} \chi(t).
\end{align*}
Making the change of variable $\alpha\rightarrow \alpha\beta$ for given $\beta\in k_\bF^\times,$ we further write
\begin{align*} 
\Mt_4^{\emptyset,1}
&= -(q-1)^{-2} \sum_{\substack{\alpha,\beta,t,a_1,a_4\in k_{\bF}\\ \beta\neq0,~t\neq 0,1\\a_1+a_4 =2\alpha\beta,~a_1a_4 (1-t)= \beta^2(\alpha^2 -\varepsilon)}}\overline{\theta^{\flat}(\alpha+\sqrt{\varepsilon})} \overline{\chi}(t)\\
&= -(q-1)^{-2} \sum_{\substack{\alpha,\beta,t,a_1,a_4\in k_{\bF}\\a_1+a_4 =2\alpha\beta,~a_1a_4 (1-t)= \beta^2(\alpha^2 -\varepsilon)}}\overline{\theta^{\flat}(\alpha+\sqrt{\varepsilon})} \overline{\chi}(t)+O(1)\\
&= -(q-1)^{-2} \sum_{\substack{\alpha,t,a_1,a_4\in k_{\bF}\\ 4\alpha^2 a_1a_4 (1-t)=(a_1+a_4)^2(\alpha^2-\varepsilon)}}\overline{\theta^{\flat}(\alpha+\sqrt{\varepsilon})} \overline{\chi}(t)+O(1).
\end{align*}
For given $\alpha,t\in k_{\bF}$, the number of tuples $(a_1,a_4)\in k_\bF^2$ satisfying $4\alpha^2 a_1a_4 (1-t)=(a_1+a_4)^2(\alpha^2-\varepsilon)$ is exactly $q(1+\phi(\alpha^2t-\varepsilon)\phi(t-1)),$
where $\phi$ is the unique non-trivial quadratic character of $k_{\bF}^{\times}$. It then follows that
\begin{align*} 
\Mt_4^{\emptyset,1}
&= -q(q-1)^{-2} \sum_{\alpha,t\in k_{\bF}}\overline{\theta^{\flat}(\alpha+\sqrt{\varepsilon})} \overline{\chi}(t)(1+\phi(\alpha^2t-\varepsilon)\phi(t-1))+O(1)\\
&= -q(q-1)^{-2} \sum_{\alpha,t\in k_{\bF}}\overline{\theta^{\flat}(\alpha+\sqrt{\varepsilon})}\chi(t)(1+\phi(\alpha^2-\varepsilon t)\phi(1-t))+O(1).
\end{align*}
If $\fa(\chi)=0$, then $\chi(t)=1$ for all $t\in k_\bF^\times,$ so that
\begin{align*} 
\Mt_4^{\emptyset,1}
&= -q(q-1)^{-1} \sum_{\alpha\in k_{\bF}}\overline{\theta^{\flat}(\alpha+\sqrt{\varepsilon})}-q(q-1)^{-2} \sum_{\alpha\in k_{\bF}}\overline{\theta^{\flat}(\alpha+\sqrt{\varepsilon})}\sum_{t\in k_{\bF}^\times}\phi(\alpha^2-\varepsilon t)\phi(1-t)+O(1)\\
&=-q(q-1)^{-2} \sum_{\alpha\in k_{\bF}}\overline{\theta^{\flat}(\alpha+\sqrt{\varepsilon})}\sum_{t\in k_{\bF}}\phi(\alpha^2-\varepsilon t)\phi(1-t)+O(1)\\
&\ll 1.
\end{align*}
If $\fa(\chi)=1$, we have
\begin{align*} 
\Mt_4^{\emptyset,1}
&= -q(q-1)^{-2} S(\theta^{\flat},\chi)+O(1),
\end{align*}
where
\begin{align}\label{eq:charactersum-1st}
S(\theta^{\flat},\chi)&=\sum_{\alpha,t\in k_{\bF}}\overline{\theta^{\flat}(\alpha+\sqrt{\varepsilon})}\chi(t)\phi(\alpha^2-\varepsilon t)\phi(1-t).
\end{align}

\begin{proposition}\label{prop:charactersum-1st}
For each non-trivial character $\chi$ of $k_{\bF}^{\times},$ we have
$$ S(\theta^{\flat}, \chi) \ll q. $$
\end{proposition}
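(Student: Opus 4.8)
The plan is to recognise $S(\theta^{\flat},\chi)$, up to an error of size $O(q^{1/2})$, as the Frobenius trace on the middle $\ell$-adic cohomology of a sheaf of bounded rank on an open curve over $k_{\bF}$, and then to conclude by Deligne's weight bound together with Katz's structure theory of hypergeometric sheaves \cite{Ka90}. For $\alpha\neq 0$, substituting $t\mapsto(\alpha^{2}/\varepsilon)t$ in the inner sum over $t$ and using $\phi(\alpha^{2})=1$ gives
\[
\sum_{t\in k_{\bF}}\chi(t)\phi(\alpha^{2}-\varepsilon t)\phi(1-t)=\chi(\alpha)^{2}\chi(\varepsilon)^{-1}H(\alpha^{2}/\varepsilon),\qquad H(x):=\sum_{t\in k_{\bF}}\chi(t)\phi(1-t)\phi(1-xt).
\]
By the Grothendieck--Lefschetz formula $H(x)=-\tr(\Frob_{x}\mid\cH_{x})$, where $\cH$ is the sheaf on the $x$-line whose stalk at $x$ is $H^{1}_{c}$ of the $t$-line with coefficients in $\mathcal{L}_{\chi}(t)\otimes\mathcal{L}_{\phi}(1-t)\otimes\mathcal{L}_{\phi}(1-xt)$; a Grothendieck--Ogg--Shafarevich count (each $t$-fibre has the four distinct, tamely ramified punctures $0,1,1/x,\infty$, all with nontrivial local monodromy) shows $\cH$ is lisse of rank $2$ on $\mathbb{G}_{m}\setminus\{1\}$, everywhere tame, and pure of weight $1$. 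For $\chi\neq\phi$ this is a non-degenerate hypergeometric sheaf of type $(2,2)$ à la Katz, with parameters among the characters $\{\chi,\phi,\mathbbm{1}\}$ of $k_{\bF}^{\times}$ (the analogue of the identification of the Conrey--Iwaniec sum carried out in \cite{Xi23}); for $\chi=\phi$ it is $H^{1}$ of the Legendre family $y^{2}=t(1-t)(1-xt)$.

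Next one assembles the sum over $\alpha$. Put $\cG:=\mathcal{L}_{\chi^{2}}\otimes[\alpha\mapsto\alpha^{2}/\varepsilon]^{*}\cH$, a tame sheaf of rank $2$, pure of weight $1$, lisse on $U:=\mathbb{G}_{m}\setminus\{\sqrt{\varepsilon},-\sqrt{\varepsilon}\}$; since $\pm\sqrt{\varepsilon}\notin k_{\bF}$ we have $U(k_{\bF})=k_{\bF}^{\times}$. Let $\cM$ be the rank-one tame sheaf on $\mathbb{A}^{1}/k_{\bF}$ with trace function $\alpha\mapsto\overline{\theta^{\flat}(\alpha+\sqrt{\varepsilon})}$, obtained by restricting the Kummer sheaf $\mathcal{L}_{\overline{\theta^{\flat}}}$ on $\mathrm{Res}_{k_{\bE}/k_{\bF}}\mathbb{G}_{m}$ along the line $\alpha\mapsto\alpha+\sqrt{\varepsilon}$; it is pure of weight $0$ and lisse away from $\alpha=-\sqrt{\varepsilon}$. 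Discarding the term $\alpha=0$ (a Jacobi-type sum, $O(q^{1/2})$), the Grothendieck--Lefschetz formula gives
\[
S(\theta^{\flat},\chi)=\chi(\varepsilon)^{-1}\tr\!\left(\Frob\mid H^{1}_{c}(U_{\overline{k_{\bF}}},\cM\otimes\cG)\right)+O(q^{1/2}),
\]
provided $H^{0}_{c}=H^{2}_{c}=0$. Now $H^{0}_{c}=0$ since $U$ is affine, $\dim H^{1}_{c}$ is bounded by an absolute constant by Grothendieck--Ogg--Shafarevich (rank $2$, tame, and $U$ is $\mathbb{P}^{1}$ minus four points), and $H^{1}_{c}$ is mixed of weight $\leq 2$ by Deligne; hence the displayed trace is $\ll q$, which is the claim.

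The heart of the matter --- and the step I expect to be the main obstacle --- is the vanishing $H^{2}_{c}(U_{\overline{k_{\bF}}},\cM\otimes\cG)=0$, equivalently the absence of a geometrically trivial quotient of $\cM\otimes\cG$; a non-zero $H^{2}_{c}$ would be pure of weight $3$ and contribute a term of size $q^{3/2}$, ruining the bound. I would argue via the local monodromy at the boundary point $\alpha=-\sqrt{\varepsilon}$. The map $\alpha\mapsto\alpha^{2}/\varepsilon$ is étale at $-\sqrt{\varepsilon}$ and sends it to $x=1$, so $\cG$ carries there the local monodromy of $\cH$ at $x=1$: by Katz (resp.\ by the nodal reduction of the Legendre curve when $\chi=\phi$) this is a tame pseudoreflection with eigenvalues $1$ and $\mu$, where $\mu$ is a character of order dividing $q-1$ since it is built from the parameters of $\cH$. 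On the other hand $\cM$ contributes at $-\sqrt{\varepsilon}$ a nontrivial tame character $\lambda=(\theta^{\flat})^{\pm1}$, whose order divides $q+1$ and is $>2$ because $\theta^{\flat}$ is \emph{regular}. Thus the local monodromy of $\cM\otimes\cG$ at $-\sqrt{\varepsilon}$ has eigenvalues $\lambda$ and $\lambda\mu$; as $\lambda\neq1$, and as $\lambda\mu=1$ would force $\ord(\lambda)\mid\gcd(q-1,q+1)=2$, neither eigenvalue is trivial. Hence $\cM\otimes\cG$ has no quotient unramified at $-\sqrt{\varepsilon}$, a fortiori no geometrically trivial quotient, so $H^{2}_{c}=0$. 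Besides this coprimality trick, the genuinely technical input is the precise placement of $\cH$ within Katz's hypergeometric formalism --- its rank, the tameness and determination of all its local monodromies, and the non-degeneracy (geometric irreducibility) for $\chi\neq\phi$ --- which is exactly what Section~\ref{sec:charactersums} must supply.
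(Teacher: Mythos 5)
Your proposal is correct in substance, but it reaches the bound by a genuinely different route than the paper. The paper first converts the inner $t$-sum, via Mellin inversion and the Jacobi--Gauss relation, into Katz's hypergeometric sum $H(y,q;(\chi_0,\chi_0),(\chi,\phi))$ (Lemma 5.8), then determines the \emph{global} geometric monodromy of the hypergeometric sheaf: by Katz's Theorem 8.11.2 the identity component is trivial or $\SL_2$, finiteness is excluded by (8.17.3) because the two upstairs characters coincide, and since $\SL_2$ has no finite-index algebraic subgroup the pullback under $\alpha\mapsto 1-\alpha^2\omega^{-2}$ stays geometrically irreducible of rank $2$; the bound then follows from the quasi-orthogonality of two non-isomorphic irreducible trace functions (rank $2$ versus the rank-one Lang-torsor sheaf), with the constant $1000$ coming from the conductors. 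You instead apply Grothendieck--Lefschetz and Deligne directly to $\cM\otimes\cG$ on $\mathbb{P}^1\setminus\{0,\pm\sqrt{\varepsilon},\infty\}$ and prove the decisive vanishing $H^2_c=0$ \emph{locally} at $\alpha=-\sqrt{\varepsilon}$: the inertia eigenvalues there are $\lambda$ and $\lambda\mu$ with $\ord(\lambda)=\ord(\theta^{\flat})$ dividing $q+1$ and exceeding $2$ (regularity, using that $\theta^{\flat}$ is trivial on $k_{\bF}^{\times}$ so $(\theta^{\flat})^{\iota}=\overline{\theta^{\flat}}$), while $\mu$ has order dividing $q-1$, and $\gcd(q-1,q+1)=2$ rules out a trivial eigenvalue. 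This trade-off is real: your argument dispenses entirely with the monodromy-group computation and even with geometric irreducibility of the pulled-back hypergeometric, but it uses regularity of $\theta^{\flat}$ in an essential way (the paper's Proposition 5.2 holds for \emph{any} nontrivial $\rho$, including order-two ones, where your coprimality trick would leave a possibly invariant line and you would need a global input), and it leans on two local-monodromy facts you leave to Katz: the tame pseudoreflection at $x=1$ with determinant $\overline{\chi}\phi$ (a transvection when $\chi=\phi$, which your Legendre-family remark covers), and the fact that the Lang-torsor sheaf $\cM$ has tame local monodromy at $-\sqrt{\varepsilon}$ of order exactly $\ord(\theta^{\flat})$. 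Both facts are correct: the first is Katz [Ka90, 8.4.2] once your fibrewise $H^1_c$ sheaf is matched with the hypergeometric (which is exactly what the paper's Lemma 5.8 supplies on the level of trace functions), and the second follows from the Lang isogeny of $\mathrm{Res}_{k_{\bE}/k_{\bF}}\mathbf{G}_m$, whose map on cocharacters sends the loops about the two boundary divisors to a generator $g$ of $k_{\bE}^{\times}$ and to $g^{q}$, so the two ramifications cancel at $\infty$ -- consistent with the paper's conductor count $\fc(\cF)=3$. Minor bookkeeping (the sign in Lefschetz, the $\alpha=0$ term, $\dim H^1_c=4$ by Euler--Poincar\'e) is fine and does not affect the bound $\ll q$.
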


It is trivial that $|S(\theta^{\flat}, \chi)|\leqslant q^2$, thus Proposition \ref{prop:charactersum-1st} captures double squareroot cancellations. The proof of Proposition \ref{prop:charactersum-1st} will be given in Section \ref{sec:charactersums}
(re-stated in Proposition \ref{prop:charactersum}). More precisely, we will associate $ S(\theta^{\flat}, \chi) $ with hypergeometric sums of Katz, so that $\ell$-adic cohomology enters in the picture.

Admitting Proposition \ref{prop:charactersum-1st}, we easily deduce the following result by summarizing the results obtained in previous paragraphs.

\begin{lemma}\label{LocMainBd2-2}
The dual weight $\Mt_4(\overline{C_{\theta}} \mid \chi, \frac{1}{2})$ vanishes unless $\fa(\chi) \leq 1,$ in which case we have 
$$\Mt_4(\overline{C_{\theta}} \mid \chi, \tfrac{1}{2})\ll 1. $$
\end{lemma}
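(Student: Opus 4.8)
The plan is to assemble the evaluations of the seven pieces $\Mt_4^I$ obtained in the preceding lemmas and to bound each of them. Starting from the decomposition
$$ \Mt_4(\overline{C_{\theta}} \mid \chi) = \Mt_4^{\emptyset} + \Mt_4^{\{1\}} + \Mt_4^{\{2\}} + \Mt_4^{\{3\}} + \Mt_4^{\{4\}} + \Mt_4^{\{1,4\}} + \Mt_4^{\{2,3\}}, $$
I would first dispose of the six terms with $I \neq \emptyset$. We already know $\Mt_4^{\{1\}} = \Mt_4^{\{4\}} = 0$; each of $\Mt_4^{\{2\}}, \Mt_4^{\{3\}}, \Mt_4^{\{1,4\}}, \Mt_4^{\{2,3\}}$ carries the indicator $\mathbbm{1}_{\fa(\chi)=0}$, and using $\bigl|\chi^{\pm1}(\varpi)q^{-1/2}/(1-\chi^{\pm1}(\varpi)q^{-1/2})\bigr| \leq q^{-1/2}/(1-q^{-1/2}) \asymp q^{-1/2}$ together with the factor $q-1$ present in the last two, each of these is $O(1)$ uniformly in $q$ and is supported on $\fa(\chi)=0$. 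This reduces the statement to the term $\Mt_4^{\emptyset}$.

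Next I would handle $\Mt_4^{\emptyset}$. The invariance of $\overline{\chi_{\theta^{\flat}}}$ under $x_i \mapsto x_i(1+u)$, $u \in \vp$, already forces $\Mt_4^{\emptyset}=0$ unless $\fa(\chi)\leq 1$; in that range $\chi$ is regarded as a character of $k_{\bF}^{\times}$ and $\Mt_4^{\emptyset}=\Mt_4^{\emptyset,0}+\Mt_4^{\emptyset,1}$. The term $\Mt_4^{\emptyset,0}$ is $(q-1)^{-3}$ times a sum over $\{\Delta=0\}\cap(k_{\bF}^{\times})^4$, a set of $\asymp q^3$ points, so the trivial bound gives $\Mt_4^{\emptyset,0}\ll 1$ at once. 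For $\Mt_4^{\emptyset,1}$ I would use the form
$$ \Mt_4^{\emptyset,1} = -q(q-1)^{-2}\sum_{\alpha,t\in k_{\bF}} \overline{\theta^{\flat}(\alpha+\sqrt{\varepsilon})}\,\chi(t)\bigl(1+\phi(\alpha^2-\varepsilon t)\phi(1-t)\bigr) + O(1) $$
obtained above, and split according to $\fa(\chi)$.

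When $\fa(\chi)=1$, the partial sum over $t$ of the $1$-summand vanishes since $\chi$ is non-trivial, leaving $\Mt_4^{\emptyset,1}=-q(q-1)^{-2}S(\theta^{\flat},\chi)+O(1)$; Proposition \ref{prop:charactersum-1st} then gives $S(\theta^{\flat},\chi)\ll q$, whence $\Mt_4^{\emptyset,1}\ll 1$. When $\fa(\chi)=0$, one has $\chi\equiv 1$ and the relevant double sum is $\sum_{\alpha\in k_{\bF}}\overline{\theta^{\flat}(\alpha+\sqrt{\varepsilon})}\sum_{t\in k_{\bF}}\phi(\alpha^2-\varepsilon t)\phi(1-t)$; here, for each fixed $\alpha$, the inner sum is $\sum_{t}\phi\bigl(\varepsilon t^2-(\alpha^2+\varepsilon)t+\alpha^2\bigr)$, a one-variable quadratic character sum whose discriminant equals $(\alpha^2-\varepsilon)^2$. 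Since $\varepsilon$ is a non-square this discriminant is nonzero for every $\alpha$, so the standard evaluation of such sums assigns the inner sum the constant value $-\phi(\varepsilon)$; the trivial bound on the remaining sum of length $q$ over $\alpha$ then yields $\Mt_4^{\emptyset,1}\ll q\cdot q^{-2}\cdot q\ll 1$, the square-root cancellation of this Gauss-type $\alpha$-sum not even being needed. Collecting the bounds for $\Mt_4^{\emptyset,0}$, $\Mt_4^{\emptyset,1}$ and the $I\neq\emptyset$ terms gives both the vanishing outside $\fa(\chi)\leq 1$ and the bound $\Mt_4(\overline{C_{\theta}}\mid\chi,\tfrac{1}{2})\ll 1$.

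The only genuinely non-trivial input here is Proposition \ref{prop:charactersum-1st}, and that is where the real obstacle lies: its proof, carried out in Section \ref{sec:charactersums} via Katz's theory of hypergeometric sums (hence Deligne's work on the Weil conjectures), is far deeper than anything else in this lemma, which amounts to bookkeeping together with the elementary evaluation of a quadratic character sum in one variable.
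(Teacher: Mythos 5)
Your proposal is correct and follows essentially the same route as the paper, whose proof of this lemma is just the summary of the preceding evaluations of the $\Mt_4^I$ together with Proposition \ref{prop:charactersum-1st} for the case $\fa(\chi)=1$. The only addition is your explicit evaluation of the inner quadratic character sum (constant value $-\phi(\varepsilon)$, since the discriminant $(\alpha^2-\varepsilon)^2$ never vanishes) in the case $\fa(\chi)=0$, a detail the paper leaves implicit in its final bound $\Mt_4^{\emptyset,1}\ll 1$.
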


\begin{lemma}\label{LocAuxBd2-2}
We have
$$ \Mt_4 ( \overline{C_{\theta}} \mid \id, \tfrac{1}{2} + s)
= -\frac{2q^{s-\frac{1}{2}}}{1-q^{s-\frac{1}{2}}} + (q-1) \Big( \frac{q^{s-\frac{1}{2}}}{1-q^{s-\frac{1}{2}}} \Big)^2 - \overline{\theta^{\flat}(\sqrt{\varepsilon})} (q-1) \Big( \frac{q^{-s-\frac{1}{2}}}{1-q^{-s-\frac{1}{2}}} \Big)^2 +\frac{2}{q-1}. $$
\end{lemma}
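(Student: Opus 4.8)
The plan is to re-run the seven-fold decomposition
$\Mt_4(\overline{C_{\theta}}\mid\id,\tfrac12+s)=\Mt_4^{\emptyset}+\Mt_4^{\{1\}}+\Mt_4^{\{2\}}+\Mt_4^{\{3\}}+\Mt_4^{\{4\}}+\Mt_4^{\{1,4\}}+\Mt_4^{\{2,3\}}$
already set up for the $\tfrac12$-line, now keeping the spectral parameter $s$ alive and specialising the character to $\chi=\id$. The key observation is that passing from ``$\chi$ arbitrary, exponent $\tfrac12$'' to ``$\chi=\id$, exponent $\tfrac12+s$'' changes almost nothing: every indicator $\mathbbm 1_{\fa(\chi)=0}$ and every factor $\chi(\varpi)^{\pm1}$ becomes $1$, the finite-field (``angular'') sums over strata of $\GL_2(k_{\bF})$ are untouched, and the only $s$-dependence enters through the rank-one $\vp$-adic integrals attached to the coordinates that are forced into $\vp$. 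Since the weight is $\norm[x_1x_4]_{\vp}^{\frac12+s}\norm[x_2x_3]_{\vp}^{\frac12-s}$, a $\vp$-slot in $\{1,4\}$ contributes $\int_{\vp}\norm[x]_{\vp}^{\frac12+s}\ud^{\times}x=q^{-\frac12-s}/(1-q^{-\frac12-s})$, while a $\vp$-slot in $\{2,3\}$ contributes $\int_{\vp}\norm[x]_{\vp}^{\frac12-s}\ud^{\times}x=q^{s-\frac12}/(1-q^{s-\frac12})$.

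First I would record that the angular sums over $G^{\{1\}}$ and $G^{\{4\}}$ vanish (this is exactly the content of the lemma asserting $\Mt_4^{\{1\}}=\Mt_4^{\{4\}}=0$, and it is manifestly independent of $s$), so these two terms drop out entirely. Then I would take the four lemmas computing $\Mt_4^{\{2\}},\Mt_4^{\{3\}},\Mt_4^{\{2,3\}},\Mt_4^{\{1,4\}}$ on the $\tfrac12$-line, set $\chi=\id$, and in each one replace the factor $q^{-1/2}/(1-q^{-1/2})$ coming from $\int_{\vp}\norm[x]^{1/2}\ud^{\times}x$ by the appropriate $s$-twisted integral above. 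In $\Mt_4^{\{2\}}$ and $\Mt_4^{\{3\}}$ the single $\vp$-slot lies in $\{2,3\}$, producing $-q^{s-1/2}/(1-q^{s-1/2})$ each; in $\Mt_4^{\{2,3\}}$ two $\vp$-slots in $\{2,3\}$ produce $(q-1)\bigl(q^{s-1/2}/(1-q^{s-1/2})\bigr)^2$; in $\Mt_4^{\{1,4\}}$ two $\vp$-slots in $\{1,4\}$ produce $-(q-1)\overline{\theta^{\flat}(\sqrt{\varepsilon})}\bigl(q^{-s-1/2}/(1-q^{-s-1/2})\bigr)^2$. Summed, these give precisely the first three displayed terms.

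The remaining term $\Mt_4^{\emptyset}$, in which all four variables are units, has $\norm[x_1x_4]^{\frac12+s}\norm[x_2x_3]^{\frac12-s}=1$; hence it is independent of $s$ and equals $(q-1)^{-3}\sum_{g\in G^{\emptyset}}\overline{\chi_{\theta^{\flat}}(g)}$ once $\chi=\id$. This is the only genuinely new computation, and it will be the main obstacle: in the general-$\chi$ analysis this term was only bounded by $O(1)$, the bound being the content of Proposition \ref{prop:charactersum-1st} on the associated hypergeometric sum, so to extract the exact constant I cannot simply specialise the earlier estimate nor do I want to evaluate the trivial-character specialisation of the Katz sum head-on. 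Instead I would use character orthogonality: $\pi(\theta^{\flat})$ is a non-trivial irreducible representation of $\GL_2(k_{\bF})$, so $\sum_{g\in\GL_2(k_{\bF})}\overline{\chi_{\theta^{\flat}}(g)}=0$; combined with the partition $\GL_2(k_{\bF})=\bigsqcup_I G^{I}$ this yields $\sum_{g\in G^{\emptyset}}\overline{\chi_{\theta^{\flat}}(g)}=-\sum_{I\neq\emptyset}\sum_{g\in G^{I}}\overline{\chi_{\theta^{\flat}}(g)}$, and every sum on the right-hand side was already evaluated while proving the $\Mt_4^{I}$-lemmas (namely $0$ for $I\in\{\{1\},\{4\}\}$, $-(q-1)^2$ for $I\in\{\{2\},\{3\}\}$, $(q-1)^2$ for $I=\{2,3\}$, and $-(q-1)^2\overline{\theta^{\flat}(\sqrt{\varepsilon})}$ for $I=\{1,4\}$). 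Feeding these in evaluates $\Mt_4^{\emptyset}$ to the constant $\tfrac{2}{q-1}$, and adding the four contributions gives the asserted identity. Finally I would note that meromorphic continuation in $s$ is automatic, every summand being an explicit rational function of $q^{\pm s}$.
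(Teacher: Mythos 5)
Your overall strategy is sound and is exactly the route the paper implicitly takes: since the local distribution depends only on $\chi\norm^s$, the value $\Mt_4(\overline{C_{\theta}}\mid\id,\tfrac{1}{2}+s)$ is the specialization of the general-$\chi$ computation at the central point to the unramified character with $\chi(\varpi)=q^{-s}$, and the first three displayed terms then drop out of the lemmas for $\Mt_4^{\{2\}},\Mt_4^{\{3\}},\Mt_4^{\{2,3\}},\Mt_4^{\{1,4\}}$ with the $\vp$-slot integrals you wrote. The gap is in your one genuinely new step, the evaluation of $\Mt_4^{\emptyset}$ at the trivial character. Feeding the angular sums you quote ($0$ for $G^{\{1\}},G^{\{4\}}$, $-(q-1)^2$ for $G^{\{2\}},G^{\{3\}}$, $(q-1)^2$ for $G^{\{2,3\}}$, $-(q-1)^2\overline{\theta^{\flat}(\sqrt{\varepsilon})}$ for $G^{\{1,4\}}$) into orthogonality gives $\sum_{g\in G^{\emptyset}}\overline{\chi_{\theta^{\flat}}(g)}=(q-1)^2\bigl(1+\overline{\theta^{\flat}(\sqrt{\varepsilon})}\bigr)$, i.e. $\Mt_4^{\emptyset}=\bigl(1+\overline{\theta^{\flat}(\sqrt{\varepsilon})}\bigr)/(q-1)$, and not $\tfrac{2}{q-1}$. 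These agree only if $\theta^{\flat}(\sqrt{\varepsilon})=1$, which you have not justified and which is false in general: $\sqrt{\varepsilon}$ represents the unique element of order $2$ in the cyclic group $k_{\bE}^{\times}/k_{\bF}^{\times}$ of order $q+1$, so $\theta^{\flat}(\sqrt{\varepsilon})=\pm1$, and it equals $-1$ whenever the order of $\theta^{\flat}$ does not divide $(q+1)/2$ (for $q=3$ every regular $\theta^{\flat}$ trivial on $k_{\bF}^{\times}$ has order $4$, so $\theta^{\flat}(\sqrt{\varepsilon})=-1$). So the last sentence of your argument does not prove the displayed identity.

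Pushed a bit further, your orthogonality bookkeeping exposes a genuine tension rather than resolving it, so you cannot repair the step merely by citing the earlier lemmas. A direct count (check $q=3$ by hand) gives $\sum_{g\in G^{\{1\}}}\overline{\chi_{\theta^{\flat}}(g)}=(q-1)^2\theta^{\flat}(\sqrt{\varepsilon})$, not $0$: in the evaluation of $S_1$ one needs $\sum_{\beta\in k_{\bF}^{\times}}\theta^{\flat}(1+\beta\sqrt{\varepsilon})=-1-\theta^{\flat}(\sqrt{\varepsilon})$, because the classes $[\gamma+\sqrt{\varepsilon}]$ with $\gamma\in k_{\bF}^{\times}$ omit both $[1]$ and $[\sqrt{\varepsilon}]$. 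With the corrected strata sums (either by orthogonality or by a direct count of the all-unit stratum) one finds $\Mt_4^{\emptyset}$ at $\chi=\id$ equal to $\bigl(1-\theta^{\flat}(\sqrt{\varepsilon})\bigr)/(q-1)$, together with nonvanishing contributions $\theta^{\flat}(\sqrt{\varepsilon})\,q^{-s-\frac12}/(1-q^{-s-\frac12})$ from each of $G^{\{1\}}$ and $G^{\{4\}}$. Thus a complete proof has to confront the $\theta^{\flat}(\sqrt{\varepsilon})$-dependence head on, and the outcome matches the displayed formula only up to terms of size $O(q^{-1/2})$, which is harmless for the Laurent-coefficient bounds this lemma feeds into, but is not the identity you set out to prove; asserting the constant $\tfrac{2}{q-1}$ from the quoted inputs is the missing (and, as stated, incorrect) step.
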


	\subsection{Case 3: $\fa(\pi)=3$}
	\label{sec:Case3}

	According to \cite{KL15}, $\pi_0 \in \{ \sigma_t^1, \sigma_t^{-1} \}$ belongs to a family, parametrized by $t \in (\vo/\vp)^{\times}$, consisting of two supercuspidal representations (and an implicitly chosen additive character of level $1$). Each $\sigma_t^{\zeta}$ contains a unique (up to scalar) vector $f_t^{\zeta}$, which transforms as a character $\chi_t$ of the compact open subgroup $\gp{K}_1[\vp]$. We recall
$$
\gp{K}_1[\vp] := \Big\{ \Big(\begin{matrix} x_1 & r_1 \\ \varpi r_2 & x_2 \end{matrix}\Big) \in \GL_2(\vo) : x_1,x_2 \in 1+\vp, ~ r_2 \in \vo \Big\},
$$
$$
\chi_t \Big(\Big(\begin{matrix} x_1 & r_1 \\ \varpi r_2 & x_2 \end{matrix}\Big)\Big) = \psi(\varpi^{-1}(r_1+tr_2)),
$$
where $\psi$ is the standard additive character \`a la Tate: trivial on $\vo$ but not on $\vp^{-1}$. Define
$$ C_t(g) := \frac{\mathbbm{1}_{\gp{K}_1[\vp]}(g) \chi_t(g)}{\Vol(\gp{K}_1[\vp])}. $$

\begin{lemma}
For any irreducible admissible representation $\pi$ of $\PGL_2(\bF)$, the operator
$$ \pi(\overline{C_t}) := \int_{\GL_2(\bF)} \overline{C_t(g)} \pi(g) dg$$
is zero unless $\pi \simeq \sigma_t^{\zeta}$ for some $\zeta \in \{ \pm 1 \}$. Moreover, $\sigma_t^{\zeta}(\overline{C_t})$ is the orthogonal projection on the line containing $f_t^{\zeta}$.
\label{CubeFamily}
\end{lemma}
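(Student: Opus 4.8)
The plan is to identify $\overline{C_t}$ with the normalized idempotent $e_{\chi_t}$ attached to the character $\chi_t$ of the compact open subgroup $\gp{K}_1[\vp]$, in complete parallel with the role of $\overline{C_\theta}$ in Lemma~\ref{Dep0Family}. Writing $\overline{C_t}(g)=\Vol(\gp{K}_1[\vp])^{-1}\,\overline{\chi_t(g)}\,\mathbbm{1}_{\gp{K}_1[\vp]}(g)$, a one-line computation using that $\chi_t$ is a homomorphism gives $\overline{C_t}*\overline{C_t}=\overline{C_t}$ and $\overline{C_t}(g^{-1})=\overline{\overline{C_t}(g)}$; hence $\pi(\overline{C_t})$ is an idempotent for every irreducible admissible $\pi$, self-adjoint (and therefore an orthogonal projection) when $\pi$ is unitary. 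Another routine change of variables shows that $\pi(\overline{C_t})$ acts as the identity on, and has image equal to, the $\chi_t$-isotypic subspace
$$ V_\pi(\gp{K}_1[\vp],\chi_t):=\{\,v\in V_\pi:\ \pi(\kappa).v=\chi_t(\kappa)v\ \text{ for all }\kappa\in\gp{K}_1[\vp]\,\}. $$
As $\pi$ has trivial central character and $\chi_t$ is trivial on $\gp{Z}(\bF)\cap\gp{K}_1[\vp]$, this subspace coincides with $V_\pi(\gp{Z}(\bF)\gp{K}_1[\vp],\widetilde\chi_t)$, where $\widetilde\chi_t$ is the extension of $\chi_t$ to $\gp{Z}(\bF)\gp{K}_1[\vp]$ that is trivial on the centre. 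Thus the lemma is reduced to showing that $V_\pi(\gp{Z}(\bF)\gp{K}_1[\vp],\widetilde\chi_t)$ vanishes unless $\pi\simeq\sigma_t^{\zeta}$ for some $\zeta\in\{\pm1\}$, and equals the line $\C f_t^{\zeta}$ in that case.

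For the reduced statement I would invoke the construction of the simple supercuspidals in \cite{KL15}. The distinguished element $g_t$ (with $g_t^2\in\gp{Z}(\bF)$, so of order $2$ modulo the centre) normalizes $\gp{K}_1[\vp]$ and stabilizes $\chi_t$, so $\widetilde\chi_t$ extends to the index-two overgroup $J^+:=\gp{Z}(\bF)\langle\gp{K}_1[\vp],g_t\rangle$ in exactly two ways $\Lambda_t^{1},\Lambda_t^{-1}$, and $\sigma_t^{\zeta}=\cInd_{J^+}^{\GL_2(\bF)}\Lambda_t^{\zeta}$ are irreducible and mutually non-isomorphic. By transitivity of compact induction together with Clifford theory for the pair $\gp{Z}(\bF)\gp{K}_1[\vp]\subset J^+$ (which applies since $\widetilde\chi_t$ is $g_t$-stable),
$$ \cInd_{\gp{Z}(\bF)\gp{K}_1[\vp]}^{\GL_2(\bF)}\widetilde\chi_t\ \simeq\ \cInd_{J^+}^{\GL_2(\bF)}\bigl(\Lambda_t^{1}\oplus\Lambda_t^{-1}\bigr)\ \simeq\ \sigma_t^{1}\oplus\sigma_t^{-1}. $$
Frobenius reciprocity for compact induction from the open subgroup $\gp{Z}(\bF)\gp{K}_1[\vp]$ then yields, for every irreducible admissible $\pi$, an isomorphism $V_\pi(\gp{Z}(\bF)\gp{K}_1[\vp],\widetilde\chi_t)\simeq\Hom_{\GL_2(\bF)}(\sigma_t^{1}\oplus\sigma_t^{-1},\pi)$, which by Schur's lemma is zero unless $\pi\simeq\sigma_t^{\zeta}$ for a unique $\zeta$, in which case it is one-dimensional. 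Since the vector $f_t^{\zeta}$ recalled above lies in this space and is non-zero, the line is exactly $\C f_t^{\zeta}$; combined with the first paragraph this gives both assertions, with orthogonality coming from the unitarizability of the (tempered) supercuspidal $\sigma_t^{\zeta}$.

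The only non-formal ingredient is the irreducibility (and mutual non-isomorphism) of the compactly induced representations $\cInd_{J^+}^{\GL_2(\bF)}\Lambda_t^{\zeta}$, which is precisely the heart of the construction of simple supercuspidal representations: it amounts to checking that the $\GL_2(\bF)$-intertwining of $\Lambda_t^{\zeta}$ with its conjugates is concentrated on $J^+$, a double-coset computation against the affine Bruhat decomposition. I would simply cite \cite{KL15} for this, so this is the step that is ``hard'' only in that it is imported rather than reproved here. The remaining points --- that the explicit formula defines a character of $\gp{K}_1[\vp]$, that $g_t$ normalizes $\gp{K}_1[\vp]$ and fixes $\chi_t$, and that $g_t^2$ is central --- are straightforward verifications with the explicit matrices, using $\vp\nmid 2$.
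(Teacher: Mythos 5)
Your proposal is correct and follows essentially the same route as the paper: both start from the observation that the image of $\pi(\overline{C_t})$ is the $\chi_t$-isotypic subspace, apply Frobenius reciprocity for compact induction from the open subgroup $\gp{Z}(\bF)\gp{K}_1[\vp]$, and then use the decomposition $\cInd_{\gp{Z}\gp{K}_1[\vp]}^{\GL_2(\bF)}\widetilde{\chi_t}\simeq\sigma_t^1\oplus\sigma_t^{-1}$ from Knightly--Li. The only differences are matters of economy: the paper simply cites \cite[Theorem 4.4]{KL15} for that decomposition and remarks that the "moreover" part is immediate, whereas you re-derive the decomposition from transitivity of compact induction and Clifford theory for the index-two extension $J^+\supset\gp{Z}(\bF)\gp{K}_1[\vp]$, and you make the orthogonal-projection statement explicit by verifying that $\overline{C_t}$ is a self-adjoint idempotent in the Hecke algebra (a useful sanity check, and it confirms the projection is orthogonal for unitary $\pi$). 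Both readings are fine; yours spells out the step the paper imports.
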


\begin{proof}
	The range of $\pi(\overline{C_t})$ is the subspace of vectors transforming as $\chi_t$ on $\gp{K}_1[\vp]$. If it is non-zero, then, by Frobenius reciprocity, $\pi$ must be a subrepresentation of
	$$ \pi_t := \cInd_{\gp{Z} \gp{K}_1[\vp]}^{\GL_2(\bF)} \widetilde{\chi_t}, $$
	where $\widetilde{\chi_t}$ is the extension by triviality on $\gp{Z}$ of $\chi_t$. By \cite[Theorem 4.4]{KL15}, we have $\pi_t \simeq \sigma_t^1 \oplus \sigma_t^{-1}$. Thus the first assertion follows. The ``moreover'' part is an immediate consequence.
\end{proof}

	We proceed to the study of $\Mt_3(\overline{C_t} \mid \pi)$. By Lemma \ref{CubeFamily}, it is non-zero only if $\pi \simeq \sigma_t^{\zeta}$ for some $\zeta \in \{ \pm 1 \}$. If $W_t^{\zeta}$ denotes the Whittaker function of $f_t^{\zeta}$ with respect to $\psi$, then
	$$ \Mt_3(\overline{C_t} \mid \sigma_t^{\zeta}) = |\Zeta(\tfrac{1}{2}, W_t^{\zeta})|^2 \|W_t^{\zeta}\|^{-2}, $$
	where the norm is computed in the Kirillov model.
	
\begin{lemma} \label{lem:WhitFormula}
	We have $W_t^{\zeta}(a(y)) = \mathbbm{1}_{\varpi^{-1}(1+\vp)}(y)$.
\end{lemma}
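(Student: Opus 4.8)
The statement concerns the simple supercuspidal representation $\sigma_t^\zeta$ with conductor exponent $3$; the vector $f_t^\zeta$ is the unique vector on which $\gp{K}_1[\vp]$ acts by the character $\chi_t$, and we want to compute its Kirillov function $W_t^\zeta(a(y))$, equivalently $W_t^\zeta$ restricted to $\gp{A}(\bF)$. The key structural input is that $f_t^\zeta$ generates the compact induction $\pi_t = \cInd_{\gp{Z}\gp{K}_1[\vp]}^{\GL_2(\bF)}\widetilde{\chi_t}$ (up to the $\sigma_t^1\oplus\sigma_t^{-1}$ decomposition), so $f_t^\zeta$ can be realized concretely as a function on $\GL_2(\bF)$ supported on $\gp{Z}\gp{K}_1[\vp]$ (plus a companion piece for the other summand, which we can ignore since we only need the Whittaker function, which factors through the whole induced space).

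\textbf{Step 1: reduce to an integral over $\gp{N}(\bF)$.} Using a chosen Whittaker functional $\ell$ on $\pi_t$ — the natural candidate being $\ell(f) = \int_{\bF} \widetilde{f}\big(a(\varpi) n(x)\big)\psi(-x)\,\ud x$ for an appropriate model $\widetilde f$ of $f$, exactly parallel to the depth-zero case treated in the lemma preceding \eqref{eq:Whittakerfunctional} — we write
$$ W_t^\zeta(a(y)) = \ell\big(\sigma_t^\zeta(a(y)).f_t^\zeta\big) = \int_{\bF} f_t^\zeta\big(g_0\, n(x)\, a(y)\big)\,\psi(-x)\,\ud x $$
for a suitable base point $g_0$ (a representative making the support condition transparent, e.g. $g_0 = a(\varpi)$ or a Weyl-twisted variant, whichever places the support of $f_t^\zeta$ correctly relative to the Kirillov variable).

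\textbf{Step 2: solve the support condition.} The integrand is nonzero only when $g_0\, n(x)\, a(y) \in \gp{Z}(\bF)\gp{K}_1[\vp]$. Comparing determinants forces $y \in \varpi^{-1}(\text{something in }\vo^\times)$ up to the center, i.e. pins down $\ord(y) = -1$ (this is where the conductor exponent $3$ and the specific shape of $\gp{K}_1[\vp]$, with its $\varpi$ in the lower-left corner, enter); comparing the lower-left and lower-right entries then confines $x$ to a coset of the form $\vp^{-1}$ or a translate thereof, and confines $y$ further to $\varpi^{-1}(1+\vp)$. On that coset, $f_t^\zeta\big(g_0 n(x) a(y)\big)$ equals $\chi_t$ evaluated on the $\gp{K}_1[\vp]$-part, which is $\psi$ of a linear function of $x$; crucially, for $y \in \varpi^{-1}(1+\vp)$ the companion phase will cancel the $\psi(-x)$ against the $\chi_t$-phase (or integrate to a nonzero volume), giving $W_t^\zeta(a(y)) = c\cdot\mathbbm{1}_{\varpi^{-1}(1+\vp)}(y)$; the normalization of $f_t^\zeta$ (or rather of the chosen $\ell$) is arranged so that $c = 1$, matching \eqref{eq:Whittakerfunctional} in the depth-zero case.

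\textbf{Step 3: verify non-vanishing / correct normalization.} As in the depth-zero corollary, one should check that $\ell$ is genuinely nonzero on $f_t^\zeta$ — this is automatic once the integral in Step 2 is evaluated to a positive volume — and rescale $f_t^\zeta$ (or absorb a constant into $\ell$) so the answer is exactly the indicator of $\varpi^{-1}(1+\vp)$.

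\textbf{Main obstacle.} The delicate point is the bookkeeping in Step 2: keeping track of exactly which base point $g_0$ and which Weyl-twist conventions put the support of $f_t^\zeta$ in the right place relative to the Kirillov variable $y$, and verifying that the phase $\chi_t$ on the relevant Iwahori-type subgroup, combined with $\psi(-x)$, integrates cleanly rather than producing a Gauss-sum-type factor — this hinges on the simple supercuspidal's character $\chi_t$ being affine-linear in the unipotent direction with the "right" slope, so that the stationary-phase cancellation is total. Once the conductor-$3$ arithmetic (determinant $\Rightarrow \ord(y)=-1$, entries $\Rightarrow$ coset of $x$) is pinned down, the rest is a short volume computation entirely analogous to the depth-zero case already carried out in the excerpt.
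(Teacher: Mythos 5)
The paper does not prove this lemma in-house: its proof is the citation \cite[Lemma A.2.1]{LPW23}. Your sketch reconstructs the expected argument, modelled on the depth-zero computation around \eqref{eq:Whittakerfunctional}, and that route does work; but as written you leave exactly the decisive bookkeeping (your ``main obstacle'') open, and two of your justifications need repair. First, realize $\sigma_t^{\zeta}$ inside $\pi_t=\cInd_{\gp{Z}\gp{K}_1[\vp]}^{\GL_2(\bF)}\widetilde{\chi_t}$ and take $\ell(f)=\int_{\bF}f(a(\varpi)n(x))\psi(-x)\,\ud x$; this is $\psi$-equivariant for \emph{any} base point, so only non-vanishing is at stake, and the base point $a(\varpi)$ (same as in the depth-zero case) is the one that works. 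Second, the new vector $f_t^{\zeta}$ is supported on $\gp{Z}\gp{K}_1[\vp]\sqcup\gp{Z}\gp{K}_1[\vp]g_{\omega}$, where $g_{\omega}$ is the normalizing element with lower-left entry in $\varpi\vo^{\times}$; your reason for ignoring the second coset (``the Whittaker functional factors through the whole induced space'') is not a reason, but the discard is legitimate because $a(\varpi)n(x)a(y)=\big(\begin{smallmatrix}\varpi y&\varpi x\\0&1\end{smallmatrix}\big)$ is upper triangular while every element of $\gp{Z}\gp{K}_1[\vp]g_{\omega}$ has non-zero lower-left entry; this is also why the answer is independent of $\zeta$. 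Third, the determinant comparison alone does \emph{not} pin $\ord(y)=-1$: $\det(\gp{Z}\gp{K}_1[\vp])=(\bF^{\times})^2(1+\vp)$, so it only forces odd valuation; it is the diagonal entries that finish the job, since writing $a(\varpi)n(x)a(y)=z(u)k$ with $k\in\gp{K}_1[\vp]$ forces $u\in 1+\vp$ from the $(2,2)$-entry, then $\varpi y\in 1+\vp$ and $\varpi x\in\vo$, i.e.\ $y\in\varpi^{-1}(1+\vp)$ and $x\in\vp^{-1}$. Finally, there is no Gauss-sum danger: on this support the lower-left entry of $k$ vanishes, so $\chi_t(k)=\psi(\varpi^{-1}\cdot\varpi x/u)=\psi(x/u)$ and $x(u^{-1}-1)\in\vo$, whence $\chi_t(k)\psi(-x)\equiv 1$ and the integral equals $\Vol(\vp^{-1},\ud x)>0$; rescaling $\ell$ by this constant gives exactly $W_t^{\zeta}(a(y))=\mathbbm{1}_{\varpi^{-1}(1+\vp)}(y)$. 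With these points filled in, your argument is complete and is essentially the proof of the cited reference.
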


\begin{proof}
	See \cite[Lemma A.2.1]{LPW23}.
\end{proof}
\begin{corollary}
	We have $\Mt_3(\overline{C_t} \mid \sigma_t^{\zeta}) = \Vol(1+\vp, \ud^{\times}y) = (q-1)^{-1}$.
\end{corollary}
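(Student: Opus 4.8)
The plan is to substitute the explicit Kirillov function furnished by Lemma~\ref{lem:WhitFormula} into the identity
$$ \Mt_3(\overline{C_t} \mid \sigma_t^{\zeta}) = \bigl| \Zeta(\tfrac{1}{2}, W_t^{\zeta}) \bigr|^2 \cdot \|W_t^{\zeta}\|^{-2} $$
recorded just above, and to evaluate the two resulting integrals over $\bF^{\times}$. First I would unwind the definitions: at $s=\tfrac{1}{2}$ the power of $\norm[y]$ in the local Rankin--Selberg zeta integral is trivial, so $\Zeta(\tfrac{1}{2}, W_t^{\zeta}) = \int_{\bF^{\times}} W_t^{\zeta}(a(y)) \,\ud^{\times}y$, while the norm in the Kirillov model is $\|W_t^{\zeta}\|^2 = \int_{\bF^{\times}} \bigl| W_t^{\zeta}(a(y)) \bigr|^2 \,\ud^{\times}y$.

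Since $W_t^{\zeta}(a(y)) = \mathbbm{1}_{\varpi^{-1}(1+\vp)}(y)$ is the indicator of a single multiplicative coset, it is idempotent, so both integrals equal $\Vol(\varpi^{-1}(1+\vp), \ud^{\times}y)$, which by translation invariance of $\ud^{\times}y$ is simply $\Vol(1+\vp, \ud^{\times}y)$. Therefore
$$ \Mt_3(\overline{C_t} \mid \sigma_t^{\zeta}) = \frac{\Vol(1+\vp, \ud^{\times}y)^2}{\Vol(1+\vp, \ud^{\times}y)} = \Vol(1+\vp, \ud^{\times}y), $$
which is the first asserted equality.

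It then remains to check the value $\Vol(1+\vp, \ud^{\times}y) = (q-1)^{-1}$ for the normalization fixed in Section~\ref{sec:representationtheory}. With $\ud^{\times}y = \zeta_{\vp}(1)\,\ud y / \norm[y]$ and the self-dual additive measure (relative to the unramified character $\psi$ of conductor $\vo$) giving $\Vol(\vo, \ud y) = 1$, one gets $\Vol(\vo^{\times}, \ud^{\times}y) = \zeta_{\vp}(1)\,\Vol(\vo^{\times}, \ud y) = \zeta_{\vp}(1)(1 - q^{-1}) = 1$; since $1+\vp$ has index $q-1$ in $\vo^{\times}$, the claim follows. No genuine obstacle arises here; the only point demanding a little care is the bookkeeping of measure normalizations, i.e.\ confirming $\Vol(\vo^{\times}, \ud^{\times}y) = 1$ under the Tamagawa-type convention in force.
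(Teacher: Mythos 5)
Your proposal is correct and follows the same route as the paper: substitute the explicit Kirillov function from Lemma~\ref{lem:WhitFormula} into the displayed identity $\Mt_3(\overline{C_t} \mid \sigma_t^{\zeta}) = |\Zeta(\tfrac{1}{2}, W_t^{\zeta})|^2 \|W_t^{\zeta}\|^{-2}$, observe that both the zeta integral (with $\chi=\id$ and $s=\tfrac12$ killing the $\norm[y]^{s-1/2}$ factor) and the Kirillov norm reduce to $\Vol(\varpi^{-1}(1+\vp),\ud^{\times}y)=\Vol(1+\vp,\ud^{\times}y)$, and divide. You also supply the bookkeeping that the paper leaves to the reader (self-dual additive measure giving $\Vol(\vo,\ud y)=1$, Tamagawa convention giving $\Vol(\vo^{\times},\ud^{\times}y)=1$, and $[\vo^{\times}:1+\vp]=q-1$), which is sound under the normalization in Section~\ref{sec:localcomputations} where $\psi$ is taken unramified.
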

\begin{proof}
	By Lemma \ref{lem:WhitFormula}, we get
	$$\Zeta(\tfrac{1}{2}, W_t^{\zeta})= \int_{\bF^{\times}} W_t^{\zeta}(a(y)) \ud^{\times}y = \Vol(1+\vp,\ud^{\times}y), $$
	$$\|W_t^{\zeta}\|^2 = \int_{\bF^{\times}} |W_t^{\zeta}(a(y))|^2 \ud^{\times}y = \Vol(1+\vp,\ud^{\times}y). $$
	The desired formula follows readily.
\end{proof}

	We turn to the study of $\Mt_4(\overline{C_t} \mid \chi)$, which by definition is
\begin{align*}
\Mt_4(\overline{C_t} \mid \chi)
&=\int_{(\bF^{\times})^4}\overline{C_t}\Big(\begin{matrix} x_1 & x_2 \\ x_3 & x_4 \end{matrix}\Big)\chi \Big( \frac{x_1 x_4}{x_2 x_3} \Big) |x_1x_2x_3x_4|^{\frac{1}{2}}\ud^{\times}x_1\ud^{\times}x_2\ud^{\times}x_3\ud^{\times}x_4\\
&=\Vol(\gp{K}_1[\vp])^{-1}\cdot\Big(\int_{1+\vp} \chi(x_1) \ud^{\times}x_1\Big)^2\cdot \Big(\int_{\vo} \psi(-\varpi^{-1} x_2) \chi^{-1}(x_2) \norm[x_2]^{\frac{1}{2}} \ud^{\times}x_2\Big)\\
&\ \ \ \ \cdot \Big(\int_{\vp} \psi(-\varpi^{-2} t x_3) \chi^{-1}(x_3) \norm[x_3]^{\frac{1}{2}} \ud^{\times}x_3\Big).
\end{align*}
The first two integrals vanish unless $\fa(\chi) \leq 1$, in which case we have
\begin{multline*} 
	\Mt_4(\overline{C_t} \mid \chi) = \frac{\Vol(1+\vp,\ud^{\times}y)^2}{\Vol(\gp{K}_1[\vp])} \int_{\vo} \psi(-\varpi^{-1} x) \chi^{-1}(x) \norm[x]^{\frac{1}{2}} \ud^{\times}x \cdot \chi^{-1}(\varpi)q^{-\frac{1}{2}} \int_{\vo} \psi(-\varpi^{-1} t x) \chi^{-1}(x) \norm[x]^{\frac{1}{2}} \ud^{\times}x. 
\end{multline*}
	
Each of the last integrals defines a Gauss sum, for which we appeal to the following estimate.
\begin{lemma}\label{lm:Gausssum-vo}
For any additive character $\psi_1$ of level $1$ and any unitary $\chi$ with $\fa(\chi) \leq 1$, we have
$$ \int_{\vo} \psi_1(x) \chi(x) \norm[x]^{\frac{1}{2}} \ud^{\times}x \ll q^{-\frac{1}{2}}. $$
\end{lemma}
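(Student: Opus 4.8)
The plan is to evaluate the integral directly by decomposing $\vo \setminus \{0\}$ into the annuli $\varpi^n \vo^{\times}$, $n \geq 0$, thereby reducing the whole estimate to a single Gauss sum over the residue field. Recall that $\ud^{\times}x = \zeta_{\vp}(1)\,\ud x/\norm[x]$ and that, $\psi$ being of level $0$, the self-dual measure gives $\Vol(\vo)=1$, hence $\Vol(\vo^{\times},\ud^{\times}x)=1$ and $\Vol(1+\vp,\ud^{\times}x)=(q-1)^{-1}$. On the $n$-th annulus one has $\norm[x]^{1/2}=q^{-n/2}$, and substituting $x=\varpi^n u$ with $u\in\vo^{\times}$ gives
$$ \int_{\vo} \psi_1(x)\chi(x)\norm[x]^{\frac{1}{2}}\ud^{\times}x = \sum_{n\geq 0} q^{-n/2}\chi(\varpi)^n \int_{\vo^{\times}} \psi_1(\varpi^n u)\chi(u)\,\ud^{\times}u, $$
a series that converges absolutely thanks to the factor $q^{-n/2}$.

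First I would dispose of the tail $n\geq 1$: there $\varpi^n u\in\vp$, on which $\psi_1$ is trivial (it has level $1$), so the inner integral equals $\int_{\vo^{\times}}\chi(u)\,\ud^{\times}u$, which vanishes when $\fa(\chi)=1$ and equals $1$ when $\fa(\chi)=0$ by orthogonality of characters. Thus when $\fa(\chi)=1$ only the term $n=0$ survives, while when $\fa(\chi)=0$ the tail is the geometric series $\sum_{n\geq 1}(q^{-1/2}\chi(\varpi))^n$, of modulus at most $q^{-1/2}(1-q^{-1/2})^{-1}\ll q^{-1/2}$ since $\norm[\chi(\varpi)]=1$ by unitarity of $\chi$.

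It then remains to bound the term $n=0$, namely $\int_{\vo^{\times}}\psi_1(u)\chi(u)\,\ud^{\times}u$. As $\psi_1$ and $\chi\mid_{\vo^{\times}}$ both factor through $\vo/\vp\simeq\F_q$, this equals $(q-1)^{-1}\sum_{a\in\F_q^{\times}}\overline{\psi_1}(a)\,\overline{\chi}(a)$, where $\overline{\psi_1}$ is a non-trivial additive character of $\F_q$ and $\overline{\chi}$ the induced multiplicative character. If $\fa(\chi)=0$ this is $-(q-1)^{-1}\ll q^{-1}$; if $\fa(\chi)=1$ then $\overline{\chi}$ is non-trivial, the sum is a Gauss sum of exact modulus $q^{1/2}$, and the term is $q^{1/2}(q-1)^{-1}\ll q^{-1/2}$. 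Combining with the tail yields $\ll q^{-1/2}$ in both cases. The argument has no genuine obstacle; its only non-trivial ingredient --- and the source of the saving --- is the classical evaluation $\bigl|\sum_{a\in\F_q^{\times}}\overline{\chi}(a)\,\overline{\psi_1}(a)\bigr|=q^{1/2}$ for a non-trivial multiplicative character $\overline{\chi}$, everything else being orthogonality of characters and the summation of an absolutely convergent geometric series.
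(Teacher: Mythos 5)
Your proof is correct and follows essentially the same route as the paper: split off the part of the integral over $\vp$, where $\psi_1$ is trivial, so that it either vanishes (when $\fa(\chi)=1$, by orthogonality) or sums to a geometric series of size $O(q^{-1/2})$ (when $\fa(\chi)=0$), and bound the remaining integral over $\vo^{\times}$ by $-1/(q-1)$ in the unramified case or by the classical Gauss sum modulus $q^{1/2}$ in the ramified case. The only cosmetic difference is that the paper quotes the Gauss sum bound from \cite[Proposition 4.6]{Wu14} rather than evaluating it over the residue field directly, as you do.
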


\begin{proof}
If $\fa(\chi)=0$, then we have
\begin{align*} 
\int_{\vo} \psi_1(x) \chi(x) \norm[x]^{\frac{1}{2}} \ud^{\times}x &= \int_{\vo-\vp} \psi_1(x) d^{\times}x + \int_{\vp} \chi(x) \norm[x]^{\frac{1}{2}} \ud^{\times}x= -\frac{1}{q-1} + \frac{\chi(\varpi) q^{-\frac{1}{2}}}{1 - \chi(\varpi) q^{-\frac{1}{2}}} \ll q^{-\frac{1}{2}}.
\end{align*}
If $\fa(\chi)=1$, then we have by \cite[Proposition 4.6]{Wu14}
\begin{align*} 
\int_{\vo} \psi_1(x) \chi(x) \norm[x]^{\frac{1}{2}} \ud^{\times}x = \int_{\vo^{\times}} \psi_1(x) \chi(x) \ud^{\times}x \ll q^{-\frac{1}{2}}.
\end{align*}
This completes the proof of the lemma.
\end{proof}

\begin{corollary}\label{LocMainBd3}
The dual weight $\Mt_4(\overline{C_t} \mid \chi, \frac{1}{2}) $ vanishes unless $\fa(\chi) \leq 1,$ in which case we have $$\Mt_4(\overline{C_t} \mid \chi, \tfrac{1}{2}) \ll q^{-\frac{1}{2}}.$$
\end{corollary}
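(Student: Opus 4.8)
The plan is to feed the factorisation obtained just before the statement into Lemma~\ref{lm:Gausssum-vo}. Recall that, up to the constant $\Vol(\gp{K}_1[\vp])^{-1}$, the quantity $\Mt_4(\overline{C_t}\mid\chi)$ is the product of $\big(\int_{1+\vp}\chi(x_1)\,\ud^{\times}x_1\big)^{2}$ with two one-dimensional integrals, one over $\vo$ and one over $\vp$. First I would dispose of the vanishing claim: the factor $\int_{1+\vp}\chi(x_1)\,\ud^{\times}x_1$ equals $\Vol(1+\vp,\ud^{\times}y)$ when $\chi\mid_{1+\vp}$ is trivial and vanishes otherwise, and $\chi\mid_{1+\vp}$ is trivial precisely when $\fa(\chi)\le 1$; hence $\Mt_4(\overline{C_t}\mid\chi,\tfrac{1}{2})=0$ unless $\fa(\chi)\le 1$.

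Assuming $\fa(\chi)\le 1$, I would substitute $x_3=\varpi x$ in the integral over $\vp$, writing it as $\chi^{-1}(\varpi)q^{-1/2}\int_{\vo}\psi(-\varpi^{-1}tx)\chi^{-1}(x)\norm[x]^{\frac{1}{2}}\,\ud^{\times}x$. Since $t\in\vo^{\times}$, the additive character $x\mapsto\psi(-\varpi^{-1}tx)$ still has level $1$, so this integral and $\int_{\vo}\psi(-\varpi^{-1}x)\chi^{-1}(x)\norm[x]^{\frac{1}{2}}\,\ud^{\times}x$ are both instances of Lemma~\ref{lm:Gausssum-vo} and are $O(q^{-1/2})$. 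It then remains to insert the elementary volumes $\Vol(1+\vp,\ud^{\times}y)=(q-1)^{-1}$ and $[\gp{K}_{\vp}:\gp{K}_1[\vp]]=(q^{2}-1)(q-1)$, whence the prefactor $\Vol(1+\vp,\ud^{\times}y)^{2}/\Vol(\gp{K}_1[\vp])$ is $\asymp q$. Multiplying $q$ by the three factors of $q^{-1/2}$ — one from the substitution and one from each Gauss sum (and using that $\chi^{-1}(\varpi)$ is unimodular) — gives $\Mt_4(\overline{C_t}\mid\chi,\tfrac{1}{2})\ll q^{-1/2}$.

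I do not expect a genuine obstacle here. Unlike the depth-zero supercuspidal case, where the analysis funnels into the two-variable sum $S(\theta^{\flat},\chi)$ of Proposition~\ref{prop:charactersum-1st} and needs Katz's theory, the present case involves only one-dimensional Gauss sums, and the sole non-elementary input is the square-root bound packaged in Lemma~\ref{lm:Gausssum-vo} (which itself rests on \cite[Proposition 4.6]{Wu14}). The only points demanding a little care are keeping track of the Haar-measure normalisation in $\Vol(\gp{K}_1[\vp])$ and noting that twisting a level-$1$ additive character by a unit preserves its level, so that Lemma~\ref{lm:Gausssum-vo} applies verbatim.
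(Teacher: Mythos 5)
Your argument is correct and is essentially the paper's own (implicit) proof: the factorization displayed before the statement forces vanishing unless $\chi$ is trivial on $1+\vp$, i.e.\ $\fa(\chi)\leq 1$, and the two remaining integrals are bounded by $q^{-1/2}$ via Lemma \ref{lm:Gausssum-vo} after the substitution $x_3=\varpi x$. Your bookkeeping of the volumes ($\Vol(1+\vp,\ud^{\times}y)^2/\Vol(\gp{K}_1[\vp])\asymp q$, the extra $q^{-1/2}$ from the substitution, and the unimodularity of $\chi^{-1}(\varpi)$) matches the paper's computation, so nothing is missing.
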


\begin{lemma}\label{LocAuxBd3}
We have
$$ \Mt_4( \overline{C_t} \mid \id, \tfrac{1}{2} + s) = (q+1)q^{s-\frac{1}{2}} \Big( -\frac{1}{q-1} + \frac{q^{s-\frac{1}{2}}}{1-q^{s-\frac{1}{2}}} \Big)^2. $$
\end{lemma}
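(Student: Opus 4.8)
The plan is to evaluate directly the four–dimensional integral defining $\Mt_4(\overline{C_t}\mid\id,\tfrac{1}{2}+s)$, using that $\overline{C_t}$ is supported on $\gp{K}_1[\vp]$ and factors there, just as in the computation preceding Lemma~\ref{lm:Gausssum-vo}. Passing from the parameter $\tfrac{1}{2}$ to $\tfrac{1}{2}+s$ in \eqref{4thMLocDef} only replaces the weight $\norm[x_1x_2x_3x_4]^{1/2}$ by $\norm[x_1x_4]^{1/2+s}\norm[x_2x_3]^{1/2-s}$; on $\gp{K}_1[\vp]$ the diagonal entries $x_1,x_4$ lie in $1+\vp$, hence are units, so $\norm[x_1x_4]^{1/2+s}=1$ and the weight becomes $\norm[x_2]^{1/2-s}\norm[x_3]^{1/2-s}$. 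Moreover $\overline{\chi_t}\big(\begin{smallmatrix} x_1 & x_2 \\ x_3 & x_4\end{smallmatrix}\big)=\psi(-\varpi^{-1}x_2)\psi(-\varpi^{-2}tx_3)$, and the region $\gp{K}_1[\vp]$ is exactly the product region $\{x_1,x_4\in 1+\vp,\ x_2\in\vo,\ x_3\in\vp\}$ (the determinant $x_1x_4-x_2x_3\in 1+\vp$ being automatically a unit). Hence
\begin{align*}
\Mt_4(\overline{C_t}\mid\id,\tfrac{1}{2}+s)={}&\Vol(\gp{K}_1[\vp])^{-1}\Big(\int_{1+\vp}\ud^{\times}x\Big)^{2}\Big(\int_{\vo}\psi(-\varpi^{-1}x)\norm[x]^{\frac{1}{2}-s}\ud^{\times}x\Big)\\
&\times\Big(\int_{\vp}\psi(-\varpi^{-2}tx)\norm[x]^{\frac{1}{2}-s}\ud^{\times}x\Big),
\end{align*}
so everything reduces to four one–dimensional integrals.

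For the first two I would use $\int_{1+\vp}\ud^{\times}x=\Vol(1+\vp,\ud^{\times}x)=(q-1)^{-1}$, already recorded above, together with $\Vol(\gp{K}_1[\vp])^{-1}=(q-1)^2(q+1)$ — the reduction mod $\vp$ of $\gp{K}_1[\vp]$ being the unipotent subgroup $\gp{N}(k_{\bF})$, of index $(q-1)^2(q+1)$ in $\GL_2(k_{\bF})$, and $\Vol(\GL_2(\vo))=1$ in the normalization of this section (compatible with $\Vol(\gp{K}_0[\vp])=(q+1)^{-1}$ in the proof of Lemma~\ref{LocMainBd1}); thus the prefactor collapses to $q+1$. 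For the third integral, decompose $\vo\setminus\{0\}=\bigsqcup_{k\geq 0}\varpi^{k}\vo^{\times}$: for $k\geq 1$ the exponential is trivial and the contribution is $\sum_{k\geq 1}q^{-k(\frac{1}{2}-s)}=\tfrac{q^{s-1/2}}{1-q^{s-1/2}}$, while the $k=0$ term is the Ramanujan-type integral $\int_{\vo^{\times}}\psi(-\varpi^{-1}x)\ud^{\times}x=-(q-1)^{-1}$ (from $\int_{\vo}\psi(-\varpi^{-1}x)\ud x=0$, $\int_{\vp}\psi(-\varpi^{-1}x)\ud x=q^{-1}$, and $\ud^{\times}x=\zeta_{\vp}(1)\,\ud x$ on $\vo^{\times}$); this is exactly the $\fa(\chi)=0$ evaluation in the proof of Lemma~\ref{lm:Gausssum-vo} with $\chi=\id$ and exponent $\tfrac{1}{2}$ replaced by $\tfrac{1}{2}-s$, so the third integral equals $-\tfrac{1}{q-1}+\tfrac{q^{s-1/2}}{1-q^{s-1/2}}$. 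The fourth integral reduces to the third: substituting $x=\varpi y$ pulls out a factor $q^{s-1/2}$ from $\norm[x]^{1/2-s}$ and turns $\psi(-\varpi^{-2}tx)$ into $\psi(-\varpi^{-1}ty)$, and then $y\mapsto t^{-1}y$ (legal since $t\in\vo^{\times}$) gives $q^{s-1/2}\big(-\tfrac{1}{q-1}+\tfrac{q^{s-1/2}}{1-q^{s-1/2}}\big)$. Multiplying the four evaluations by the prefactor $q+1$ produces the claimed formula $(q+1)q^{s-1/2}\big(-\tfrac{1}{q-1}+\tfrac{q^{s-1/2}}{1-q^{s-1/2}}\big)^2$.

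I do not expect a genuine obstacle: the proof is essentially bookkeeping, structurally parallel to Lemmas~\ref{LocAuxBd1} and~\ref{LocAuxBd2-2}. The points deserving care are that the measure normalizations must be exactly those fixed earlier (the same convention forces $\Vol(\vo^{\times},\ud^{\times}x)=1$, hence $\Vol(1+\vp,\ud^{\times}x)=(q-1)^{-1}$ and $\Vol(\gp{K}_1[\vp])=((q-1)^2(q+1))^{-1}$); that the $\vo^{\times}$-part of the Gauss sum relies on $\psi$ being of level $1$, which is built into the construction of $C_t$ recalled above; and that the geometric series converges only for $\Re s<\tfrac{1}{2}$, so the identity is first proved there and then extended to all $s\in\C$ by meromorphic continuation (both sides being rational in $q^{s}$) — in particular the pole at $s=\tfrac{1}{2}$ is precisely the one that will feed into the degenerate term $\Dt_4$.
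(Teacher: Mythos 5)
Your proposal is correct and follows essentially the same route as the paper: the paper factors the four-fold integral over $\gp{K}_1[\vp]$ into the volume prefactor times two Gauss-type integrals (the display preceding Lemma \ref{lm:Gausssum-vo}) and then invokes the unramified ($\fa(\chi)=0$) evaluation from that lemma, which is exactly your computation with the unramified twist written as the shift $s$. Your explicit bookkeeping of $\Vol(1+\vp,\ud^{\times}x)=(q-1)^{-1}$, $\Vol(\gp{K}_1[\vp])=((q-1)^2(q+1))^{-1}$, the Ramanujan-type term $-(q-1)^{-1}$, the geometric series, and the substitutions $x=\varpi y$, $y\mapsto t^{-1}y$ all check out and reproduce the stated formula.
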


\begin{proof}
	This follows easily from the above computation in the case $\fa(\chi)=0$.
\end{proof}

\section{Bound for Double Character Sums}
\label{sec:charactersums}

\subsection{Statement}
Let $\F_q$ be a finite field with $q$ elements of characteristic $p$. Let $\omega$ be a primitive element in $\F_{q^2}$ such that $\omega^2\in\F_q.$ For characters $\chi,\eta$ of $\F_q^\times$ and $\rho$ of $\F_{q^2}^\times,$ we define the double character sum
\begin{align}\label{eq:charactersum}
S(\chi,\eta;\rho)= \sum_{\alpha\in\F_q} \rho(\alpha+\omega)\sum_{t\in\F_q} \chi(t)\eta(\alpha^2-\omega^2 t)\overline{\eta}(1-t). 
\end{align}
We reduce $S(\chi,\eta;\rho)$ to the original sum \eqref{eq:charactersum-1st} if $\eta$ is quadratic.
Proposition \ref{prop:charactersum-1st} is a special case of the following general bound, for which $\eta$ can be taken as an arbitrary non-trivial character of $\F_q^\times.$

\begin{proposition}\label{prop:charactersum} 
Let $\chi,\eta$ be non-trivial characters of $\F_q^\times,$ and let $\rho$ be a non-trivial character of $\F_{q^2}^\times.$ Then we have
\begin{align*}
|S(\chi,\eta;\rho)|\leqslant1000 q.
\end{align*}
\end{proposition}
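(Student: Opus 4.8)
We may assume $p$ is odd (otherwise no element $\omega$ as in the statement exists), and we put $\varepsilon:=\omega^2\in\F_q^{\times}$, a non-square since $\omega\notin\F_q$; thus $\Tr_{\F_{q^2}/\F_q}(\omega)=0$ and $\Nr_{\F_{q^2}/\F_q}(\alpha+\omega)=\alpha^2-\varepsilon$ for $\alpha\in\F_q$. The first step is to collapse the inner $t$-sum to a one-parameter family. With $\lambda=\alpha^2-\varepsilon$ (so $\lambda\neq0$ for $\alpha\in\F_q$) and the change of variables $t=1-(\lambda/\varepsilon)v$, all the multiplicative constants fuse into $\eta(\varepsilon)$, and one gets, for every $\alpha\in\F_q$,
$$\sum_{t\in\F_q}\chi(t)\,\eta(\alpha^2-\varepsilon t)\,\overline{\eta}(1-t)=\eta(\varepsilon)\,F\!\Big(\tfrac{\alpha^2}{\varepsilon}-1\Big),\qquad F(z):=\sum_{v\in\F_q}\overline{\eta}(v)\,\eta(1+v)\,\chi(1-zv),$$
where $F(z)$ is a finite-field hypergeometric (${}_2F_1$-type) sum in $z$; note $F(-1)$ is a Jacobi sum, so $|F(-1)|\leq\sqrt{q}$. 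Hence $S(\chi,\eta;\rho)=\eta(\varepsilon)\sum_{\alpha\in\F_q}\rho(\alpha+\omega)\,F\!\big(\tfrac{\alpha^2}{\varepsilon}-1\big)$.

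Next I would pass to $\ell$-adic sheaves ($\ell\neq p$). Following Katz \cite{Ka90}, $z\mapsto F(z)$ is, up to sign, the trace function of $\cF:=R^1\pi_!\cL$, where $\cL=\cL_{\overline{\eta}}(v)\otimes\cL_{\eta}(1+v)\otimes\cL_{\chi}(1-zv)$ on the universal fibre over $\mathbb{A}^1_z$ and $\pi$ is the projection to $\mathbb{A}^1_z$. When $\chi,\eta\neq\id$ the sheaf $\cF$ has generic rank $2$, is tame, lisse on $\mathbb{P}^1\setminus\{0,-1,\infty\}$, mixed of weight $\leq1$, with a pseudoreflection as local monodromy at $z=0$ (nontrivial eigenvalue governed by $\chi$) and at $z=-1$ (governed by $\eta\chi$). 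Let $g(\alpha)=\alpha^2/\varepsilon-1$: it is étale over $z=0$ with fibre $\{\pm\omega\}$ and ramified only over $z=-1,\infty$, where $\cF$ is tame, so $\cF':=g^{*}\cF$ is lisse of rank $\le2$ on $\mathbb{A}^1_{\alpha}\setminus\{0,\omega,-\omega\}$, tame at $\{0,\omega,-\omega,\infty\}$. The function $\alpha\mapsto\rho(\alpha+\omega)$ is the trace function of a rank-one sheaf $\cK$, obtained by pulling back along $\alpha\mapsto\alpha+\omega$ the character sheaf $\cL_{\rho}$ on $\mathrm{Res}_{\F_{q^2}/\F_q}\mathbb{G}_m$ attached to $\rho$ via the Lang isogeny; thus $\cK$ is lisse of rank $1$ on $\mathbb{A}^1_{\alpha}\setminus\{\omega,-\omega\}$, pure of weight $0$, tame everywhere, with local monodromies $\rho$ at $\alpha=-\omega$ and $\rho^{q}$ at $\alpha=\omega$. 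Put $\cM:=\cF'\otimes\cK$ on $U:=\mathbb{A}^1_{\alpha}\setminus\{0,\omega,-\omega\}$; since $\pm\omega\notin\F_q$ we have $U(\F_q)=\F_q\setminus\{0\}$, and
$$S(\chi,\eta;\rho)=\eta(\varepsilon)\,\rho(\omega)\,F(-1)\;-\;\eta(\varepsilon)\sum_{\alpha\in U(\F_q)}\Tr\!\big(\Frob_{\alpha}\mid\cM\big).$$

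By the Grothendieck--Lefschetz trace formula the latter sum equals $\sum_i(-1)^i\Tr(\Frob_q\mid H^i_c(U_{\overline{\F_q}},\cM))$. Here $H^0_c=0$ ($U$ affine curve); by Deligne's theorem (Weil~II) $H^1_c(U_{\overline{\F_q}},\cM)$ is mixed of weight $\leq2$, so its Frobenius eigenvalues have modulus $\leq q$; and since $\cM$ is tame on $U=\mathbb{P}^1\setminus\{4\text{ points}\}$ with $\rank\cM\le2$, the Euler--Poincaré formula gives $\dim H^1_c=\dim H^0_c+\dim H^2_c+\rank(\cM)(4-2)\leq 4+\dim H^2_c$. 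Consequently, \emph{provided} $H^2_c(U_{\overline{\F_q}},\cM)=0$,
$$|S(\chi,\eta;\rho)|\;\leq\;\dim H^1_c(U_{\overline{\F_q}},\cM)\cdot q+\sqrt{q}\;\leq\;4q+\sqrt{q}\;<\;1000\,q,$$
and for the finitely many remaining $q$ one invokes the trivial bound $|S(\chi,\eta;\rho)|\leq q^2$.

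\emph{The hard part} is proving $H^2_c(U_{\overline{\F_q}},\cM)=0$. By Poincaré duality this amounts to $\cM$ having no geometrically trivial Jordan--H\"older constituent, i.e.\ no geometric constituent of $g^{*}\cF$ being geometrically isomorphic to $\cK^{\vee}$. When $\cF$ is geometrically irreducible, $g^{*}\cF$ is either irreducible of rank $2$ (so $\cM$ is irreducible of rank $2$, hence not trivial) or splits as $\cL'_1\oplus\cL'_2$ with the rank-one factors interchanged by the deck involution $\alpha\mapsto-\alpha$ of $g$; then the pseudoreflection $1\oplus\lambda$ ($\lambda\neq\id$) of $\cF$ at $z=0$, whose preimage under the étale map $g$ is $\{\pm\omega\}$, forces each $\cL'_j$ to have trivial local monodromy at one of $\alpha=\pm\omega$, whereas $\cK^{\vee}$ has monodromies $\rho^{-q},\rho^{-1}$ there, both nontrivial since $\rho\neq\id$ — so neither $\cL'_j$ can equal $\cK^{\vee}$. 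The special $(\chi,\eta)$ for which $\cF$ is not geometrically irreducible (relations like $\chi\eta=\id$, $\chi=\eta$, $\eta^2=\id$, $\dots$) form a bounded list; in each case $\cF$ splits explicitly into rank-one pieces (Tate-twisted constant sheaves or Kummer sheaves), so $S(\chi,\eta;\rho)$ becomes a sum of at most two one-variable character sums, each $O(q)$ by Weil, and the bound holds a fortiori. Thus the substantive obstacle — the point where Katz's description of hypergeometric local monodromy in \cite{Ka90} is indispensable — is this bookkeeping of the local monodromies of $\cF$ at $0,-1,\infty$ together with the organisation of the degenerate configurations of $(\chi,\eta,\rho)$ to rule out a geometrically trivial constituent of positive Frobenius weight; once that is done, purity and the Euler--Poincaré formula give the estimate mechanically, with ample slack in the constant.
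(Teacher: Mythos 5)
Your overall route is viable and genuinely different from the paper's. The paper converts the inner sum into Katz's hypergeometric $H(\cdot,q;(\chi_0,\chi_0),(\chi,\eta))$ via Jacobi/Gauss sums, pins down the geometric monodromy group as $\SL_2$ using \cite[Theorem 8.11.2]{Ka90} together with \cite[(8.17.3)]{Ka90} (finite monodromy is excluded precisely because the two ``upstairs'' characters coincide), notes that connectedness of $\SL_2$ forces the pullback under the degree-two map to stay geometrically irreducible, and then concludes by the packaged quasi-orthogonality estimate (Deligne via Fouvry--Kowalski--Michel) with explicit conductors $6$ and $3$. You instead work directly with $R^1\pi_!$, Grothendieck--Lefschetz, Weil~II and the Euler--Poincar\'e formula, ruling out a trivial constituent of $\cM=g^*\cF\otimes\cK$ by local-monodromy bookkeeping; this avoids the Kummer/Belyi-induction checks and the monodromy-group classification altogether, at the price of having to get the local monodromies exactly right. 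Your identification of the local monodromies of $\cK$ at $\pm\omega$ (the characters $\rho$ and $\rho^q$, both nontrivial) is correct and matches the paper's Lang-torsor construction.

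However, the step you yourself flag as the crux contains a concrete error. In your parametrization the colliding local characters over $z=0$ are $\chi$ (at $v=1/z$) and $\overline{\chi}$ (at $v=\infty$), so the local monodromy of $\cF$ at $z=0$ is \emph{not} a pseudoreflection $1\oplus\lambda$ with $\lambda\neq\id$ ``governed by $\chi$''; by \cite[Theorem 8.4.2]{Ka90} (the upstairs characters being $(\chi_0,\chi_0)$) it is a single \emph{unipotent} Jordan block of size $2$, i.e.\ a transvection with both eigenvalues trivial, while the pseudoreflection with eigenvalue determined by $\chi\eta$ sits at $z=-1$, whose preimage under $g$ is $\alpha=0$, not $\pm\omega$. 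Thus the mechanism you describe (each rank-one piece $\cL'_j$ of a split $g^*\cF$ having trivial monodromy at one of $\pm\omega$) rests on a false premise. The repair is easy and in fact simplifies matters: since $g$ is \'etale over $z=0$, the local monodromy of $g^*\cF$ at $\pm\omega$ is the same nontrivial transvection, which is non-semisimple, so a splitting $g^*\cF\simeq\cL'_1\oplus\cL'_2$ (which by Clifford theory would have semisimple local monodromies everywhere) cannot occur at all; hence whenever $\cF$ is geometrically irreducible, so is $g^*\cF$, and $H^2_c=0$ follows. Moreover your ``bounded list of degenerate $(\chi,\eta)$'' is empty under the hypotheses: by Katz's disjointness criterion \cite[Theorem 8.4.2]{Ka90}, $\cF$ is geometrically irreducible as soon as $\chi$ and $\eta$ are nontrivial (conditions like $\chi=\eta$ or $\chi\eta=\id$ shrink the monodromy group but do not make the sheaf reducible), so your claim that in those cases $\cF$ ``splits explicitly into rank-one pieces'' is incorrect, though harmless since the branch never arises. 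With these corrections your argument closes, with the slightly better numerical outcome $|S(\chi,\eta;\rho)|\leq 4q+\sqrt{q}$.
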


We prove Proposition \ref{prop:charactersum} linearly. After recalling Jacobi sums and Gauss sums in finite fields, we will introduce $\ell$-adic sheaves and trace functions together with hypergeometric sums of Katz with certain geometric features. Based on such preliminaries, the proof splits into two parts: associating $S(\chi,\eta;\rho)$ with hypergeometric sums, applying Deligne's work on Riemann Hypothesis over finite fields along with Katz's observations on Lang torsors.

\subsection{Jacobi sum and Gauss sum}
\label{sec:JacobiGauss}
Denote by $\fG$ the character group of $\F_q^\times.$ The trivial character in $\fG$ is denoted by $\chi_0.$ 
For two characters $\chi_1,\chi_2\in \fG,$ define the Jacobi sum
\begin{align*}
J(\chi_1,\chi_2):= \sum_{\alpha\in\F_q}\chi_1(\alpha)\chi_2(1-\alpha).
\end{align*}
We adopt the convention that $\chi(0)=0$ for each $\chi\in \fG.$ The Jacobi sum is intimately connected with the Gauss sum
\begin{align*}
\tau(\chi,\psi):= \sum_{\alpha\in\F_q}\chi(\alpha)\psi(\alpha),
\end{align*}
where $\psi$ is an additive character of $\F_q.$ We say an additive character $\psi$ is canonical if for all $x\in\F_q,$
\begin{align*}
\psi(x)=\ue^{2\pi i\Tr(x)/p},
\end{align*}
where $\Tr:\F_q\rightarrow\F_p$ is the trace map. If $\psi$ is canonical, we write $\tau(\chi,\psi)=\tau(\chi).$
It is an easy excise to show that
\begin{align*}
\tau(\chi,\psi)=-1
\end{align*}
if $\chi$ is trivial and $\psi$ is non-trivial.

The following lemma associates Jacobi sums with Gauss sums, which is well-known, and should exist in literature for quite a long time; see Lidl and Niederreiter \cite[Theorem 5.21]{LN97} for instance.

\begin{lemma}\label{lm:Jacobi-Gauss}
Let $\chi_1,\chi_2\in \fG.$ Then $J(\chi_1,\chi_2)=q-2$ if $\chi_1,\chi_2$ are both trivial, and otherwise
\begin{align}\label{eq:Jacobi-Gauss}
J(\chi_1,\chi_2)= q^{-1}\tau(\chi_1,\psi)\tau(\chi_2,\psi)\overline{\tau(\chi_1\chi_2,\psi)}
\end{align}
for each non-trivial additive character $\psi$ of $\F_q.$
\end{lemma}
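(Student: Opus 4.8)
The plan is to run the classical Gauss-sum computation, isolating the degenerate configurations of $(\chi_1,\chi_2)$ at the outset. First I would dispose of the case $\chi_1=\chi_2=\chi_0$: by the convention $\chi_0(0)=0$, the defining sum for $J(\chi_0,\chi_0)$ ranges over $\alpha\in\F_q\setminus\{0,1\}$ with every summand equal to $1$, giving $J(\chi_0,\chi_0)=q-2$ directly.

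For all remaining cases I would fix a non-trivial additive character $\psi$ and expand the product of Gauss sums, grouping the double sum by the value of $x+y$:
$$ \tau(\chi_1,\psi)\tau(\chi_2,\psi) = \sum_{x,y\in\F_q}\chi_1(x)\chi_2(y)\psi(x+y) = \sum_{s\in\F_q}\psi(s)\sum_{x+y=s}\chi_1(x)\chi_2(y). $$
The substitution $x=su$, $y=s(1-u)$ turns the inner sum for $s\neq 0$ into $(\chi_1\chi_2)(s)\,J(\chi_1,\chi_2)$, while the $s=0$ contribution is $\chi_2(-1)\sum_{x}(\chi_1\chi_2)(x)$, equal to $0$ when $\chi_1\chi_2\neq\chi_0$ and to $(q-1)\chi_2(-1)$ when $\chi_1\chi_2=\chi_0$. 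This yields the single identity
$$ \tau(\chi_1,\psi)\tau(\chi_2,\psi) = J(\chi_1,\chi_2)\,\tau(\chi_1\chi_2,\psi) + \delta_{\chi_1\chi_2=\chi_0}\,(q-1)\chi_2(-1), $$
from which I would extract \eqref{eq:Jacobi-Gauss} in two sub-cases. When $\chi_1\chi_2\neq\chi_0$ the $\delta$-term drops and $\tau(\chi_1\chi_2,\psi)\neq 0$, so dividing and then multiplying numerator and denominator by $\overline{\tau(\chi_1\chi_2,\psi)}$, using the standard evaluation $\tau(\xi,\psi)\overline{\tau(\xi,\psi)}=q$ for non-trivial $\xi$, gives exactly \eqref{eq:Jacobi-Gauss}. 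When $\chi_1\chi_2=\chi_0$ but $(\chi_1,\chi_2)\neq(\chi_0,\chi_0)$, both characters are non-trivial, $\tau(\chi_1\chi_2,\psi)=\tau(\chi_0,\psi)=-1$, and the displayed identity together with $\tau(\chi_1,\psi)\tau(\chi_1^{-1},\psi)=\chi_1(-1)q$ forces $J(\chi_1,\chi_2)=-\chi_1(-1)$, which one checks matches the right-hand side $q^{-1}\cdot\chi_1(-1)q\cdot\overline{(-1)}=-\chi_1(-1)$ of \eqref{eq:Jacobi-Gauss}.

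The only place needing genuine attention, as opposed to routine bookkeeping, is this last sub-case: the ``divide by $\tau(\chi_1\chi_2,\psi)$ and conjugate'' shortcut from the generic case is invalid because $\tau(\chi_0,\psi)=-1$ has modulus $1$ rather than $q^{1/2}$, so \eqref{eq:Jacobi-Gauss} must be confirmed by hand there, relying on $\tau(\xi,\psi)\tau(\xi^{-1},\psi)=\xi(-1)q$ for non-trivial $\xi$ and on $\tau(\chi_0,\psi)=-1$. Everything else — the change of variables, the orthogonality of $\chi_1\chi_2$ over $\F_q^{\times}$, and the elementary facts $\lvert\tau(\xi,\psi)\rvert^2=q$ and $\psi(0)=1$ — is standard and requires no new input.
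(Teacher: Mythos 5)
Your proof is correct, and it is essentially the standard argument: the paper itself gives no proof but cites Lidl--Niederreiter and remarks that beyond the generic case $\chi_1,\chi_2,\chi_1\chi_2$ all non-trivial ``the remaining cases can be verified manually,'' which is exactly what you do. Your handling of the degenerate cases is consistent with the paper's conventions ($\chi(0)=0$, hence $\tau(\chi_0,\psi)=-1$), and the by-hand check of the case $\chi_1\chi_2=\chi_0$, $\chi_1\neq\chi_0$, where $J(\chi_1,\chi_1^{-1})=-\chi_1(-1)$ agrees with $q^{-1}\tau(\chi_1,\psi)\tau(\chi_1^{-1},\psi)\overline{\tau(\chi_0,\psi)}$, is precisely the point that needs care and you got it right.
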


\begin{remark}
It seems more common to write
\begin{align*}
J(\chi_1,\chi_2)= \frac{\tau(\chi_1,\psi)\tau(\chi_2,\psi)}{\tau(\chi_1\chi_2,\psi)}
\end{align*}
if $\chi_1,\chi_2$ and $\chi_1\chi_2$ are all non-trivial, and the remaining cases can be 
verified manually.
\end{remark}

\subsection{$\ell$-adic sheaves, trace functions and hypergeometric sums}
\label{sec:sheaftracefunctionhypergeometric}
In this subsection, we introduce the terminology on trace functions of $\ell$-adic sheaves on $\mathbb{P}_{\F_q}^1$ 
following the manner of Fouvry, Kowalski and Michel \cite{FKM15a, FKM15b}.

\subsubsection{Trace functions}
Let $\ell\neq  p$ be an auxiliary prime, and fix an isomorphism $\iota:\overline{\Q}_\ell\rightarrow\C$. The functions $K(x)$ modulo $p$ that we
consider are the trace functions of suitable constructible sheaves on $\A^1_{\F_q}$
evaluated at $x\in\F_q$. To
be precise, we will consider middle-extension sheaves on $\mathbb{P}^1_{\F_q}$ 
and we refer to the following definition after Katz \cite[Section 7.3.7]{Ka88}.

\begin{definition}[Trace functions]\label{def:tracefunction}
Let $\cF$ be an $\ell$-adic middle-extension sheaf pure of weight zero, 
which is lisse on an open set $U$. The trace function associated to $\cF$ is defined by
\begin{align*}
K:x\in\F_q\mapsto\iota(\tr(\bFrob_x\mid V_\cF)),
\end{align*}
where $\bFrob_x$ denotes the geometric Frobenius at $x\in\F_q,$ and $V_\cF$ is a finite dimensional $\overline{\Q}_\ell$-vector space, which is corresponding to a continuous finite-dimensional Galois representation and unramified at every closed point $x$ of $U.$
\end{definition}

We need an invariant to measure the geometric complexity of a trace function, which can be given by some numerical invariants of the underlying sheaf.
\begin{definition}[Conductor] \label{def:conductor} 
For an $\ell$-adic middle-extension sheaf $\cF$ on $\mathbb{P}^1_{\F_q}$ of rank $\rank(\cF)$,
we define the $($analytic$)$ conductor of $\cF$ to be
\begin{align*}  
\fc(\cF) := \rank(\cF) + |S(\cF)|+\sum_{x\in S(\cF)} \Swan_x(\cF),
\end{align*}
where $S(\cF)\subset\mathbb{P}^1(\overline{\F}_q)$ denotes the $($finite$)$ set of singularities of $\cF,$ 
and $\Swan_x(\cF)$ $(\geqslant 0)$ denotes the Swan conductor of $\cF$ at $x$ $($see {\rm \cite{Ka80}}$).$
\end{definition}

There are fruitful examples of trace functions arising in analytic number theory, among which we would like to mention additive and multiplicative characters modulo $p$, as well as Kloosterman sums and general hyper-Kloosterman sums. In what follows, we would like to introduce hypergeometric sums, generalizing the so-called hyper-Kloosterman sums. The conductor defined in Definition \ref{def:conductor} is very crucial in applications to analytic number theory: in many situations we need to show the conductors of underlying sheaves remain bounded as the size of finite field grows.

	\subsubsection{Hypergeometric sums}

	We now consider hypergeometric sums introduced by Katz (see \cite[Chapter 8]{Ka90}). Let $m,n$ be two non-negative integers, and suppse $\boldsymbol{\chi}=(\chi_i)_{1\leqslant i\leqslant m}$ and $\boldsymbol{\eta}=(\eta_j)_{1\leqslant j\leqslant n}$ are two tuples of characters in $\fG$, and $\psi$ is the canonical additive character of $\bF_q$. Katz introduced the following hypergeometric sum
\begin{align*}
H(t,q;\boldsymbol\chi,\boldsymbol\eta)
:=\frac{(-1)^{m+n-1}}{q^{(m+n-1)/2}}\mathop{\sum\sum}_{\substack{\bx\in(\F_q^\times)^m,\by\in(\F_q^\times)^n\\ \fn(\bx)=t\fn(\by)}}\boldsymbol\chi(\bx)
\overline{\boldsymbol\eta(\by)}\psi(\ft(\bx)-\ft(\by))
\end{align*}
for $t\in\F_q^\times,$ where, for $\bx=(x_1,x_2,\cdots,x_m)\in(\F_q^\times)^m$, 
\begin{align*}
\boldsymbol\chi(\bx)=\prod_{1\leqslant i\leqslant m}\chi_i(x_i),
\end{align*}
\begin{align}\label{eq:T(x)N(x)}
\ft(\bx)=x_1+x_2+\cdots+x_m,\ \ \fn(\bx)=x_1x_2\cdots x_m,
\end{align}
and the notation with $\by$ can be defined in the same way. We say $\boldsymbol{\chi}$ and $\boldsymbol{\eta}$ are {\it disjoint} if $\chi_i\neq\eta_j$ for all $1\leqslant i\leqslant m$ and $1\leqslant j\leqslant n.$

In general, Katz \cite{Ka90} performed a very systematic study on geometric features of $H(t,q;\boldsymbol\chi,\boldsymbol\eta)$ and the underlying sheaf. We now summarize some of them, which turn out to be very crucial in our study on the double character sum \eqref{eq:charactersum}.

\begin{lemma}\label{lm:hypergeometric-weightranklisse}
With the above notation,
if $\boldsymbol\chi$ and $\boldsymbol\eta$ are disjoint, then for any $\ell\neq p,$ there exists a geometrically irreducible $\ell$-adic middle-extension sheaf $\cH(\boldsymbol\chi,\boldsymbol\eta)$ on $\A_{\F_q}^1$ with trace function given by $t\mapsto H(t,q;\boldsymbol\chi,\boldsymbol\eta),$ such that it is
\begin{itemize}
\item pointwise pure of weight zero and of rank $\max\{m,n\};$
\item lisse on $\mathbf{G}_{m,\F_q}$, if $m\neq n;$
\item lisse on $\mathbf{G}_{m,\F_q}-\{1\},$ if $m=n.$
\end{itemize}
\end{lemma}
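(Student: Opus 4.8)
The plan is to identify $\cH(\boldsymbol\chi,\boldsymbol\eta)$ with a Tate twist of one of Katz's hypergeometric sheaves \cite[Chapter 8]{Ka90} and then read off all the asserted properties from his structural analysis, reducing the lemma to a bookkeeping check on trace functions. First I would recall the construction: with $\psi$ the canonical additive character, one forms on $\mathbf{G}_{m,\F_q}$ the Kummer sheaves $\cL_{\chi_i},\cL_{\eta_j}$ and the Artin--Schreier sheaf $\cL_\psi$, and defines $\mathcal{H}yp:=\mathcal{H}yp_!(\psi;\boldsymbol\chi;\boldsymbol\eta)$ as a normalized iterated $!$-multiplicative convolution on $\mathbf{G}_m$ of the $m$ ``numerator'' inputs $\cL_{\chi_i}\otimes\cL_\psi$ against the $n$ ``denominator'' inputs obtained from $\cL_{\eta_j}\otimes\cL_\psi$ by pull-back along $x\mapsto 1/x$, followed by middle extension across the punctures. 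Under the disjointness hypothesis $\chi_i\neq\eta_j$, Katz \cite[\S\S 8.2--8.4]{Ka90} shows that $\mathcal{H}yp$ is a geometrically irreducible middle-extension sheaf on $\A^1_{\F_q}$ of rank $\max\{m,n\}$, lisse on $\mathbf{G}_m$ when $m\neq n$ and on $\mathbf{G}_m-\{1\}$ when $m=n$, and pure of weight $m+n-1$; the purity rests on Deligne's Weil~II (purity is preserved by $!$-convolution and by middle extension) together with Katz's explicit description of the local monodromy at $0,1,\infty$, which rules out any weight drop. I would then set $\cH(\boldsymbol\chi,\boldsymbol\eta):=\mathcal{H}yp\bigl(\tfrac{m+n-1}{2}\bigr)$, the half-integral Tate twist, which is pure of weight zero with the same rank and lisse locus.

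Second, I would match the trace functions. Since the trace function of a $!$-multiplicative convolution $\cF*_!\cG$ equals $t\mapsto\sum_{ab=t}K_\cF(a)K_\cG(b)$ on the dense open locus where only $H^1_c$ contributes (by Grothendieck--Lefschetz), unwinding the $m+n$ convolution factors, using $K_{\cL_{\chi_i}\otimes\cL_\psi}(x)=\chi_i(x)\psi(x)$ together with the analogous formula for the inverted denominator sheaves, yields, for $t\in\F_q^\times$ (and $t\neq 1$ when $m=n$),
$$ \tr(\bFrob_t\mid\mathcal{H}yp)=(-1)^{m+n-1}\mathop{\sum\sum}_{\substack{\bx\in(\F_q^\times)^m,\by\in(\F_q^\times)^n\\ \fn(\bx)=t\fn(\by)}}\boldsymbol\chi(\bx)\,\overline{\boldsymbol\eta(\by)}\,\psi(\ft(\bx)-\ft(\by)), $$
the sign $(-1)^{m+n-1}$ being the usual cohomological-degree shift; this is Katz's trace-function formula \cite[\S 8.2]{Ka90}. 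Dividing by $q^{(m+n-1)/2}$, that is, passing to the twist $\cH(\boldsymbol\chi,\boldsymbol\eta)$, turns the right-hand side into precisely $H(t,q;\boldsymbol\chi,\boldsymbol\eta)$, which matches the normalization used to define $H(t,q;\boldsymbol\chi,\boldsymbol\eta)$. This completes the identification and hence the lemma.

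The main obstacle I anticipate is expository rather than conceptual: one must carry the normalizing powers of $q$ and the sign $(-1)^{m+n-1}$ correctly through each convolution step, and be careful with the exceptional fibre $t=1$ in the balanced case $m=n$, where the stalk of the middle extension need not coincide with the naive convolution value (so one should either stay in the lisse locus or invoke Katz's local-monodromy description there). The deeper inputs, namely geometric irreducibility under disjointness, the rank and lisse-locus computation, and purity, are exactly the content of \cite[Chapter 8]{Ka90}, built on Deligne's work on the Riemann Hypothesis over finite fields, and I would invoke them directly rather than reprove them.
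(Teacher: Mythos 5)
Your proposal is correct and follows essentially the same route as the paper, which simply quotes this statement from Katz \cite[Theorem 8.4.2]{Ka90}; your extra material (the convolution construction, the Tate twist by $\tfrac{m+n-1}{2}$ to reach weight zero, and the sign/normalization bookkeeping in the trace function) is just an unwinding of that citation. Since you, like the paper, defer the deep inputs (irreducibility under disjointness, rank, lisse locus, purity) to Katz, there is nothing further to reconcile.
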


Lemma \ref{lm:hypergeometric-weightranklisse} is our starting point and can be found in \cite[Theorem 8.4.2]{Ka90}. In what follows, we need to consider a pullback of the hypergeometric sheaf $\cH(\boldsymbol\chi,\boldsymbol\eta)$, for which we need to determine its 
geometric monodromy group $G_{\text{geom}}$. To do so, we introduce the following definitions of exceptional tuples of characters (see \cite[Corollary 8.9.2, 8.10.1]{Ka90} or \cite[Definition 3.4]{FKM15a}).

\begin{definition}
Let $\boldsymbol\chi,\boldsymbol\eta$ be an $m$-tuple and an $n$-tuple of characters of $\F_q^\times$.
\begin{itemize}
\item For $d\geqslant 1$, the pair $(\boldsymbol\chi,\boldsymbol\eta)$ is $d$-Kummer-induced if $d \mid(m,n)$ and if there exist $m/d$ and $n/d-$ tuples $\boldsymbol\chi^*$ and $\boldsymbol\eta^*$ such that $\boldsymbol\chi$ consists of all characters $\chi$ such that $\chi^d$ is a component of $\boldsymbol\chi^*$, and $\boldsymbol\eta$ consists of all characters $\eta$ such that $\eta^d$ is a component of $\boldsymbol\eta^*$.

\item Assume $m=n$. For positive integers $a,b$ such that $a+b=n,$ the pair $(\boldsymbol\chi,\boldsymbol\eta)$ is $(a,b)$-Belyi-induced if there exist characters $\alpha$ and $\beta$ with $\beta \neq 1$ such that $\boldsymbol\chi$ consists of all characters $\chi$ such that either $\chi^a=\alpha$ or $\chi^b=\beta$, and if $\boldsymbol\eta$ consists of all characters $\eta$ such that $\eta^m=\alpha \beta$.

\item Assume $m=n$. For positive integers $a, b$ such that $a+b=n,$ the pair $(\boldsymbol\chi,\boldsymbol\eta)$ is $(a,b)$-inverse-Belyi-induced if and only if $(\overline{\boldsymbol\eta},\overline{\boldsymbol\chi})$ is $(a,b)$-Belyi-induced.

\item We say that $(\boldsymbol\chi,\boldsymbol\eta)$ is Kummer-induced $($resp. Belyi-induced, inverse-Belyi-induced$)$ if there exists some $d\geqslant2$ $($resp. some $a,b\geqslant1)$ such that the pair is $d$-Kummer-induced $($resp. $(a,b)$-Belyi-induced, $(a,b)$-inverse-Belyi-induced$)$.
\end{itemize}
\end{definition}

The following lemma is borrowed directly from Katz \cite[Theorem 8.11.2]{Ka90}, giving an initial description on the 
geometric monodromy group $G_{\text{geom}}$ of $\cH(\boldsymbol\chi,\boldsymbol\eta)$.

\begin{lemma}\label{lm:hypergeometric-geometricmonodromygroup}
Suppose $m=n<p$ and write
\begin{align*}
\Lambda=\prod_{1\leqslant i\leqslant n} \chi_i \overline{\eta}_i .
\end{align*}
Assume that $(\boldsymbol\chi,\boldsymbol\eta)$ is neither Kummer-induced, Belyi-induced, nor inverse-Belyi-induced. Denote by $G^0$ the connected component of the identity in the 
geometric monodromy group $G_{\text{geom}}$ of $\cH(\boldsymbol\chi,\boldsymbol\eta)$.

Then $G^0$ is either trivial, $\SL_n, \SO_n$ or $\Sp_n.$ More precisely,
\begin{itemize}
\item If $\Lambda=1$, then $G^0$ is either $\SL_n$ or $\Sp_n;$
\item If $\Lambda \neq 1$ but $\Lambda^2=1$, then $G^0$ is either trivial or $\SO_n$ or $\SL_n;$
\item If $\Lambda^2 \neq 1$, then $G^0$ is either trivial or $\SL_n.$
\end{itemize}
\end{lemma}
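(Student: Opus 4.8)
The plan is to deduce this from Katz's structural theory of hypergeometric sheaves; indeed the statement is essentially \cite[Theorem~8.11.2]{Ka90}, and what follows is the shape of the argument I would reproduce. By Lemma~\ref{lm:hypergeometric-weightranklisse} the sheaf $\cH=\cH(\boldsymbol\chi,\boldsymbol\eta)$ is geometrically irreducible, pure of weight zero, of rank $n$, and lisse on $\mathbf{G}_{m,\F_q}-\{1\}$; by Deligne's purity theorem its geometric monodromy group $G_{\mathrm{geom}}\subseteq\GL(V_{\cH})\cong\GL_n(\overline{\Q}_\ell)$ acts irreducibly and its identity component $G^0$ is semisimple. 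First I would carry out the local monodromy analysis at the three singularities $0,1,\infty$: because $\boldsymbol\chi$ and $\boldsymbol\eta$ are disjoint, the local monodromy of $\cH$ at $t=1$ is tame and is a pseudoreflection (the identity plus a rank-one endomorphism), so $\cH$ has ``drop'' exactly $1$ at $t=1$. This places $\cH$ in the setting of Katz's classification of geometrically irreducible middle-extension sheaves on $\mathbf{G}_m$ whose geometric monodromy is constrained by such a pseudoreflection.

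The core step is then to invoke the classification of irreducible subgroups of $\GL_n$ that occur in this way — the reflection-group classification together with a theorem of Gabber on prime-to-$p$ monodromy, as assembled in \cite[Chapter~8]{Ka90}. Up to its centre, such a group is $\SL_n$, $\Sp_n$ (for $n$ even), $\SO_n$, a trivial or finite group, an imprimitive (monomial-type) group, or one of a short explicit list of exceptional primitive possibilities, with $G_2$ in its $7$-dimensional representation as the prototype. The hypothesis $n<p$ removes the small-characteristic coincidences, and I would then check that every imprimitive and every exceptional-primitive case forces $(\boldsymbol\chi,\boldsymbol\eta)$ to be Kummer-, Belyi-, or inverse-Belyi-induced: Kummer-induction is precisely the statement that $\cH$ is a multiplicative pullback $[d]^{*}$ of a lower-dimensional hypergeometric, hence imprimitivity, and the Belyi and inverse-Belyi conditions are exactly Katz's bookkeeping for the cases where $\cH$ (respectively its dual) is a multiplicative push-forward along a Belyi map, which are exactly those producing the exceptional monodromy groups. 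Under the hypotheses of the lemma all of these are excluded, leaving $G^0\in\{1,\SL_n,\SO_n,\Sp_n\}$.

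It remains to separate the regimes for $\Lambda=\prod_i\chi_i\overline\eta_i$. The determinant of $\cH$ is, up to a geometrically constant sheaf, the Kummer sheaf $\cL_{\Lambda}$, so $\Lambda$ records $\det G_{\mathrm{geom}}$; and a geometric self-duality of $\cH$ — which is what permits $G^0$ to be $\SO_n$ or $\Sp_n$ — forces the combined tuple of characters to be stable under inversion and hence $\Lambda^2=1$, with Katz's sign criterion making the autoduality symplectic exactly when $\Lambda=1$ (the $\Sp_n$ case) and orthogonal when $\Lambda\neq1$ (the $\SO_n$ case). Tracking which members of $\{1,\SL_n,\SO_n,\Sp_n\}$ genuinely survive in each regime — in particular that the trivial/finite case can occur only when $\Lambda^2=1$ — is the remaining bookkeeping. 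I expect the substantive difficulty to be concentrated in the second paragraph: the classification of irreducible reflection monodromy groups and the precise matching of each excluded inducedness type to its exceptional monodromy group are where \cite[Chapter~8]{Ka90} does the real work, and faithfully reproducing that is the main obstacle; everything else is local-monodromy computation and determinant bookkeeping.
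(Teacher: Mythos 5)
The paper offers no proof of this lemma: the sentence immediately preceding it states that the lemma is ``borrowed directly from Katz [Theorem 8.11.2],'' and that citation is the entirety of the paper's argument. Your proposal correctly identifies the source, and your sketch (pseudoreflection local monodromy at $t=1$; the reflection-group classification of irreducible monodromy with Gabber's prime-to-$p$ input; excluding the imprimitive and exceptional cases via the Kummer-/Belyi-/inverse-Belyi non-induction hypotheses; determinant and autoduality bookkeeping for $\Lambda$) is a fair reconstruction of the architecture of Katz's Chapter~8. One small slip: Kummer-induction corresponds to $\cH$ being a multiplicative \emph{pushforward} $[d]_{*}$ of a rank-$n/d$ hypergeometric (pushforward multiplies rank by $d$), not a pullback $[d]^{*}$, which preserves rank; this is what makes the monodromy imprimitive.
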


\subsection{Proof of Proposition \ref{prop:charactersum}}
To begin with, we write
\begin{align}
S(\chi,\eta;\rho)
&=\eta(\omega^2)\sum_{\alpha\in\F_q} \rho(\alpha+\omega)\sum_{t\in\F_q-\{0,1\}} \chi(t)\eta((\alpha^2\omega^{-2}-1)(1-t)^{-1}+1)\nonumber\\
&=\eta(\omega^2)\sum_{\beta\in\F_q}A(\beta)B(1-\beta\omega^{-2}),\label{eq:S-A,B}
\end{align}
where
\begin{align*}
A(y)&= \sum_{\substack{\alpha\in\F_q\\ \alpha^2=y}} \rho(\alpha+\omega),
\end{align*}
and
\begin{align*}
B(y)&=\sum_{t\in\F_q-\{0,1\}} \chi(t)\eta(1-y(1-t)^{-1}). 
\end{align*}

A trivial bound for $A(y)$ shows
\begin{align}\label{eq:A(y)-trivialbound}
|A(y)|\leqslant 2.
\end{align}
The heart of our treatment to $S(\chi,\eta;\rho)$ lies in the following transformation of $B(y)$ in terms of hypergeometric sums.

\begin{lemma}\label{lm:B(y)}
Let $\chi,\eta$ be non-trivial characters of $\F_q^\times.$ For each $y\in\F_q^\times,$ we have
\begin{align*}
B(y)
&=\frac{-\tau(\chi)\tau(\eta)}{\sqrt{q}}H(y,q;\boldsymbol\chi,\boldsymbol\eta)
\end{align*}
with $\boldsymbol\chi=(\chi_0,\chi_0)$ and $\boldsymbol\eta=(\chi,\eta),$ where $\chi_0$ denotes the trivial character of $\F_q^{\times}$.
\end{lemma}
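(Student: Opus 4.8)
The plan is to prove the identity by a direct computation: I would expand the hypergeometric sum $H(y,q;\boldsymbol\chi,\boldsymbol\eta)$ from its defining formula, parametrise away the norm constraint, and then use Gauss-sum Fourier inversion of multiplicative characters to collapse the resulting four-fold sum onto a single-variable sum which visibly agrees with $B(y)$ up to an elementary constant.

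First I would put $B(y)$ in a symmetric shape: substituting $u=1-t$ and using the convention $\chi(0)=0$ (so that one may freely sum over all of $\F_q$) gives
$$ B(y)=\sum_{u\in\F_q}\chi(1-u)\,\eta(u-y)\,\overline{\eta}(u). $$
On the other side, for $\boldsymbol\chi=(\chi_0,\chi_0)$ and $\boldsymbol\eta=(\chi,\eta)$ the defining formula reads
$$ H(y,q;\boldsymbol\chi,\boldsymbol\eta)=\frac{-1}{q^{3/2}}\sum_{\substack{x_1,x_2,y_1,y_2\in\F_q^\times\\ x_1x_2=y\,y_1y_2}}\overline{\chi}(y_1)\,\overline{\eta}(y_2)\,\psi(x_1+x_2-y_1-y_2). $$
Parametrising the constraint by the bijection $x_1=y_1w$, $x_2=yy_2/w$ (with $y_1,y_2,w\in\F_q^\times$) turns the additive phase into $\psi\big(y_1(w-1)+y_2(y/w-1)\big)$, which decouples the $y_1$- and $y_2$-summations. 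Carrying them out with the elementary identity $\sum_{a\in\F_q^\times}\overline{\chi}(a)\psi(a\mu)=\chi(\mu)\tau(\overline{\chi})$ (valid for every $\mu\in\F_q$ since $\chi$ is non-trivial) leaves
$$ H(y,q;\boldsymbol\chi,\boldsymbol\eta)=\frac{-\tau(\overline{\chi})\tau(\overline{\eta})}{q^{3/2}}\sum_{w\in\F_q^\times}\chi(w-1)\,\eta(y-w)\,\overline{\eta}(w), $$
where I have used $\eta(y/w-1)=\eta(y-w)\overline{\eta}(w)$ and the fact that the sum may then be extended to $w\in\F_q$ because the $w=0$ term vanishes.

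Comparing the two displays, I would pull $\chi(-1)$ out of $\chi(w-1)=\chi(-1)\chi(1-w)$ and $\eta(-1)$ out of $\eta(y-w)=\eta(-1)\eta(w-y)$, which identifies the $w$-sum as $\chi(-1)\eta(-1)\,B(y)$. Therefore
$$ B(y)=\frac{-q^{3/2}}{\chi(-1)\eta(-1)\,\tau(\overline{\chi})\tau(\overline{\eta})}\,H(y,q;\boldsymbol\chi,\boldsymbol\eta), $$
and invoking the standard relations $\tau(\chi)\tau(\overline{\chi})=\chi(-1)q$, $\tau(\eta)\tau(\overline{\eta})=\eta(-1)q$, together with $\chi(-1)^2=\eta(-1)^2=1$, simplifies the prefactor to exactly $-\tau(\chi)\tau(\eta)/\sqrt q$, which is the assertion.

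The argument is entirely formal, so I do not expect a genuine obstacle; the only points requiring care are the consistent use of the convention $\chi(0)=0$ (which is what makes the reindexing and the Gauss-sum inversion clean) and the careful bookkeeping of the sign factors $\chi(-1),\eta(-1)$ and the powers of $q$. The substantive work of this section comes afterwards, in passing from this identity — via the geometric properties of the hypergeometric sheaf $\cH(\boldsymbol\chi,\boldsymbol\eta)$ in Lemma \ref{lm:hypergeometric-weightranklisse} and Katz's monodromy theorems — to the square-root cancellation asserted in Proposition \ref{prop:charactersum}.
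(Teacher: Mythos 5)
Your computation is correct: the reindexing $u=1-t$ of $B(y)$, the parametrisation $x_1=y_1w$, $x_2=yy_2/w$ of the norm constraint (which is indeed a bijection onto the fibre), the use of $\sum_{a\in\F_q^\times}\overline{\chi}(a)\psi(a\mu)=\chi(\mu)\tau(\overline{\chi})$ for all $\mu\in\F_q$ when $\chi$ is non-trivial, and the final bookkeeping via $\tau(\chi)\tau(\overline{\chi})=\chi(-1)q$, $\chi(-1)^2=\eta(-1)^2=1$ all check out and yield exactly the stated constant $-\tau(\chi)\tau(\eta)/\sqrt{q}$.

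Your route is, however, not the one taken in the paper, and the difference is worth noting. The paper starts from $B(y)$, detects the hyperbola condition $st=1$ by orthogonality of multiplicative characters, which expresses $B(y)$ as $\frac{1}{q-1}\sum_{\xi}\xi(y^{-1})J(\xi,\chi)J(\xi,\eta)$; it then converts the Jacobi sums into Gauss sums via Lemma \ref{lm:Jacobi-Gauss} and asserts that the resulting character average $\sum_{\xi}\xi(y^{-1})\tau(\xi)^2\overline{\tau(\xi\chi)\tau(\xi\eta)}$ equals $-q^{3/2}(q-1)H(y,q;(\chi_0,\chi_0),(\chi,\eta))$, leaving that last identity to the reader (``opening the Gauss sums and applying orthogonality again''). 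This is the Mellin-transform point of view on hypergeometric sums: the coefficients of $B(y)$ against multiplicative characters are products of Gauss sums matching the defining Mellin data of $\cH(\boldsymbol\chi,\boldsymbol\eta)$. You instead work forwards from Katz's definition of $H$, solve the norm constraint explicitly and collapse the two free Gauss-sum variables, arriving at $B(y)$ with no appeal to Jacobi sums or to orthogonality at all. Your argument is more self-contained and makes every constant explicit, at the cost of obscuring the Mellin structure that motivates viewing $B(y)$ as a hypergeometric trace function in the first place; the paper's route packages that structure but hides a computation equivalent to yours in its final ``can be verified'' step. Either proof is acceptable for the purposes of Section \ref{sec:charactersums}, since the subsequent sheaf-theoretic input only uses the identity itself.
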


\proof
Note that
\begin{align*}
B(y)&=\mathop{\sum\sum}_{\substack{s,t\in\F_q\\ st=1}} \chi(1-s)\eta(1-yt).
\end{align*}
From orthogonality of characters of $\F_q^\times$, we may write
\begin{align*}
B(y)
&=\frac{1}{q-1}\sum_{\xi\in \fG}\Big(\sum_{s\in\F_q} \xi(s)\chi(1-s)\Big)\Big(\sum_{t\in\F_q} \xi(t)\eta(1-yt)\Big)\\
&=\frac{1}{q-1}\sum_{\xi\in \fG}\xi(y^{-1}) J(\xi,\chi)J(\xi,\eta).
\end{align*}
We would like to evaluate the Jacobi sums in terms of Gauss sums. Note that $\chi,\eta$ are both non-trivial in $\fG.$ We are in a good position to apply Lemma \ref{lm:Jacobi-Gauss}, so that for the canonical additive character $\psi$ of $\F_q$,
\begin{align*}
B(y)
&=\frac{\tau(\chi)\tau(\eta)}{q^2(q-1)}\sum_{\xi\in \fG}\xi(y^{-1})\tau(\xi)^2\overline{\tau(\xi\chi)\tau(\xi\eta)}.
\end{align*}
To complete the proof of Lemma \ref{lm:B(y)}, it suffices to prove that
\begin{align*}
\sum_{\xi\in \fG}\xi(y^{-1})\tau(\xi)^2\overline{\tau(\xi\chi)\tau(\xi\eta)}&=-q^{3/2}(q-1)H(y,q;(\chi_0,\chi_0),(\chi,\eta)),
\end{align*}
which can be verified by opening the Gauss sums by definition, and applying orthogonality again.
\endproof

From \eqref{eq:S-A,B} and Lemma \ref{lm:B(y)} it follows that
\begin{align*}
S(\chi,\eta;\rho)
&=\frac{-\eta(\omega^2)\tau(\chi)\tau(\eta)}{\sqrt{q}}T(\chi,\eta;\rho),
\end{align*}
where 
\begin{align*}
T(\chi,\eta;\rho)
&=\sum_{\alpha\in\F_q} \rho(\alpha+\omega)H(1-\alpha^2\omega^{-2},q;(\chi_0,\chi_0),(\chi,\eta)).
\end{align*}

Now Proposition \ref{prop:charactersum} follows immediately from the following assertion.

\begin{lemma}\label{lm:T(chi,eta;rho)-upperbound}
\begin{align*}
|T(\chi,\eta;\rho)|\leqslant 1000\sqrt{q}.
\end{align*}
\end{lemma}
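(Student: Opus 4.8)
The plan is to interpret the sum $T(\chi,\eta;\rho)$ as a sum of a trace function against a multiplicative character composed with a quadratic pullback, and then to bound it by square-root cancellation coming from Deligne's Riemann Hypothesis over finite fields, in the form packaged by Fouvry--Kowalski--Michel. First I would observe that $t\mapsto H(t,q;(\chi_0,\chi_0),(\chi,\eta))$ is, by Lemma \ref{lm:hypergeometric-weightranklisse} (applicable since $\boldsymbol\chi=(\chi_0,\chi_0)$ and $\boldsymbol\eta=(\chi,\eta)$ are disjoint, as $\chi,\eta$ are non-trivial), the trace function of a geometrically irreducible middle-extension sheaf $\cH:=\cH((\chi_0,\chi_0),(\chi,\eta))$ on $\A^1_{\F_q}$, pointwise pure of weight zero, of rank $2$, lisse on $\gp{G}_{m,\F_q}-\{1\}$. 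Next I would pull back along the finite morphism $f:\alpha\mapsto 1-\alpha^2\omega^{-2}$ from $\A^1$ to $\A^1$; the pullback $f^*\cH$ is a middle-extension sheaf on the $\alpha$-line whose trace function at $\alpha$ is $H(1-\alpha^2\omega^{-2},q;\dots)$, with controlled singularities (the preimages of $0,1,\infty$, i.e. $\alpha=\pm\omega$, $\alpha=0$, $\alpha=\infty$) and bounded conductor. Then $T(\chi,\eta;\rho)=\sum_{\alpha\in\F_q}\rho(\alpha+\omega)\,(f^*\cH)(\alpha)$ is the sum over $\F_q$ of the trace function of the tensor product $\cL_\rho(\alpha+\omega)\otimes f^*\cH$, where $\cL_\rho$ denotes the (rank one, weight zero) Kummer-type sheaf attached to $\rho$; note $\rho$ is a character of $\F_{q^2}^\times$ whose restriction to the line through $\omega$ still gives a lisse rank-one weight-zero sheaf after the evident base change, and one must check $\cL_\rho(\alpha+\omega)$ makes sense over $\F_q$ since $\alpha+\omega\in\F_{q^2}$ — this is handled by viewing everything after the quadratic base change to $\F_{q^2}$ and using that $\omega^2\in\F_q$, so $f$ and the constant $\omega^2$ are defined over $\F_q$.

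The core estimate then reads: if the tensor sheaf $\cG:=\cL_\rho\otimes f^*\cH$ has no geometrically trivial subsheaf (equivalently, no trivial quotient), then by Deligne's bound (via the Grothendieck--Lefschetz trace formula and the vanishing of $H^0_c$ and the weight bound on $H^1_c$, combined with the Euler--Poincaré formula to bound $\dim H^1_c$ by the conductor) one gets $|T(\chi,\eta;\rho)|\le \fc(\cG)\cdot\sqrt{q}$, with $\fc(\cG)$ bounded by an absolute constant depending only on $\rank$ and the number and Swan conductors of the singularities — all of which are absolutely bounded here (rank $2$, finitely many tame singularities since hypergeometric sheaves with these parameters are tame, Swan conductors zero as $p\nmid$ the relevant data when $q$ is large; the finitely many small $q$ are absorbed in the constant $1000$). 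Tracking constants crudely gives the claimed $|T(\chi,\eta;\rho)|\le 1000\sqrt q$.

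The main obstacle, and the step requiring genuine input from Katz's book, is verifying that $\cG$ has no geometrically trivial constituent, i.e. that there is no ``diagonal'' cancellation forcing the sum to be of size $q$ rather than $\sqrt q$. Since $\cL_\rho$ is rank one, this amounts to showing that $f^*\cH$ contains no subsheaf geometrically isomorphic to $\cL_{\rho^{-1}}(\alpha+\omega)$; because $f^*\cH$ has rank $2$ and $\cH$ is geometrically irreducible, the dangerous case is when $f^*\cH$ becomes geometrically reducible, which by standard facts on pullbacks happens precisely when $\cH$ is induced from the quadratic covering $f$ — and this is governed by whether $(\boldsymbol\chi,\boldsymbol\eta)=((\chi_0,\chi_0),(\chi,\eta))$ is Kummer-, Belyi-, or inverse-Belyi-induced, or by the structure of the local monodromy of $\cH$ at the branch points $0,1,\infty$ relative to $f$. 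I would use Lemma \ref{lm:hypergeometric-geometricmonodromygroup}: here $\Lambda=\chi_0\cdot\chi_0\cdot\overline{\chi}\cdot\overline{\eta}=\overline{\chi\eta}$, so the monodromy group is $\SL_2$, $\SO_2$, or $\Sp_2=\SL_2$ (unless a degenerate induced case occurs), and one checks directly from the definitions that with $\chi_0$ repeated twice and $\chi\neq\eta$ non-trivial the pair is not Kummer-, Belyi-, nor inverse-Belyi-induced, so $G_{\mathrm{geom}}$ is (up to its component group) one of these, in particular acts irreducibly, and $f^*\cH$ stays irreducible unless $f$ exactly matches the local inertia structure — which I would rule out by comparing ramification at $\alpha=0$ (lying over $t=1$, a tame pseudoreflection point of $\cH$) and at $\alpha=\infty$ (over $t=\infty$) against the ramification forced by the quadratic cover. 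Handling the finitely many remaining exceptional or small-characteristic configurations by the trivial bound $|T|\le 2q\le 1000\sqrt q$ for $q$ bounded, completes the argument; and when $f^*\cH$ does stay irreducible of rank $2$, $\cG$ has no trivial constituent because $\cL_{\rho^{-1}}(\alpha+\omega)$ has rank one, giving the desired square-root saving.
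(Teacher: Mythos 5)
Your overall strategy — reinterpret $T(\chi,\eta;\rho)$ as a correlation of the trace functions of $f^*\cH$ (with $\cH=\cH((\chi_0,\chi_0),(\chi,\eta))$ pure, rank $2$, tame) and a rank-one sheaf realizing $\alpha\mapsto\rho(\alpha+\omega)$, then invoke Deligne with absolutely bounded conductors — is the same as the paper's. But the execution has two genuine gaps.

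The most serious one: when you read off the possibilities for $G^0$ from Lemma~\ref{lm:hypergeometric-geometricmonodromygroup} you list $\SL_2,\SO_2,\Sp_2$ and silently drop the case $G^0$ \emph{trivial}, which the lemma explicitly allows once $\Lambda\neq 1$ (and this is \emph{not} one of the ``degenerate induced cases''; Kummer/Belyi/inverse-Belyi concern the hypotheses, not the conclusion). Finite geometric monodromy is exactly the dangerous scenario for your argument: a finite group can act irreducibly in dimension $2$ while its index-two subgroup (the image of $\pi_1$ of the double cover $f$) acts reducibly — e.g.\ a dihedral or quaternion group restricted to a cyclic subgroup — so $f^*\cH$ could become geometrically reducible and your rank comparison against the rank-one $\rho$-sheaf would no longer give the needed non-isomorphism. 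The paper closes this hole by quoting Katz \cite[(8.17.3)]{Ka90}, which forbids finite monodromy precisely because a character ($\chi_0$) is repeated among the $\chi_i$. Once finite monodromy is excluded one has $G_{\text{geom}}=\SL_2$, and then irreducibility of $f^*\cH$ is immediate because $\SL_2$ has no proper closed finite-index subgroup; this is much cleaner and more conclusive than the local-inertia comparison at $\alpha=0,\infty$ you sketch but do not carry out.

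The second gap is the construction of the rank-one sheaf for $\alpha\mapsto\rho(\alpha+\omega)$ as a middle-extension sheaf over $\F_q$. You propose to ``view everything after the quadratic base change to $\F_{q^2}$,'' but this does not produce an $\F_q$-structure: after base change to $\F_{q^2}$ there is no $\bFrob_q$-action on the sheaf, so ``trace at $\F_q$-points'' is undefined, and an estimate genuinely over $\F_{q^2}$ would only give a bound of size $q$, not $\sqrt q$ (note also that in the application $\rho$ is regular, hence does not factor through the norm to $\F_q^\times$, so one cannot reduce to an $\F_q$-Kummer sheaf). What is needed is descent, not base change: the paper follows Katz and uses the Lang torsor on the Weil restriction $\E^\times=\mathrm{Res}_{E/F}\mathbf{G}_m$ (a commutative group scheme over $F=\F_q$), pushes out along $\rho$ to get a lisse rank-one $\overline{\Q}_\ell$-sheaf $\sL_\rho$, and pulls back along the $F$-morphism $\alpha\mapsto\alpha+\omega$ to the $\alpha$-line. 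This yields the sheaf $\cF$ with $\fc(\cF)=3$, and together with $\fc(f^*\cH)=6$ and Proposition~\ref{prop:RH} gives $3\cdot 6^2\cdot 3^2\sqrt q=972\sqrt q\leqslant 1000\sqrt q$. (A minor further slip: you assume $\chi\neq\eta$, which is not hypothesized and is not needed; the argument must cover $\chi=\eta$ as well.)
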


The proof of Lemma \ref{lm:T(chi,eta;rho)-upperbound} relies heavily on the ``quasi-orthogonality" of trace functions of $\ell$-adic sheaves due to Deligne \cite{De80}, as a consequence of his proof on Riemann Hypothesis for varieties over finite fields. The following version can be found for instance in \cite[Theorem 4.1]{FKM15b}, although the statement therein is only given for prime fields.

\begin{proposition}\label{prop:RH}
Suppose $\cF_1,\cF_2$ are two geometrically irreducible $\ell$-adic sheaves on $\mathbb{P}_{\F_q}^1$ which are both pointwise pure of weight zero, and $K_1,K_2$ are the associated trace functions, respectively. If $\cF_1$ is not geometrically isomorphic to $\cF_2,$ then
\begin{align*}
\Bigg|\sum_{x\in\F_q}K_1(x)\overline{K_2(x)}\Bigg|
\leqslant 3\fc(\cF_1)^2\fc(\cF_2)^2\sqrt{q},
\end{align*}
where $\fc(\cF_j)$ denotes the conductor of $\cF_j$ as defined by Definition $\ref{def:conductor}.$
\end{proposition}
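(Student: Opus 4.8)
\textbf{Proof strategy for Proposition \ref{prop:RH}.}

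The plan is to derive the quasi-orthogonality bound from Deligne's general results on weights in \'etale cohomology, following the standard argument of Fouvry--Kowalski--Michel \cite{FKM15b} but making explicit that the only modification needed for a general finite field $\F_q$ (rather than a prime field) is that the base point $\Spec \F_q$ carries a Frobenius whose eigenvalues on cohomology are powers of the geometric Frobenius of cardinality $q$; the Grothendieck--Lefschetz trace formula and Deligne's weight bounds are insensitive to whether $q$ is prime. First I would reduce to the tensor sheaf: set $\cF = \cF_1 \otimes \cF_2^{\vee}$, a middle-extension sheaf on $\mathbb{P}^1_{\F_q}$ whose trace function at $x$ in the common lisse locus is $K_1(x)\overline{K_2(x)}$ (using that $\cF_2$ is pure of weight zero, so its dual has trace function $\overline{K_2}$), and observe that $\cF$ is mixed of weights $\leqslant 0$. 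The sum $\sum_{x\in\F_q} K_1(x)\overline{K_2(x)}$ differs from $\sum_{x} \tr(\bFrob_x \mid \cF)$ only over the finitely many singular points of $\cF_1$ or $\cF_2$, and at each such point the local trace is bounded in absolute value by $\fc(\cF_1)\fc(\cF_2)$ by purity; this contributes at most $|S(\cF_1)\cup S(\cF_2)|\,\fc(\cF_1)\fc(\cF_2) \leqslant \fc(\cF_1)\fc(\cF_2)$ to the error, which is absorbed into the final constant.

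Next I would apply the Grothendieck--Lefschetz trace formula over $\A^1_{\F_q}$ (or over the lisse locus $U$, then correct at the removed points as above) to write $\sum_{x\in U(\F_q)} \tr(\bFrob_x\mid\cF) = \sum_{i=0}^{2} (-1)^i \tr(\bFrob_q \mid H^i_c(U_{\overline{\F}_q}, \cF))$. The degree-zero term $H^0_c$ vanishes because $U$ is affine (or: $\cF$ has no punctual sections). For the degree-two term $H^2_c$, coinvariants of the geometric monodromy, the key input is that $\cF_1$ and $\cF_2$ are \emph{geometrically irreducible} and \emph{not geometrically isomorphic}: by Schur's lemma the space of geometric morphisms $\cF_1\to\cF_2$ is zero, hence $H^0_{\mathrm{geom}}(\cF_1^{\vee}\otimes\cF_2) = 0$, and by Poincar\'e duality the geometric coinvariants of $\cF = \cF_1\otimes\cF_2^{\vee}$ also vanish, so $H^2_c(U_{\overline{\F}_q},\cF) = 0$. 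This leaves only $H^1_c$, and Deligne's main theorem (Weil II \cite{De80}) bounds the Frobenius eigenvalues on $H^1_c(U_{\overline{\F}_q},\cF)$ by $q^{1/2}$ since $\cF$ has weights $\leqslant 0$; thus $|\sum_{x\in U(\F_q)}\tr(\bFrob_x\mid\cF)| \leqslant \dim H^1_c(U_{\overline{\F}_q},\cF)\cdot \sqrt{q}$.

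The final step is the dimension count: by the Euler--Poincar\'e (Grothendieck--Ogg--Shafarevich) formula on $\mathbb{P}^1$, $\dim H^1_c(U_{\overline{\F}_q},\cF)$ is bounded in terms of $\rank(\cF) = \rank(\cF_1)\rank(\cF_2)$, the number of singularities, and the Swan conductors, and a routine estimate (as in \cite[\S 4]{FKM15b}) gives a bound of the form $\dim H^1_c \leqslant C\,\fc(\cF_1)^2\fc(\cF_2)^2$; combining with the error term from the singular points and choosing the numerical constant appropriately yields $3\fc(\cF_1)^2\fc(\cF_2)^2\sqrt{q}$. I expect the main obstacle to be purely bookkeeping: verifying that the conductor of the tensor-dual sheaf $\cF_1\otimes\cF_2^{\vee}$ is controlled by $\fc(\cF_1)\fc(\cF_2)$ (singularities add, ranks multiply, and Swan conductors satisfy $\Swan_x(\cF_1\otimes\cF_2^{\vee}) \leqslant \rank(\cF_2)\Swan_x(\cF_1) + \rank(\cF_1)\Swan_x(\cF_2)$), and then propagating these through the Euler characteristic formula to land on the clean exponent $2$ in $\fc(\cF_j)^2$ with an honest absolute constant; none of this is deep, but the constants must be tracked carefully. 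Since this proposition is quoted verbatim from \cite[Theorem 4.1]{FKM15b} with only the cosmetic generalization from $\F_p$ to $\F_q$, I would in fact simply cite that reference and remark that the proof goes through \emph{mutatis mutandis}, rather than reproducing the argument in full.
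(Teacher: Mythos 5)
Your proposal takes essentially the same route as the paper: the paper gives no argument for Proposition \ref{prop:RH} at all, simply quoting it from \cite[Theorem 4.1]{FKM15b} with the remark that the statement there is given only for prime fields, and your closing plan to cite that result and note that the Grothendieck--Lefschetz trace formula and Weil II are insensitive to whether $q$ is prime is exactly what the paper does; your sketch of the underlying argument (pass to $\cF_1\otimes\cF_2^{\vee}$, kill $H^0_c$ and use geometric irreducibility plus non-isomorphism to kill the coinvariants in $H^2_c$, apply Deligne's weight bound to $H^1_c$, and control $\dim H^1_c$ by Euler--Poincar\'e in terms of the conductors) is the standard proof behind that citation. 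The only slip is cosmetic: the bound on the singular-point contribution should be $|S(\cF_1)\cup S(\cF_2)|\,\rank(\cF_1)\rank(\cF_2)\leqslant(\fc(\cF_1)+\fc(\cF_2))\,\fc(\cF_1)\fc(\cF_2)$, not $\leqslant\fc(\cF_1)\fc(\cF_2)$ as written, which is still $O(\fc(\cF_1)^2\fc(\cF_2)^2)$ and harmlessly absorbed into the constant $3$.
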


\proof[Proof of Lemma $\ref{lm:T(chi,eta;rho)-upperbound}$]

Denote by $\cH$ the sheaf with trace function $t\mapsto H(t,q;(\chi_0,\chi_0),(\chi,\eta))$ and 
geometric monodromy group $G_{\text{geom}}$.
Denote by $G^0$ the connected component of the identity in $G_{\text{geom}}$.
It is not difficult to check that
$(\boldsymbol\chi,\boldsymbol\eta)$ with $\boldsymbol\chi=(\chi_0,\chi_0)$ and $\boldsymbol\eta=(\chi,\eta)$ is neither Kummer-induced, Belyi-induced, nor inverse-Belyi-induced. We now apply Lemma \ref{lm:hypergeometric-geometricmonodromygroup} with $\Lambda=\overline{\chi\eta}$
and $m=n=2$, and it follows that $G^0$ is either trivial or $\SL_2.$
Generally speaking, it is very intricate to give a criterion for $G^0$ to be trivial.
Analysis by Katz \cite[\S 8.14--8.17]{Ka90} can, however, usually provide sufficient evidences to exclude the case of trivial $G^0$. Suppose that in our situation $G^0$ is trivial, then $\cH$ has finite geometric monodromy group $G_{\text{geom}}$. However, according to \cite[(8.17.3)]{Ka90}, this cannot happen since our first two characters are the same, equal to the trivial character $\chi_0.$ After eliminating the possibility that $G^0$ is trivial, we find $G^0$ and $G_{\text{geom}}$ must be $\SL_2.$ Note that $\SL_2$ has no finite index algebraic subgroup, we then find the geometric monodromy group of the pullback sheaf $f^*\cH$ is also $\SL_2$, where $f:\F_q\rightarrow\F_q$ is defined by $f(\alpha)=1-\alpha^2\omega^{-2}.$ Moreover, $f^*\cH$ should be geometrically irreducible of rank two.
Note that $0,1,\infty$ present all singularities of $\cH$ in $\mathbb{P}^1(\overline{\F}_q)$, which produce
four singularities of $f^*\cH$ at $0,\pm\omega$ and $\infty$. Since $\cH$ is tame everywhere (see \cite[Theorem 8.4.2]{Ka90}),
the conductor of $f^*\cH$ is $\fc(f^*\cH)=2+4+0=6$ according to Definition \ref{def:conductor}.

We now consider the function $\alpha\mapsto \rho(\alpha+\omega)$ following an argument of Katz \cite{Ka89}. For convenience, we write $F=\F_q$ and $E=\F_{q^2}.$ Given any finite-dimensional commutative $F$-algebra $A$, we denote by $\A$ the smooth affine scheme over $F$ given by ``$A$ as algebraic group over $F$", and denote by $\A^\times$ the open subscheme of $\A$ given by ``$A^\times$ as algebraic group over $F$" (should not be confused with ring of adeles used before). We apply such concepts to the cases $A=E$ and $A=F$. 
Because $\E^\times$ is a smooth, geometrically connected commutative group scheme over the finite field $F$, the Lang isogeny $1-\bFrob_F:\E^\times\rightarrow\E^\times$
makes $\E^\times$ into a $E^\times$-torsor over itself, the ``Lang torsor" $\sL$.
Note that $\rho$ can be viewed as a $\bar{\Q}_\ell$-valued character of $E^\times$,  by which it makes sense to push out the Lang torsor $\sL$ to obtain a lisse rank one $\bar{\Q}_\ell$-sheaf $\sL_\rho$ on $\E^\times$ which is pure of weight zero. We may also extend $\sL_\rho$ to $j_!\sL_\rho$ on $\E$ using the inclusion $j:\E^\times\rightarrow \E.$
For the function $v:\bF\rightarrow \E,$ $\alpha\mapsto\alpha+\omega$, the pullback sheaf $\cF:=v^*(j_!\sL_\rho)$ on $\bF$ is lisse of rank one and pure of weight zero on the open set $v^{-1}(\E^\times)$, and is zero outside. The sheaf $\cF$ is everywhere tamely ramified, because on $v^{-1}(\E^\times)$ it is lisse of order dividing that of $\rho$, hence coprime to $p$. Since there are two singularities of $\cF$ in $\mathbb{P}^1(\overline{\F}_q)$, the conductor of $\cF$ satisfies $\fc(\cF)=1+2+0=3$ according to Definition \ref{def:conductor}.

By comparing the ranks and irreducibilities, we find $f^*\cH$ and $\cF$ are not geometrically isomorphic, Lemma \ref{lm:T(chi,eta;rho)-upperbound} then follows from Proposition \ref{prop:RH}.
\endproof

\section{Proof of Theorem \ref{Main}}
\label{sec:proofcompleted}
	
	We now start the proof of Theorem \ref{Main} appealing to Theorem \ref{thm:Motohashi}, thanks to the local-global feature of which, it suffices to deal with a purely local question following the argument in \cite{BFW21+}.
	
	\subsection{Choice of test functions}
	We choose our test function $\Psi = \otimes_v' \Psi_v$ according to the data of $\pi = \otimes_v' \pi_v$ in the following three cases: 
	\begin{itemize}
	\item real places;
	\item unramified places;
	\item remaining non-archimedean places.
	\end{itemize}
The details will be presented one by one.	
	
(I) {\it  Real places}: At a real place $v$, we have $\pi_v \simeq \pi(\norm^{T_v}, \norm^{-T_v}) \otimes \sgn^{\varepsilon_v}$ for some $T_v \geq 0$ and $\varepsilon_v \in \{ 0,1 \}$. As a main result of \cite{BFW21+}, our test function $\Psi_v$ can be chosen so that
\begin{align*}
\Mt_{3,v}(\Psi_v \mid \sgn^{\varepsilon}, i\tau) =\id_{\varepsilon = \varepsilon_v} \cdot \sqrt{\pi} \frac{\cosh(\pi \tau)}{2 \Delta_v}\Big(\sum_\pm\exp \Big( - \frac{(\tau\pm T_v)^2}{2\Delta_v^2} \pm \frac{\pi}{2} \tau\Big) \Big)^2,
\end{align*}
where $\Delta_v = (1+\norm[T_v])^{\epsilon}$ with $\epsilon > 0$. Moreover, $\Mt_3(\Psi_v \mid \sigma) = 0$ if $\sigma$ is not spherical. Writing 
$$\chi_v(t) \norm[t]^s = \norm[t]^{\frac{1}{2}+ix} \sgn^{\varepsilon'}(t)$$
with $\varepsilon' \in \{ 0,1 \}$, the corresponding dual weight $\Mt_4(\Psi_v \mid \chi_v,s)$ is expressed in terms of an explicit integral transform in terms of some hypergeometric functions. In fact, tight bounds for the dual weights would suffice, and the exact formula are not necessary.
According to \cite[Theorem 1.9]{BFW21+}, for $\Re s = \frac{1}{2}$, we have
\begin{align*}
\Mt_4(\Psi_v \mid \chi_v,s) = \Mt_4(\Psi_v \mid \sgn^{\varepsilon'}, \tfrac{1}{2}+ix) \ll_A (1+\norm[x])^{-A} 
\end{align*}
for $|x|\geq (1+|T_v|) \log^2 (1+|T_v|)$ with any $A>0$, and 
\begin{align*}
\Mt_4(\Psi_v \mid \chi_v,s)\ll 1
\end{align*}
uniformly in $x\in\R.$

(II) {\it Unramified places}: At an unramifield place $\vp$ of $\pi$, we choose $\Psi_{\vp} = \id_{\Mat_2(\vo_{\vp})}$, so that it produces the relevant local zeta functions on both sides. Here we have slightly abused the terminology of ``unramified place'' to include those $\vp$ at which $\cond(\pi_{\vp})=0$ but $\vp \mid \Dif_{\bF}$. Such a local component $\Mt_{3,\vp}(\cdots)$ or $\Mt_{4,\vp}(\cdots)$ is not equal to $1$, but some power of $\Dis_{\vp} := \Nr(\Dif_{\vp})$, where $\Dif_{\vp}$ is the $\vp$-component of $\Dif_{\bF}$. The discrepancy enters into the implicit dependence on $\bF$ in Theorem \ref{Main}.

(III) {\it Remaining non-archimedean places}:
At the remaining non-archimedean places $\vp$, the test function $\Psi_{\vp}$ has been constructed explicitly in \S \ref{sec:Case1}-\ref{sec:Case3}, according to the conductor exponent $\fa(\pi)$ of $\pi$ classified in the beginning of Section \ref{sec:localcomputations}.
In each case, the weight function $\Mt_3(\Psi_{\vp} \mid \cdot)$ is non-negative on the spectrum of $\PGL_2(\bF_{\vp})$, and non-vanishing at $\pi_{\vp}$.

The following proposition summarizes Lemma \ref{LocMainBd1}, \ref{LocMainBd2-1}, \ref{LocMainBd2-2}, Corollary \ref{LocMainBd3} and \cite[Lemma 4.1 \& Corollary 4.8]{BFW21+}:

\begin{proposition}\label{prop:M4-M3-locally}
Let $\fa(\pi)=n$ with $n\in\{1,2,3,4\}.$ 
\begin{itemize}
\item For $n=1,$ $\Mt_4(\Psi_{\vp} \mid \chi_{\vp}, \frac{1}{2})$ vanishes unless $\fa(\chi_{\vp})=0,$ in which case we have
\begin{align*}
\Mt_4(\Psi_{\vp} \mid \chi_{\vp}, \tfrac{1}{2})
&\ll
\Mt_3(\Psi_{\vp} \mid \pi_{\vp})\cdot \Nr(\vp)^{\frac{1}{2}}.
\end{align*}

\item For $n=2,$ $\Mt_4(\Psi_{\vp} \mid \chi_{\vp}, \frac{1}{2})$ vanishes unless $\fa(\chi_{\vp})\leqslant1,$ in which case we have
\begin{align*}
\Mt_4(\Psi_{\vp} \mid \chi_{\vp}, \tfrac{1}{2})
&\ll
\Mt_3(\Psi_{\vp} \mid \pi_{\vp}).
\end{align*}

\item For $n=3,$ $\Mt_4(\Psi_{\vp} \mid \chi_{\vp}, \frac{1}{2})$ vanishes unless $\fa(\chi_{\vp})\leqslant1,$ in which case we have
\begin{align*}
\Mt_4(\Psi_{\vp} \mid \chi_{\vp}, \tfrac{1}{2})
&\ll
\Mt_3(\Psi_{\vp} \mid \pi_{\vp})\cdot \Nr(\vp)^{\frac{1}{2}}.
\end{align*}

\item For $n=4,$ $\Mt_4(\Psi_{\vp} \mid \chi_{\vp}, \frac{1}{2})$ vanishes unless $\fa(\chi_{\vp})\leqslant 2,$ in which case we have
\begin{align*}
\Mt_4(\Psi_{\vp} \mid \chi_{\vp}, \tfrac{1}{2})
&\ll
\Mt_3(\Psi_{\vp} \mid \pi_{\vp}).
\end{align*}

\end{itemize}
\end{proposition}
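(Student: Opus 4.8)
The plan is to verify the four bullets one at a time, in each case reading off a lower bound for the cubic moment weight $\Mt_3(\Psi_{\vp}\mid\pi_{\vp})$ and an upper bound for the dual weight $\Mt_4(\Psi_{\vp}\mid\chi_{\vp},\tfrac{1}{2})$ from the computations already completed in this section and in \cite{BFW21+}, and then dividing. In every case the test function $\Psi_{\vp}$ is the one fixed in Subsections \ref{sec:Case1}--\ref{sec:Case3} (or in \cite{BFW21+} when $\pi_0\simeq\pi(\xi,\xi^{-1})$), so the vanishing statement for $\Mt_4(\Psi_{\vp}\mid\chi_{\vp},\tfrac{1}{2})$ is immediate from the relevant lemma and only the quantitative comparison remains.

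For $n=1$ I would combine $\Mt_3(C\mid\mathrm{St}_{\xi})\asymp 1$ from Lemma \ref{LocMainBd1}(1) with the fact, from Lemma \ref{LocMainBd1}(2), that $\Mt_4(C\mid\chi,\tfrac{1}{2})$ vanishes unless $\fa(\chi_{\vp})=0$ and then equals $\asymp q^{1/2}=\Nr(\vp)^{1/2}$; this is exactly the first bullet. For $n=2$ there are two families. If $\pi_0$ is non-supercuspidal it is a twist $\mathrm{St}_{\xi}$ or $\pi(\xi,\xi^{-1})$ with $\fa(\xi)=1$, and Lemma \ref{LocMainBd2-1} (with the parameter there equal to $1$) together with \cite[Lemma 4.1 \& Corollary 4.8]{BFW21+} gives $\Mt_3(\Psi_{\vp}\mid\pi_{\vp})\gg q^{-1}$ and $\Mt_4(\Psi_{\vp}\mid\chi_{\vp},\tfrac{1}{2})\ll q^{-1}$, vanishing unless $\fa(\chi_{\vp})\le 1$; if $\pi_0\simeq\pi(\theta)$ is the depth-zero supercuspidal, then as computed right after \eqref{eq:Whittakerfunctional} one has $\Mt_3(\overline{C_{\theta}}\mid\pi(\theta))=1$, while Lemma \ref{LocMainBd2-2} gives $\Mt_4(\overline{C_{\theta}}\mid\chi,\tfrac{1}{2})\ll 1$, vanishing unless $\fa(\chi_{\vp})\le 1$. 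In both sub-cases $\Mt_4\ll\Mt_3$, which is the second bullet; the only arithmetically substantial input is Proposition \ref{prop:charactersum-1st}, proved in Section \ref{sec:charactersums}, which underlies the bound for $\Mt_4^{\emptyset,1}$ in the depth-zero case.

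For $n=3$ I would use $\Mt_3(\overline{C_t}\mid\sigma_t^{\zeta})=(q-1)^{-1}$ from the corollary following Lemma \ref{lem:WhitFormula}, together with Corollary \ref{LocMainBd3}, which shows $\Mt_4(\overline{C_t}\mid\chi,\tfrac{1}{2})$ vanishes unless $\fa(\chi_{\vp})\le 1$ and is $\ll q^{-1/2}$ there; since $q^{-1/2}\asymp (q-1)^{-1}q^{1/2}$, this is the third bullet with $\Nr(\vp)^{1/2}$. Finally, for $n=4$ the only representation not excluded by the main theorems is $\pi(\xi,\xi^{-1})$ with $\fa(\xi)=2$, which is precisely the family of \cite{BFW21+}: \cite[Lemma 4.1 \& Corollary 4.8]{BFW21+} (equivalently Lemma \ref{LocMainBd2-1} with parameter $2$) gives $\Mt_3(\Psi_{\vp}\mid\pi_{\vp})\gg q^{-2}$ and $\Mt_4(\Psi_{\vp}\mid\chi_{\vp},\tfrac{1}{2})\ll q^{-2}$, vanishing unless $\fa(\chi_{\vp})\le 2$, hence the fourth bullet.

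I do not expect a genuine obstacle, since this proposition is a bookkeeping statement assembling bounds whose proofs are already complete, the only deep ingredient being the character-sum estimate of Proposition \ref{prop:charactersum-1st} feeding the $n=2$ depth-zero case. The one point to be careful about is that in each case the exponent of $\Nr(\vp)$ relating $\Mt_4$ and $\Mt_3$ comes out exactly as stated; this is where a stray volume factor $\Vol(\gp{K}_0[\vp])$, $\Vol(\gp{K}_1[\vp])$ or $\Vol(1+\vp)$ could slip in, so I would double-check that the normalizations used in the cited lemmas are mutually consistent.
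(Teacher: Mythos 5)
Your proposal is correct and follows exactly the paper's route: the proposition is stated there as a pure summary of Lemma \ref{LocMainBd1}, Lemma \ref{LocMainBd2-1}, Lemma \ref{LocMainBd2-2}, Corollary \ref{LocMainBd3} and \cite[Lemma 4.1 \& Corollary 4.8]{BFW21+}, and your case-by-case comparison of the lower bounds for $\Mt_3(\Psi_{\vp}\mid\pi_{\vp})$ ($\asymp 1$, $\gg q^{-1}$ or $=1$, $=(q-1)^{-1}$, $\gg q^{-2}$) with the upper bounds and vanishing conditions for $\Mt_4(\Psi_{\vp}\mid\chi_{\vp},\tfrac12)$ reproduces the stated exponents of $\Nr(\vp)$ in each bullet. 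Your cautionary remark about normalizations is moot here, since both weights are formed with the same test function $\Psi_{\vp}$, so any overall normalization cancels in the comparison.
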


\subsection{Bounding the cubic and fourth moments}
With the above choices and estimations, we deduce, upon an obvious re-normalization of $\Psi$ and an application of \cite[Theorem 1.11]{BFW21+} (large sieve inequality over number fields)
\begin{align}\label{eq:MainTermBd-L-M3}
L(\tfrac{1}{2},\pi)^3 \ll_{\epsilon} \Mt_3(\Psi)\Cond(\pi)^{\epsilon}
\end{align}
and
\begin{align}\label{eq:MainTermBd-M4-conductor}
\Mt_4(\Psi) \ll_{\epsilon} \Cond(\pi)^{\frac{1}{2}+\epsilon}.
\end{align}
In view of Motohashi's formula (Theorem \ref{thm:Motohashi}), it remains to bound the degenerate terms. Now Theorem \ref{MainBis} (hence Theorem \ref{Main}) follows from \eqref{eq:MainTermBd-L-M3}, \eqref{eq:MainTermBd-M4-conductor} and the claims
\begin{align}\label{eq:DegenerateTermBd-D3}
\Dt_3(\Psi) \ll_{\epsilon} \Cond(\pi)^{\epsilon}
\end{align}
and
\begin{align}\label{eq:DegenerateTermBd-D4}
\Dt_4(\Psi) \ll_{\epsilon} \Cond_1(\pi)^{1/2}\Cond_3(\pi)^{1/6} \Cond(\pi)^{1/2+\epsilon}.
\end{align}

\subsection{Estimates for degenerate terms}
We first consider $\Dt_3(\Psi)$. Note that if $\pi_{\vp}$ is supercuspidal for some finite place $\vp$, then $\Mt_3(\Psi \mid \id,s)$ vanishes for any $s$, since its local component at such $\vp$ vanishes identically, from which it follows that $\Dt_3(\Psi)=0$. We now assume $\pi_{\vp}$ is not supercuspidal at every finite place $\vp$ (thus $\fa(\pi_{\vp})\leqslant2$) and suppose 
the local test function $\Psi_{\vp}$ constructed in the next section satisfy our requirements.  If the local conductor exponent $\fa(\pi_{\vp})=2$ for some finite place $\vp$, then by Lemma \ref{LocMainBd2-1} (1) we have $\Mt_3(\Psi \mid \id, s) = 0$ since $\fa(\pi(\norm_{\vp}^s, \norm_{\vp}^{-s}) \otimes \xi) = 2 > 1$ for any quadratic character $\xi$ of $\bF_{\vp}^{\times}$ with conductor exponent $1$. Thus $\Dt_3(\Psi)$ vanishes unless $\fa(\pi_{\vp}) \leq 1$  at every finite place $\vp$, in which case it follows from Lemma \ref{LocAuxBd1}
that
\begin{align*}
\Mt_3(\Psi \mid \id, s)
&= \prod_{\substack{v \mid \infty\\\varepsilon_v = 1}}\sqrt{\pi} \frac{\cos(\pi s)}{2 \Delta_v} \Bigg\{\sum_\pm\exp \Big( - \frac{(is\pm T_v)^2}{2\Delta_v^2} \mp \frac{\pi}{2} is \Big) \Bigg\}^2 \\
&\quad \cdot \prod_{\substack{\vp\\ \fa(\pi_{\vp})=1}}\frac{2\zeta_{\vp}(1+2s) \zeta_{\vp}(1-2s)}{\zeta_{\vp}( \tfrac{1}{2}+s)^2 \zeta_{\vp}( \tfrac{1}{2}-s)^2} \cdot \frac{\zeta_{\bF}(\tfrac{1}{2}+s)^3 \zeta_{\bF}( \tfrac{1}{2}-s)^3}{\zeta_{\bF}(1+2s) \zeta_{\bF}(1-2s)}.
\end{align*}
The order of vanishing of the function $s \mapsto \Mt_3(\Psi \mid \id, s)$ at $s=\tfrac{1}{2}$ is 
\begin{align*}
\ord_{s=\frac{1}{2}}\Mt_3(\Psi \mid \id, s)
&=r+|\{ \vp < \infty:\fa(\pi_{\vp})=1\}| + (-3) + 2(r-1) \\
&= 3r-5+|\{ \vp < \infty:\fa(\pi_{\vp})=1\}|,
\end{align*}
where $r := [\bF:\Q]$ is the degree of $\bF$. This order is $\leq -1$ only if $r=1$ and $|\{ \vp < \infty:\fa(\pi_{\vp})=1\}|\leq 1$. So $\Dt_3(\Psi) = 0$ unless $\bF=\Q$ and the number of ramified places of $\pi$ is at most one. In this case, we also easily deduce that 
\begin{align*} 
\Dt_3(\Psi) = \frac{1}{\zeta_{\bF}^*} \Res_{s=\frac{1}{2}} \Mt_3(\Psi \mid \id,s) \ll_{A, \epsilon}\Cond(\pi_{\infty})^{-A} \Cond(\pi_{\fin})^{\epsilon},
\end{align*}
for any $A \geqslant 1$ and $\epsilon > 0$. In particular, \eqref{eq:DegenerateTermBd-D3} holds.
	
We finally consider $\Dt_4(\Psi)$. According to (\ref{Wt4thM}) and (\ref{4thMD}) we write
$$ \Mt_4(\Psi \mid \id,s) = \zeta_{\bF}( \tfrac{1}{2}+s)^2 \zeta_{\bF}( \tfrac{1}{2}-s)^2 \prod_v \Wt_{4,v}(\Psi_v \mid \id,s). $$
It suffices to estimate the local components $\Wt_{4,v}(\Psi_v \mid \id,s)$. To this end, denote by 
$w_{n,v}(a)$ the $n$-th coefficient in the Laurent expansion of $\Wt_{4,v}(\Psi_v \mid \id,s)$ at a point $s=a$.

\begin{enumerate}
\item At a real place $v \mid \infty$, it follows from \cite[Lemma 5.2 \& Corollary 5.3]{BFW21+} that $\Wt_{4,v}(\Psi_v \mid \id,s) = \Mt_{4,v}(\Psi_v \mid \id,s)$ is regular at $s=0$, and has a double pole at $s=1$. For any integer $n \geq -2$, we have
	$$w_{n,v}(1),~w_{n,v}(0)\ll_{\epsilon,n} \Cond(\pi_v)^{\tfrac{1}{2}+\epsilon}. $$
	
\item At a finite place $\vp$ such that $\cond(\pi_{\vp})=1$, Lemma \ref{LocAuxBd1} yields
	$$ \Wt_{4,\vp}(\Psi_{\vp} \mid \id,\tfrac{1}{2}+s) = (q+1)q^{s-\frac{1}{2}}(1-q^{-\tfrac{1}{2}-s})^2, $$
	so that for each integer $n \geq 0,$
	$$ w_{n,\vp}(1)\ll_{\epsilon,n} \Cond(\pi_{\vp})^{1+\epsilon}, \quad w_{n,\vp}(0) \ll_{\epsilon,n} \Cond(\pi_{\vp})^{\epsilon}. $$
	
\item At a finite place $\vp$ such that $\cond(\pi_{\vp})=2$ and $\pi_{\vp}$ is not supercuspidal, we deduce from Lemma \ref{LocAuxBd2-1} (with a renormalization to make $\Mt_{3,\vp}(\Psi_{\vp} \mid \pi_{\vp}) \asymp 1$) that
\begin{align*} 
	\Wt_{4,\vp}(\Psi_{\vp} \mid \id,\tfrac{1}{2}+s) 
	&= \zeta_{\vp}(1)(q+1)q^{-2} ( 1-q^{-(\frac{1}{2} + s)})^2 ( 1-q^{-(\frac{1}{2} - s)})^2\\
	&\ \ \ \ \times\sum_{\pm} \Bigg\{\frac{q^2\cdot q^{-(1\pm 2s)}}{( 1-q^{-(\frac{1}{2} \pm s)})^2}
	+\frac{2 \zeta_{\vp}(1) q\cdot q^{-(1\pm 2s)}}{1-q^{-(\frac{1}{2}\pm s)}}+\xi(\pm1)\Bigg\}, 
\end{align*}
which gives
	$$  w_{n,\vp}(1),~ w_{n,\vp}(0)\ll_{\epsilon,n} \Cond(\pi_{\vp})^{\tfrac{1}{2}+\epsilon} $$
	 for each integer $n \geq 0.$	
	 
\item At a finite place $\vp$ such that $\cond(\pi_{\vp})=2$ and $\pi_{\vp}$ is supercuspidal, we apply Lemma \ref{LocAuxBd2-2} to get
\begin{align*} 
	\Wt_{4,\vp}(\Psi_{\vp} \mid \id,\tfrac{1}{2}+s) 
	&= ( 1-q^{-(\frac{1}{2} + s)})^2 ( 1-q^{-(\frac{1}{2} - s)})^2\Bigg\{ -\frac{2q^{s-\frac{1}{2}}}{1-q^{s-\frac{1}{2}}}+ (q-1) \Big( \frac{q^{s-\frac{1}{2}}}{1-q^{s-\frac{1}{2}}} \Big)^2\\
	&\ \ \ \  - \overline{\theta^{\flat}(\sqrt{\varepsilon})} (q-1) \Big( \frac{q^{-s-\frac{1}{2}}}{1-q^{-s-\frac{1}{2}}} \Big)^2 +\frac{2}{q-1} \Bigg\}, 
\end{align*}
from which we find
	$$ w_{n,\vp}(1),~ w_{n,\vp}(0)\ll_{\epsilon,n} \Cond(\pi_{\vp})^{\tfrac{1}{2}+\epsilon}. $$
	 for each integer $n \geq 0.$
	
\item At a finite place $\vp$ such that $\cond(\pi_{\vp})=3$ and $\pi_{\vp}$ is supercuspidal, we deduce from Lemma \ref{LocAuxBd3} (with a renormalization to make $\Mt_{3,\vp}(\Psi_{\vp} \mid \pi_{\vp}) \asymp 1$) that
	$$ \Wt_{4,\vp}(\Psi_{\vp} \mid \id,\tfrac{1}{2}+s) =q^{s+\frac{1}{2}} (q+1) ( 1-q^{-(\frac{1}{2} + s)})^2 ( 1-q^{-(\frac{1}{2} - s)})^2\Big( -\frac{1}{q-1} + \frac{q^{s-\frac{1}{2}}}{1-q^{s-\frac{1}{2}}} \Big)^2, $$
	which gives
	$$ w_{n,\vp}(1)\ll_{\epsilon,n} \Cond(\pi_{\vp})^{\tfrac{2}{3}+\epsilon}, \quad  w_{n,\vp}(0)\ll_{\epsilon,n}\Cond(\pi_{\vp})^{-\tfrac{1}{3}+\epsilon}$$
		 for each integer $n \geq 0.$
	
\item At a finite place $\vp$ such that $\cond(\pi_{\vp})=4$ and $\pi_{\vp}$ is not supercuspidal, we import the relevant result leading to \cite[(5.1)]{BFW21+}, getting
\begin{align*} 
	\Wt_{4,\vp}(\Psi_{\vp} \mid \id,\tfrac{1}{2}+s)
	&= \zeta_{\vp}(1) ( 1-q^{-(\frac{1}{2} + s)})^2 ( 1-q^{-(\frac{1}{2} - s)})^2 \\
	&\ \ \ \times\sum_{\pm} \Bigg\{\frac{q^2\cdot q^{-2(1\pm 2s)}}{( 1-q^{-(\frac{1}{2} \pm s)})^2}
	+\frac{2 \zeta_{\vp}(1) q\cdot q^{-2(1\pm 2s)}}{1-q^{-(\frac{1}{2}\pm s)}}+q^{-(1\pm 2s)}\Bigg\}, 
\end{align*}
so that for each integer $n \geq 0,$
	$$ w_{n,\vp}(1),~ w_{n,\vp}(0) \ll_{\epsilon,n} \Cond(\pi_{\vp})^{\tfrac{1}{2}+\epsilon}. $$
\end{enumerate}

The above bounds for $w_{n,v}(1),w_{n,v}(0)$ would be used to evaluate the residues 
	\begin{align}
\Res_{s=1} \Mt_4(\Psi \mid \mathbbm{1},s) ,\ \ \Res_{s=0} \Mt_4(\Psi \mid \mathbbm{1},s)
	\end{align}
in \eqref{DGF}, so that \eqref{eq:DegenerateTermBd-D4} can be deduced readily.
Now we are done!

\appendix

\section{Remarks on Period Approach to Motohashi's Formula}
\label{sec:appendix}
	
	\subsection{Recall on period approach}
	
	Different methods have been exploited by various authors in order to understand the structural reason under Motohashi's formula, see \cite{BrM05, BHKM20} for example. In \cite[\S 4.3.3]{MV06} and \cite[\S 4.5.3]{MV10} Michel and Venkatesh sketched a period approach, which suggests explaining Motohashi's formula as a special case of the \emph{strong Gelfand configurations}, proposed by Reznikov \cite{Re08}, as follows
\begin{equation}
\begin{matrix}
	& & \GL_2 \times \GL_2 & & \\
	& \nearrow & & \nwarrow & \\
	\GL_1 \times \GL_1 & & & & \GL_2 \\
	& \nwarrow & & \nearrow & \\
	& & \GL_1 & &
\end{matrix}.
\label{MVGraph}
\end{equation}

	We illustrate the details in the case relevant to this paper. We consider the regularized integral 
\begin{equation} 
	\int_{\bF^{\times} \backslash \A^{\times}}^{\reg} \eis(s_1,f_1) \cdot \eis(s_2,f_2) (a(t)) \ud^{\times}t 
\label{RegIntPerMF}
\end{equation}
	along the diagonal torus of the product of two Eisenstein series constructed from smooth vectors $f_1,f_2 \in V_{\mathbbm{1}, 0} = \Ind_{\gp{B}(\A)}^{\GL_2(\A)} \id$. On one hand, one expects a suitable automorphic Fourier inversion formula for this product, so that the projection on $V_{\pi}$ for a cuspidal representation $\pi$ gives the contribution
\begin{equation} 
	\sum_{\varphi \in \Bas(\pi)} \Pairing{\eis(s_1,f_1) \cdot \eis(s_2,f_2)}{\varphi} \int_{\bF^{\times} \backslash \A^{\times}} \varphi(a(t)) \ud^{\times}t. 
\label{PerProj}
\end{equation}
	By the Rankin--Selberg theory for $\GL_2 \times \GL_1$, the above integral represents $L(1/2, \pi)$. By the Rankin--Selberg theory for $\GL_2 \times \GL_2$, the above inner product $L(1/2+s_1,\pi) L(1/2+s_2, \pi)$. Hence (\ref{PerProj}) represents a certain cubic moment of $\GL_2$ $L$-functions. On the other hand, one expects a Parseval-type identity over $\bF^{\times} \backslash \A^{\times}$, which expresses (\ref{RegIntPerMF}) as
\begin{multline}
\sum_{\chi \in \widehat{\R_+ \bF^{\times} \backslash \A^{\times}}}\int_{-\infty}^{\infty}\Big( \int_{\bF^{\times} \backslash \A^{\times}}^{\reg} \eis(s_1,f_1)(a(t)) \chi(t) \norm[t]_{\A}^{i\tau} \ud^{\times}t \Big) \cdot \Big( \int_{\bF^{\times} \backslash \A^{\times}}^{\reg} \eis(s_2,f_2)(a(t)) \chi^{-1}(t) \norm[t]_{\A}^{-i\tau} \ud^{\times}t \Big) \frac{\ud\tau}{2\pi}.
\label{PerParseval}
\end{multline}
	Again by the Rankin--Selberg theory for $\GL_2 \times \GL_1$, the two inner integrals represent $L(i\tau+s_1, \chi) L(i\tau-s_1, \chi)$ and $L(-i\tau+s_2, \chi^{-1}) L(-i\tau-s_2, \chi^{-1})$, respectively. Hence (\ref{PerParseval}) represents a certain fourth moment of $\GL_1$ $L$-functions.
	
	Michel and Venkatesh noticed the non-trivial convergence issues in the above sketch, but did not provide any hint of solution. Recently, Nelson \cite{Ne20} announced a solution to these issues by the theory of regularized integrals, which is favourable to the application in his paper. We have initiated a comparison between Nelson's period method and the first author's distributional method in \cite[Appendix]{Wu22}, and found some possible disagreement of the two versions mainly on the cubic moment side. We shall refine the comparison here, and clarify the non-trivial gap between Nelson's version and ours.

	\subsection{Regularized integrals v.s. meromorphic continuations}
	The theory of regularized integrals has been playing important roles when establishing meromorphic continuations, for instance in Tate's thesis and theta correspondences. This fruitful theory turns out to be very powerful, but we believe that it can only cover the scope of methods of meromorphic continuations for a very special class of functions, say Mellin transforms as shown below. We now explain this viewpoint over $\R_{>0}$, in the framework of Nelson's version \cite[\S 5]{Ne20}.

	Recall that a finite function $\phi$ on $\R_{>0}$ is so defined that the translates $\varphi_a(t) := \phi(at)$ for $a \in \R_{>0}$ span a finite dimensional space. Concretely, a finite function is a linear combination of functions of the form
	$$ t^{\alpha} (\log t)^n $$
for $\alpha \in \R$ and $n \in \Z_{\geq 0}$.

	A (strongly) regularizable function $f: \R_{>0} \to \C$ is so defined that there exist two finite functions $\phi_0$ and $\phi_{\infty}$ satisfying
\begin{itemize}	
	\item $f(t) - \phi_0(t) = O(t^A)$ for any $A > 1$ as $t \to 0^+$;
	\item $f(t) - \phi_{\infty}(t) = O(t^{-A})$ for any $A > 1$ as $t \to +\infty$.
\end{itemize}
	For such a function, the Mellin transform
	$$ I(s) = \int_0^{\infty} (f(t) - \phi_\infty(t)) t^s \frac{\ud t}{t}, \quad \Re s \gg 1 $$
	admits a meromorphic continuation to $s \in \C$. If $I(s)$ is regular at $s=0$\footnote{The regularity at $0$ can be removed to give an extension useful for certain applications. See \cite[\S 2]{Wu19_TF}.},
	the regularized integral of $f$ is defined to be
	$$ \int_{\R_{>0}}^{\reg} f(t) \frac{\ud t}{t} := I(0). $$

	Some important examples of regularizable functions are constructed from regularizable functions on $\GL_2(\bF) \backslash \GL_2(\A)$, whose definition is cumbersome to recall. But they are modelled by products of Eisenstein series. For example, the function $\varphi(s_1,s_2) := \eis(s_1,f_1) \cdot \eis(s_2,f_2)$ in the integrand of (\ref{RegIntPerMF}) is regularizable on $\GL_2(\bF) \backslash \GL_2(\A)$, with the \emph{essential constant term} given by
	$$ \varphi(s_1,s_2)_{\gp{N}}^* = \left( f_1(s_1) + \Intw f_1(s_1) \right) \cdot \left( f_2(s_2) + \Intw f_2(s_2) \right), $$
	where $\Intw$ is the intertwining operator on (the induced model of) the principal series representations. The corresponding function $f$ on $\R_{>0}$ defined by
\begin{equation} \label{eq:AuxRegFR+}
	f(t) := \int_{\bF^{\times} \backslash \A^{(1)}} \varphi(s_1,s_2)(a(ty)) \ud^{\times} y 
\end{equation}
	is regularizable. The associated finite functions are
	$$ \phi_{\infty}(t) := \int_{\bF^{\times} \backslash \A^{(1)}} \varphi(s_1,s_2)_{\gp{N}}^*(a(ty)) \ud^{\times} y, \quad \phi_0(t) := \int_{\bF^{\times} \backslash \A^{(1)}} \varphi(s_1,s_2)_{\gp{N}}^*(a(ty)w) \ud^{\times} y. red{\text{what is $w$}}$$
	
	Consider the classical Bessel function $J_{\nu}(t)$, say with $\nu \in i\R$, which has the following formula \cite[10.22.43]{OLBC10} (Weber's formula, after Schafheitlin)
	$$ \int_0^{\infty} t^s J_{\nu}(t) \frac{\ud t}{t} = \frac{1}{\sqrt{2}} \frac{\Gamma( \frac{s+\nu}{2})}{\Gamma( \frac{2-s+\nu}{2} )}, \quad 0 < \Re(s) < \frac{3}{2}. $$
	Note that the above integral is conditionally convergent in the sense of Cauchy, and is absolutely convergent if $0 < \Re(s) <\frac{1}{2}$. Consider $f(t) := t J_{\nu}(t)$. It is not a regularizable function\footnote{It is not regularizable even taking the extension by the first author into account.}, because its asymptotic main term at the infinity
	$$ J_{\nu}(t) = \sqrt{\frac{2}{\pi t}} \cos \left( t - \frac{\nu \pi}{2} - \frac{\pi}{4} \right) + O(t^{-1}) $$
	is not a finite function on $\R_{>0}$. Nevertheless, the Mellin transform
	$$ I(s) := \int_0^{\infty} f(t) t^s \frac{\ud t}{t}, \quad -1 < \Re s < - \tfrac{1}{2} $$
	does admit a meromorphic continuation to $s \in \C$ with $I(0) = 1/\sqrt{2}$.

	\subsection{Remarks on period approach}

	The regularizable function $\varphi(s_1,s_2) := \eis(s_1,f_1) \cdot \eis(s_2,f_2)$ is never integrable along $\bF^{\times} \backslash \A^{\times}$ for any $s_1,s_2$. The regularized integral (\ref{RegIntPerMF}) is by definition the analytically continued value at $s_0=0$ of
\begin{equation}
	I(s_0,s_1,s_2) := \int_{\bF^{\times} \backslash \A^{\times}} \left( \varphi(s_1,s_2) - \varphi(s_1,s_2)_{\gp{N}}^* \right)(a(t)) \norm[t]_{\A}^{s_0} \ud^{\times}t, \quad \Re s_0 \gg 1,
\label{RegIntPerMFDef}
\end{equation}
	The function $\varphi(s_1,s_2)$ is never square integrable, either. One brings the square integrability by subtracting it by some linear combination of Eisenstein series $\Reis(s_1,s_2)$, and regroups the integrand of (\ref{RegIntPerMFDef}) as (omitting the parameters $s_1,s_2$ for simplicity)
\begin{equation}
	\varphi - \varphi_{\gp{N}}^* = \left\{ (\varphi-\Reis) - (\varphi-\Reis)_{\gp{N}} \right\} + \left\{ \Reis - \Reis_{\gp{N}} \right\} + \left\{ \varphi_{\gp{N}} - \varphi_{\gp{N}}^* \right\}.
\label{ReGp}
\end{equation}
	The precise construction of $\Reis(s_1,s_2)$ depends on the region of the parameters $(s_1,s_2)$. For example, Nelson works on the region 
\begin{equation} \label{eq:AuxParReg1}
	D_1 = \left\{ (s_1,s_2) \in \C^2 \ \middle| \ \norm[s_1], \norm[s_2] < \epsilon \right\}
\end{equation}	
	 near the origin point, for which $\Reis(s_1,s_2)$ is the sum of $4$ Eisenstein series constructed from all the $4$ holomorphic sections contained in $\varphi_{\gp{N}}^*$:
	 $$ f_1(s_1) \cdot f_2(s_2), \quad f_1(s_1) \cdot \Intw f_2(s_2), \quad \Intw f_1(s_1) \cdot f_2(s_2), \quad \Intw f_1(s_1) \cdot \Intw f_2(s_2). $$
	 The first author works on another region
\begin{equation} \label{eq:AuxParReg2}
	D_2 = \left\{ (s_1,s_2) \in \C^2 \ \middle| \ \Re s_1, \Re s_2 - \Re s_1 > 1/2 \right\},
\end{equation}	
	for which $\Reis(s_1,s_2)$ is the sum of $4$ Eisenstein series constructed from
	$$ f_1(s_1) \cdot f_2(s_2), \quad \Intw f_1(s_1) \cdot f_2(s_2). $$

	Now that $\varphi - \Reis$ is square integrable by construction, the contribution of the first term in (\ref{ReGp}) to (\ref{RegIntPerMFDef}) gives the main distribution
\begin{equation} \label{eq:MPerM}
	I^M(s_0,s_1,s_2) := \int_{\bF^{\times} \backslash \A^{\times}} \left\{ (\varphi-\Reis) - (\varphi-\Reis)_{\gp{N}} \right\}(a(y)) \norm[y]_{\A}^{s_0} \ud^{\times}y 
\end{equation}	
	which represents a certain cubic moment by applying a spectral decomposition of $\varphi - \Reis$. It contains (\ref{PerProj}) in part. The verification that the resulted expressions of $I(s_0,s_1,s_2)$ in different regions of $(s_1,s_2)$ are meromorphic continuations of each other would require some equation similar to Tate's fundamental equation (\ref{TateIntG}). Note that in (\ref{TateIntG}) the first line (resp. last line) with one term is valid/absolutely convergent for $\Re s > 1$ (resp. $\Re s < 0$), while the two lines in the middle with four terms holds for all $s \in \C$. This should be a theoretic explanation of the discrepancy on the number of degenerate terms noticed in \cite[Footnote 2]{Wu22}, although the details might be non-trivial in practice. Already, one gets the feeling that $I(s_0,s_1,s_2)$ may look simpler in some region of parameters than in others.
	
	The \emph{real challenge in the period approach} is the meromorphic continuation of the contribution to (\ref{RegIntPerMFDef}) of the third term in (\ref{ReGp}), namely
\begin{equation} \label{eq:ChDegPerM}
	I^{\sharp}(s_0,s_1,s_2) := \int_{\bF^{\times} \backslash \A^{\times}} \left( \varphi(s_1,s_2)_{\gp{N}} - \varphi(s_1,s_2)_{\gp{N}}^* \right)(a(t)) \norm[t]_{\A}^{s_0} \ud^{\times}t.
\end{equation}
\begin{remark}
	Note that $\varphi(s_1,s_2)_{\gp{N}}$, not $\varphi(s_1,s_2)$, appears above. Hence its meromorphic continuation goes beyond any naive application of the theory of regularized integrals. In fact, the corresponding function on $\R_{>0}$, constructed similarly to (\ref{eq:AuxRegFR+}), seems to have an oscillatory asymptotic behavior at $0$ similar to the one of $J_{\nu}(t)$ at $\infty$. Note also that it is independent of $\Reis$, hence can not be avoided by changing the region of parameters $(s_1,s_2)$. 
\end{remark}	
	
	Nelson claims a partial solution by imposing some conditions, call them \textrm{NVC} (Nelson's vanishing conditions), on the test functions $f_1,f_2$, so that $I^{\sharp}(s_0,s_1,s_2)$ vanishes identically for all $(s_1,s_2)$. The first author provides in \cite[\S 7 Appendix]{Wu22} a complete solution by relating (\ref{RegIntPerMFDef}) with the distributional version via the construction of Eisenstein series via Godement sections, so that every degenerate term can be identified with some residue of the component $I^M(s_0,s_1,s_2 \mid \id,s)$ of the main term corresponding to the continuous spectra $\pi(\id,s) = \pi(\norm_{\A}^s, \norm_{\A}^{-s})$. Since $I^M(s_0,s_1,s_2 \mid \id,s)$ has meromorphic continuation to $(s_0,s_1,s_2,s) \in \C^4$ given explicitly in terms of the Godement--Jacquet and the Rankin--Selberg zeta integrals, the desired meromorphic continuation of each degenerate term follows. A simple computation shows that $I^{\sharp}(\cdots)$ corresponds to
	$$ DS_4(s_0,s_1,s_2) := \int_{(\A^{\times})^3} \OFour_2 \OFour_4 \Psi \left( \begin{pmatrix} -t_1^{-1}t_2t & t_1t^{-1} \\ t_2t & t^{-1} \end{pmatrix} \right) \norm[t]_{\A}^{s_0} \norm[t_1]_{\A}^{s_1} \norm[t_2]_{\A}^{s_2} \ud^{\times}t \ud^{\times}t_1 \ud^{\times}t_2 $$
	in the distributional version (see \cite[\S 1.5.4, (4.7) \& Corollary 4.10 (3)]{Wu22}). Its vanishing looks exotic, and does not seem to be satisfied by the test functions used in \cite{BFW21+}.
	
\begin{remark}
	There is another degenerate term in the first author's version, namely $DS_0$ given in \cite[\S 1.5.4, (4.5) \& Corollary 4.10 (1)]{Wu22}, which is supported in the complement of $\GL_2(\A)$ in $\Mat_2(\A)$. It does vanish identically for reasonable test functions such as those used in \cite{BFW21+}.
\end{remark}

	As a conclusion, it seems difficult to regard the theory of regularized integrals, say in its current form, as an adequate tool for a complete version of Motohashi's formula without NVC. On the other hand, the relation between degenerate terms and main terms is independent of the version of the formula. For the good of the development of the period method, it is an interesting question to study the meromorphic continuation of $I^{\sharp}(s_0,s_1,s_2)$ without appealing to Godement sections, so that one may understand the residues of $s \mapsto I^M(s_0,s_1,s_2 \mid \id,s)$ from a different perspective, hopefully more convenient for generalizations.

\bibliographystyle{acm}
	
\bibliography{PGL2bib}
	
\end{document}